\newtheorem{teor}{Theorem}[section]
\newtheorem{defi}{Definition}
\newtheorem{lemma}[teor]{Lemma}
\newtheorem{prop}[teor]{Proposition}
\newtheorem{cor}[teor]{Corollary}
\newtheorem{rem}[teor]{Remark}
\newtheorem{rems}[teor]{Remarks}
\newtheorem{exem}[teor]{Example}
\newtheorem{exems}[teor]{Examples}
\newtheorem{ques}[teor]{Question}
\newcommand{\pullbackcorner}[1][dr]{\save*!/#1+1.2pc/#1:(1,-1)@^{*}\restore}
\newcommand{\pushoutcorner}[1][dr]{\save*!/#1-1.2pc/#1:(-1,1)@^{*}\restore}
\title{Direct limits in the heart of a t-structure: the case of a torsion pair}
\author{Carlos E. Parra  \thanks{The authors thank Silvana Bazzoni and Jan Trlifaj for their quick answer to our questions, and
Riccardo Colpi for  sending us the manuscript \cite{CG}.}
\\
Departamento de Matem\'aticas\\ Universidad de los Andes \\ ({\bf 5101}) M\'erida\\ VENEZUELA\\
{\it carlosparra@ula.ve} \\  \\ Manuel Saor\'in \footnotemark[1]
\\ Departamento de Matem\'aticas\\
Universidad de Murcia, Aptdo. 4021\\
30100 Espinardo, Murcia\\
SPAIN\\ {\it msaorinc@um.es} }
\begin{document}
\date{}
\maketitle

\footnote{Parra is supported by a grant from the Universidad de los Andes (Venezuela) and Saor\'in is
supported by research projects from the Spanish Ministry of
Education (MTM2010-20940-C02-02) and from the Fundaci\'on 'S\'eneca'
of Murcia (04555/GERM/06), with a part of FEDER funds. The authors
thank these institutions for their help.}

\begin{abstract}
{\bf We study the behavior of direct limits in the heart of a
t-structure. We  prove that, for any compactly generated t-structure
in a triangulated category with  coproducts, countable direct
limits are exact in its heart. Then, for a given Grothendieck
category $\mathcal{G}$ and a torsion pair
$\mathbf{t}=(\mathcal{T},\mathcal{F})$ in $\mathcal{G}$, we show
that  the heart $\mathcal{H}_\mathbf{t}$ of the associated t-structure in the derived
category $\mathcal{D}(\mathcal{G})$ is  AB5 if, and only if, it is a Grothendieck category. If this is the case, then $\mathcal{F}$ is
closed under taking direct limits. The reverse implication is true for a wide
class of torsion pairs which include the hereditary ones, those for
which $\mathcal{T}$ is a cogenerating class and those for which
$\mathcal{F}$ is a generating class. The results allow to extend
results by Buan-Krause and Colpi-Gregorio to the general context of
Grothendieck categories and to improve some results of (co)tilting
theory of modules.}
\end{abstract}

{\bf Mathematics Subjects Classification:} 18E30, 18E15, 18E40,
16E05, 16E30.

\section{Introduction}
Beilinson, Bernstein and Deligne \cite{BBD} introduced the notion of
a t-structure in a triangulated category in their study of perverse
sheaves on an algebraic or analytic variety. If $\mathcal{D}$ is
such a triangulated category, a t-structure is a pair of full
subcategories satisfying suitable axioms (see the precise definition
in next section) which guarantee that their intersection is an
abelian category $\mathcal{H}$, called the heart of the t-structure.
This category  comes with a cohomological functor
$\mathcal{D}\longrightarrow\mathcal{H}$. Roughly speaking, a
t-structure allows to develop an intrinsic (co)homology theory,
where the homology 'spaces' are again objects of $\mathcal{D}$
itself.

Since their introduction t-structures have been used in many
branches of Mathematics, with special impact in Algebraic Geometry
and Representation Theory of Algebras. One line of research in the
topic has been the explicit construction, for concrete triangulated
categories, of wide classes of t-structures. This approach has led
to classification results in many cases  (see, e.g., \cite{Br},
\cite{GKR}, \cite{AJS}, \cite{SS}, \cite{KN}...). A second line of
research consists in starting with a well-behaved class of
t-structures and trying to find necessary and sufficient conditions on
a t-structure in the class so that the heart is a 'nice' abelian
category. For instance, that the heart is a Grothendieck or even a
module category. All the work in this direction which we know of has
concentrated on a particular class of t-structures. Namely, in the
context of bounded derived categories, Happel, Reiten and
Smal$\emptyset$ \cite{HRS} associated to each torsion pair  in an
abelian category $\mathcal{A}$, a t-structure in the bounded derived
category $\mathcal{D}^b(\mathcal{A})$. This t-structure is actually
the restriction of a t-structure
 in $\mathcal{D}(\mathcal{A})$
and several authors (see \cite{CGM}, \cite{CMT}, \cite{MT},
\cite{CG}) have dealt with the problem of deciding when its heart
$\mathcal{H}_\mathbf{t}$ is a Grothendieck or module category, in
case $\mathcal{A}$ is the module category $R-\text{Mod}$, for some
(always associative unital) ring $R$. Concretely, in \cite{CGM} the
authors proved that if $\mathbf{t}$ is faithful in $R-\text{Mod}$,
with $\mathcal{F}$
 closed under taking direct limits,  and $\mathcal{H}_\mathbf{t}$ is
a Grothendieck category, then $\mathbf{t}$ is a cotilting torsion
pair. Later, in \cite{CG} (see also \cite{M}), it was proved that
the converse is also true. The study of the case when
$\mathcal{H}_\mathbf{t}$ is a module category was also initiated in
\cite{CGM} and continued in \cite{CMT}, where the authors gave
necessary and sufficient conditions, when $\mathbf{t}$ is faithful,
for $\mathcal{H}_\mathbf{t}$ to be a module category.

It is clear from the work on the second line of research mentioned
above that the main difficulty in understanding when
$\mathcal{H}_\mathbf{t}$ is a Grothendieck category comes from the
AB5 condition. In fact, the understanding of direct limits in
$\mathcal{H}_\mathbf{t}$ or, more generally, in the heart of any
t-structure is far for complete. The present paper is our first
attempt to understand the AB5 condition on the heart of commonly
used t-structures. We first give some general results for any
t-structure in an arbitrary triangulated category and, later, we
concentrate on the case of a torsion pair $\mathbf{t}$ in any
Grothendieck (not necessarily a module) category. In a forthcoming
paper \cite{PS}, we will study the problem in the case of a
compactly generated t-structure in the derived category
$\mathcal{D}(R)$ of a commutative Noetherian ring, using the
classification results of \cite{AJS}.

We summarize in the following list the main results of the paper,
all of which, except the first,  are given for a torsion pair
$\mathbf{t}=(\mathcal{T},\mathcal{F})$ in a Grothendieck category
$\mathcal{G}$ and the heart $\mathcal{H}_\mathbf{t}$ of the
associated t-structure in $\mathcal{D}(\mathcal{G})$. The reader is
referred to next section for the pertinent definitions.

\begin{enumerate}
\item (Theorem \ref{teor.countable direct limits and compactly
t-structure}) If $\mathcal{D}$ is a triangulated category with coproducts, then, for any compactly generated t-structure
in $\mathcal{D}$, countable direct limits are exact in the heart.
\item (Part of theorem \ref{teor.AB5 heart of a torsion pair})
 The heart $\mathcal{H}_\mathbf{t}$ is a Grothendieck  category if, and only if, it is AB5 if, and only if, the canonical morphism $\varinjlim H^{-1}(M_i)\longrightarrow
H^{-1}(\varinjlim_{\mathcal{H}_\mathbf{t}} M_i)$ is a monomorphism,
for each direct system $(M_i)_{i\in I}$ in $\mathcal{H}_\mathbf{t}$. In this case
$\mathcal{F}$ is closed under taking direct limits in $\mathcal{G}$.

\item (Part of theorem \ref{teor.Grothendieck heart2}) Suppose that
$\mathbf{t}$ satisfies one of the following conditions: i)
$\mathbf{t}$ is hereditary; ii) $\mathcal{F}$ is a generating class;
or iii) $\mathcal{T}$ is a cogenerating class. The heart
$\mathcal{H}_\mathbf{t}$ is a Grothendieck category if, and only if,
$\mathcal{F}$ is closed under taking direct limits in $\mathcal{G}$.

\item (Part of proposition \ref{prop.Grothendieck tilting hearts})
 The torsion class $\mathcal{T}$ is  cogenerating  and $\mathcal{H}_\mathbf{t}$
is a Grothendieck category with a projective generator if, and only
if, $\mathbf{t}$ is a tilting torsion pair such that $\mathcal{F}$
is closed under taking direct limits in $\mathcal{G}$.

\item (Part of proposition \ref{prop.characterization of cotilting
pairs}) When $\mathcal{G}$ is locally finitely presented and
$\mathcal{F}$ is a generating class, the class $\mathcal{F}$ is
closed under taking direct limits in $\mathcal{G}$ if, and only if,
$\mathbf{t}$ is a cotilting torsion pair.
\end{enumerate}

Let us point out that, as a consequence of our findings, some
results in the second line of research mentioned above, as well as
classical results on tilting and cotilting theory of module
categories are extended or improved to more general Grothendieck
categories. For example, Buan-Krause classification of torsion pairs
in the category of finitely generated modules over a Noetherian ring
(\cite{BK}) is extended to any locally noetherian Grothendieck
category (corollary \ref{cor.Buan-Krause}). Similarly, result 5 of
the list above is an extension of the result, essentially due to
Bazzoni, that a faithful torsion pair in $R-\text{Mod}$  is
cotilting if, and only if, its torsionfree class is closed under
direct limits.

The organization of the paper goes as follows. In section 2 we give
all the preliminaries and terminology needed in the rest of the
paper. In section 3 we study Grothendieck properties AB3, AB4, AB5
and their duals, for the heart of a t-structure in any ambient
triangulated category. In particular, we prove result 1 in the list
above.  In Section 4 we give results 2 and 3 in the list above and
their proofs. In the final section 5, we see that the results of the
previous section naturally lead to tilting and cotilting theory in a
general Grothendieck category. Results 4 and 5 in the list above are
proved in this final section, as well as their already mentioned
consequences.

\section{Preliminaries and terminology}

Recall that a category $I$ is  \emph{(skeletally) small} when its
objects form a set (up to isomorphism).  If $\mathcal{C}$ and $I$
are an arbitrary and a small category, respectively, then a functor
$I\longrightarrow\mathcal{C}$ will be called an \emph{$I$-diagram}
on $\mathcal{C}$, or simply a diagram on $\mathcal{C}$ when $I$ is
understood. The category of $I$-diagrams on $\mathcal{C}$ will be
denoted  by $[I,\mathcal{C}]$. We will frequently write an
$I$-diagram $X$ as $(X_i)_{i\in I}$, where $X_i:=X(i)$ for each
$i\in\text{Ob}(I)$, whenever the images by $X$ of the arrows in $I$
are understood.
 When each $I$-diagram has a limit (resp. colimit),
we say that \emph{$\mathcal{C}$ has $I$-limits (resp.
$I$-colimits)}. In such case,
$\text{lim}:[I,\mathcal{C}]\longrightarrow\mathcal{C}$ (resp.
$\text{colim}:[I,\mathcal{C}]\longrightarrow\mathcal{C}$) will
denote the \emph{(I-)limit (resp. ($I$-)colimit) functor} and it is
right (resp. left) adjoint to the constant diagram functor $\kappa
:\mathcal{C}\longrightarrow [I,\mathcal{C}]$. If $\mathcal{C}$ and
$\mathcal{D}$ are categories which have $I$-limits (resp.
$I$-colimits), we will say that a functor
$F:\mathcal{C}\longrightarrow\mathcal{D}$  preserves $I$-limits
(resp. $I$-colimits) when the induced morphism
$F(\text{lim}X_i)\longrightarrow\text{lim}F(X_i)$ (resp.
$\text{colim}F(X_i)\longrightarrow F(\text{colim}X_i)$) is an
isomorphism, for each $I$-diagram $(X_i)_{i\in I}$. The category
$\mathcal{C}$ is called \emph{complete (resp. cocomplete)} when
$I$-limits (resp. $I$-colimits) exist in $\mathcal{C}$, for any
small category $I$.

Recall that a  particular case of limit functor (resp. colimit
functor) is the \emph{($I$-)product functor} $\prod
:[I,\mathcal{C}]\longrightarrow\mathcal{C}$ (resp.
\emph{($I$-)coproduct functor}
$\coprod:[I,\mathcal{C}]\longrightarrow\mathcal{C}$), when $I$ is
just a set, viewed as a small category on which the identities are
its only morphisms. Another particular case comes when  $I$ is a
directed set,  viewed as a small category on which there is a unique
morphism $i\longrightarrow j$ exactly when $i\leq j$. The
corresponding colimit functor is the \emph{$I$-direct limit functor}
$\varinjlim :[I,\mathcal{C}]\longrightarrow\mathcal{C}$. The
$I$-diagrams on $\mathcal{C}$ are usually called \emph{$I$-directed
systems} in $\mathcal{C}$.  Dually, one has  \emph{$I$-inverse
systems} and the \emph{($I$-)inverse limit functor} $\varprojlim
:[I^{op},\mathcal{C}]\longrightarrow\mathcal{C}$.

When
 $\mathcal{A}$ is an additive category and $\mathcal{S}\subset\text{Ob}(\mathcal{A})$ is any class of objects, we shall
denote by $\text{add}_\mathcal{A}(\mathcal{S})$ (resp.
$\text{Add}_\mathcal{A}(\mathcal{S})$), or simply
$\text{add}(\mathcal{S})$ (resp. $\text{Add}(\mathcal{S})$) if no
confusion appears, the class of objects which are direct summands of
finite (resp. arbitrary) coproducts of objects in $\mathcal{S}$.
Also, we will denote by $\text{Prod}_\mathcal{A}(\mathcal{S})$ or
$\text{Prod}(\mathcal{S})$ the class of objects which are direct
summands of arbitrary products of objects in $\mathcal{S}$. When
$\mathcal{S}=\{V\}$, for some object $V$, we will simply write
$\text{add}_\mathcal{A}(V)$ (resp. $\text{Add}_\mathcal{A}(V)$) or
$\text{add}(V)$ (resp. $\text{Add}(V)$) and
$\text{Prod}_\mathcal{A}(V)$ or $\text{Prod}(V)$.  If $\mathcal{S}$
is any set of objects, we will say that it is a \emph{set of
generators} when the functor
$\coprod_{S\in\mathcal{S}}\text{Hom}_\mathcal{A}(S,?):\mathcal{A}\longrightarrow
\text{Ab}$ is a faithful functor. An object $G$ is a
\emph{generator} of $\mathcal{A}$, when $\{G\}$ is a set of
generators. We will employ a stronger version of these concepts, for
a class $\mathcal{R}\subseteq\text{Ob}(\mathcal{A})$. The class
$\mathcal{R}$ will be called a \emph{generating (resp. cogenerating)
class} of $\mathcal{A}$ when, for  each object $X$ of $\mathcal{G}$,
there is  an epimorphism $R\twoheadrightarrow X$ (resp. monomorphism
$X\rightarrowtail R$), for some $R\in\mathcal{R}$.   When, in
addition,
 $\mathcal{A}$ has  coproducts, we shall say that an object $X$ is a
\emph{compact object} when the functor
$\text{Hom}_\mathcal{A}(X,?):\mathcal{A}\longrightarrow\text{Ab}$
preserves coproducts.

Recall  the following 'hierarchy' among abelian categories
introduced by Grothendieck (\cite{Gr}). Let $\mathcal{A}$ be an
abelian category.

\begin{enumerate}
\item[] - $\mathcal{A}$ is \emph{AB3 (resp. AB3*)} when it has
coproducts (resp. products);
\item[] - $\mathcal{A}$ is \emph{AB4 (resp. AB4*)} when it is AB3 (resp. AB3*)
and the coproduct functor $\coprod
:[I,\mathcal{A}]\longrightarrow\mathcal{A}$ (resp. product functor
$\prod :[I,\mathcal{A}]\longrightarrow\mathcal{A}$) is  exact, for
each set $I$;
\item[] - $\mathcal{A}$ is \emph{AB5 (resp. AB5*)} when it is AB3 (resp. AB3*) and the direct limit
functor $\varinjlim :[I,\mathcal{A}]\longrightarrow\mathcal{A}$
(resp. inverse limit functor $\varprojlim
:[I^{op},\mathcal{A}]\longrightarrow\mathcal{A}$) is exact, for each
 directed set $I$.
\end{enumerate}
Note that the AB3 (resp. AB3*) condition is equivalent to the fact
that $\mathcal{A}$ be cocomplete (resp. complete). Note that if
$\mathcal{A}$ is AB3 (resp. AB3*) then,  for each small category
$I$, the corresponding limit (resp. colimit) functor is left (resp.
right) exact, because  it is a right (resp. left) adjoint functor.

An AB5 abelian category $\mathcal{G}$ having a set of generators
(equivalently, a generator),  is called a \emph{Grothendieck
category}. Such a category always has enough injectives,  and even
every object in it has an injective envelope (see \cite{G}).
Moreover, it is always a complete (and cocomplete) category (see
\cite[Corollary X.4.4]{S}). Given an object $V$ of $\mathcal{G}$,
another object $X$ is called \emph{$V$-generated (resp.
$V$-presented)} when there is an epimorphism
$V^{(I)}\twoheadrightarrow X$ (resp. an exact sequence
$V^{(J)}\longrightarrow V^{(I)}\longrightarrow X\rightarrow 0$), for
some sets $I$ and $J$. We will say that $X$ is \emph{$V$-cogenerated
(resp. $V$-copresented)} when  there is a monomorphism
$X\rightarrowtail V^I$ (resp. an exact sequence $0\rightarrow
X\longrightarrow V^I\longrightarrow V^J$), for some sets $I$ and
$J$. As it is customary, we will denote by $\text{Gen}(V)$,
$\text{Pres}(V)$, $\text{Cogen}(V)$ and $\text{Copres}(V)$ the
classes of $V$-generated, $V$-presented, $V$-cogenerated and
$V$-copresented objects, respectively. An object $X$ of
$\mathcal{G}$ is called \emph{finitely presented} when
$\text{Hom}_\mathcal{G}(X,?):\mathcal{G}\longrightarrow\text{Ab}$
preserves direct limits. When $\mathcal{G}$ has a set of finitely
presented generators and each object of $\mathcal{G}$ is a direct
limit of finitely presented objects, we say that $\mathcal{G}$ is
\emph{locally finitely presented}.

A \emph{torsion pair} in an abelian category $\mathcal{A}$ is a pair
$\mathbf{t}=(\mathcal{T},\mathcal{F})$ of full subcategories
satisfying the following two conditions:

\begin{enumerate}
\item[] - $\text{Hom}_\mathcal{A}(T,F)=0$, for all $T\in\mathcal{T}$
and $F\in\mathcal{F}$;
\item[] - For each object $X$ of $\mathcal{A}$ there is an exact
sequence $0\rightarrow T_X\longrightarrow X\longrightarrow
F_X\rightarrow 0$, where $T_X\in\mathcal{T}$ and
$F_X\in\mathcal{F}$.
\end{enumerate}
In such case the objects $T_X$ and $F_X$ are uniquely determined, up
to isomorphism, and the assignment $X\rightsquigarrow T_X$ (resp.
$X\rightsquigarrow F_X$) underlies a functor
$t:\mathcal{A}\longrightarrow\mathcal{T}$ (resp.
$(1:t):\mathcal{A}\longrightarrow\mathcal{F}$) which is right (resp.
left) adjoint to the inclusion functor
$\mathcal{T}\hookrightarrow\mathcal{A}$ (resp.
$\mathcal{F}\hookrightarrow\mathcal{A}$). We will frequently write
$X/t(X)$ to denote $(1:t)(X)$. The torsion pair $\mathbf{t}$ is
called \emph{hereditary} when $\mathcal{T}$ is closed under taking
subobjects in $\mathcal{A}$. Slightly modifying \cite[Definitions
2.3 and 2.6]{C}, when $\mathcal{A}$ is AB3 (resp. AB3*), an object $V$
(resp. $Q$) of $\mathcal{A}$ is called \emph{1-tilting} (resp.
\emph{$1$-cotilting}) when
$\text{Gen}(V)=\text{Ker}(\text{Ext}_\mathcal{A}^1(V,?))$ (resp.
$\text{Cogen}(Q)=\text{Ker}(\text{Ext}_\mathcal{A}^1(?,Q))$). In
that case, one has $\text{Gen}(V)=\text{Pres}(V)$ (resp.
$\text{Cogen}(Q)=\text{Copres}(Q)$) and
$(\text{Gen}(V),\text{Ker}(Hom_\mathcal{A}(V,?)))$ (resp.
$(\text{Ker}(Hom_\mathcal{A}(?,Q),\text{Cogen}(Q))$) is a torsion
pair in $\mathcal{A}$ called the \emph{tilting (resp. cotilting)
torsion pair} associated to $V$ (resp. $Q$).

We refer the reader to \cite{N} for the precise definition of
\emph{triangulated category}, but, diverting from the terminology in
that book, for a given triangulated category $\mathcal{D}$, we will
denote by $?[1]:\mathcal{D}\longrightarrow\mathcal{D}$ its
suspension functor. We will then put $?[0]=1_\mathcal{D}$ and $?[k]$
will denote the $k$-th power of $?[1]$, for each integer $k$.
(Distinguished) triangles in $\mathcal{D}$ will be denoted
$X\longrightarrow Y\longrightarrow Z\stackrel{+}{\longrightarrow}$,
or also $X\longrightarrow Y\longrightarrow
Z\stackrel{w}{\longrightarrow}X[1]$ when the connected morphism $w$
need be emphasized. A \emph{triangulated functor} between
triangulated categories is one which preserves triangles. Unlike the
terminology used in the abstract setting of additive categories, in
the context of triangulated categories a weaker version of the term
'set of generators' is commonly used.  Namely, a set
$\mathcal{S}\subset\text{Ob}(\mathcal{D})$ is called a \emph{set of
generators of $\mathcal{D}$} if an object $X$ of $\mathcal{D}$ is
zero whenever $\text{Hom}_\mathcal{D}(S[k],X)=0$, for all
$S\in\mathcal{S}$ and $k\in\mathbb{Z}$. In case $\mathcal{D}$  has
coproducts, we will say that $\mathcal{D}$ is \emph{compactly
generated} when it has a set of compact generators.

Recall that if $\mathcal{D}$ and $\mathcal{A}$ are a triangulated
and an abelian category, respectively, then an additive  functor
$H:\mathcal{D}\longrightarrow\mathcal{A}$ is a \emph{cohomological
functor} when, given any triangle $X\longrightarrow Y\longrightarrow
Z\stackrel{+}{\longrightarrow}$, one gets an induced long exact
sequence in $\mathcal{A}$:

\begin{center}
$\cdots \longrightarrow H^{n-1}(Z)\longrightarrow H^n(X)\longrightarrow
H^n(Y)\longrightarrow H^n(Z)\longrightarrow
H^{n+1}(X)\longrightarrow \cdots$,
\end{center}
where $H^n:=H\circ (?[n])$, for each $n\in\mathbb{Z}$.

Given a Grothendieck category $\mathcal{G}$, we will denote by
$\mathcal{C}(\mathcal{G})$, $\mathcal{K}(\mathcal{G})$ and
$\mathcal{D}(\mathcal{G})$ the category of chain complexes of
objects of $\mathcal{G}$, the homotopy category of $\mathcal{G}$ and
the derived category of $\mathcal{G}$, respectively (see \cite{V},
\cite{Ke2}).

Let $(\mathcal{D},?[1])$ be a triangulated category. A
\emph{t-structure} in $\mathcal{D}$ is a pair
$(\mathcal{U},\mathcal{W})$ of full subcategories, closed under
taking direct summands in $\mathcal{D}$,  which satisfy the
 following  properties:

\begin{enumerate}
\item[i)] $\text{Hom}_\mathcal{D}(U,W[-1])=0$, for all
$U\in\mathcal{U}$ and $W\in\mathcal{W}$;
\item[ii)] $\mathcal{U}[1]\subseteq\mathcal{U}$;
\item[iii)] For each $X\in Ob(\mathcal{D})$, there is a triangle $U\longrightarrow X\longrightarrow
V\stackrel{+}{\longrightarrow}$ in $\mathcal{D}$, where
$U\in\mathcal{U}$ and $V\in\mathcal{W}[-1]$.
\end{enumerate}
It is easy to see that in such case $\mathcal{W}=\mathcal{U}^\perp
[1]$ and $\mathcal{U}={}^\perp (\mathcal{W}[-1])={}^\perp
(\mathcal{U}^\perp )$. For this reason, we will write a t-structure
as $(\mathcal{U},\mathcal{U}^\perp [1])$. We will call $\mathcal{U}$
and $\mathcal{U}^\perp$ the \emph{aisle} and the \emph{co-aisle} of
the t-structure, respectively. The objects $U$ and $V$ in the above
triangle are uniquely determined by $X$, up to isomorphism, and
define functors
$\tau_\mathcal{U}:\mathcal{D}\longrightarrow\mathcal{U}$ and
$\tau^{\mathcal{U}^\perp}:\mathcal{D}\longrightarrow\mathcal{U}^\perp$
which are right and left adjoints to the respective inclusion
functors. We call them the \emph{left and right truncation functors}
with respect to the given t-structure. Note that
$(\mathcal{U}[k],\mathcal{U}^\perp [k])$ is also a t-structure in
$\mathcal{D}$, for each $k\in\mathbb{Z}$. The full subcategory
$\mathcal{H}=\mathcal{U}\cap\mathcal{W}=\mathcal{U}\cap\mathcal{U}^\perp
[1]$ is called the \emph{heart} of the t-structure and it is an
abelian category, where the short exact sequences 'are' the
triangles in $\mathcal{D}$ with its three terms in $\mathcal{H}$.
Moreover, with the obvious abuse of notation,  the assignments
$X\rightsquigarrow (\tau_{\mathcal{U}}\circ\tau^{\mathcal{U}^\perp
[1]})(X)$ and $X\rightarrow (\tau^{\mathcal{U}^\perp
[1]}\circ\tau_\mathcal{U})(X)$ define   naturally isomorphic
functors $\mathcal{D}\longrightarrow\mathcal{H}$ whih are
cohomological (see \cite{BBD}).  When $\mathcal{D}$ has coproducts,
the t-structure $(\mathcal{U},\mathcal{U}^\perp [1])$ is called
\emph{compactly generated} when there is a set
$\mathcal{S}\subseteq\mathcal{U}$, consisting of compact objects,
such that $\mathcal{U}^\perp$ consists of the $Y\in\mathcal{D}$ such
that $\text{Hom}_\mathcal{D}(S[n],Y)=0$, for all $S\in\mathcal{S}$
and integers $n\geq 0$. In such case, we say that $\mathcal{S}$ is a
\emph{set of compact generators of the aisle $\mathcal{U}$}.

\begin{exems} \label{exems.two canonical ones}
The following are typical examples t-structures:

\begin{enumerate}
\item Let $\mathcal{G}$ be a Grothendieck category and, for each $k\in\mathbb{Z}$,  denote by $\mathcal{D}^{\leq
k}(\mathcal{G})$ (resp. $\mathcal{D}^{\geq k}(\mathcal{G})$) the
full subcategory of $\mathcal{D}(\mathcal{G})$ consisting of the
complexes $X$ such that $H^j(X)=0$, for all $j>k$ (resp. $j<k$). The
pair $(\mathcal{D}^{\leq
k}(\mathcal{G}),\mathcal{D}^{\geq k}(\mathcal{G}))$ is a t-structure
in $\mathcal{D}(\mathcal{G})$ whose heart is equivalent to
$\mathcal{G}$. Its left and right truncation functors will be
denoted by $\tau^{\leq
k}:\mathcal{D}(\mathcal{G})\longrightarrow\mathcal{D}^{\leq
k}(\mathcal{G})$ and
$\tau^{>k}:\mathcal{D}(\mathcal{G})\longrightarrow\mathcal{D}^{>k}(\mathcal{G}):=\mathcal{D}^{\geq
k}(\mathcal{G})[-1]$. For $k=0$, the given t-structure is known as
the \emph{canonical t-structure} in $\mathcal{D}(\mathcal{G})$.
\item (Happel-Reiten-Smal$\emptyset$) Let $\mathcal{G}$ be any
Grothendieck category and $\mathbf{t}=(\mathcal{T},\mathcal{F})$ be
a torsion pair in $\mathcal{G}$. One gets a t-structure
$(\mathcal{U}_\mathbf{t},\mathcal{U}_\mathbf{t}^{\perp}[1])=(\mathcal{U}_\mathbf{t},\mathcal{W}_\mathbf{t})$
in $\mathcal{D}(\mathcal{G})$, where:

\begin{center}
$\mathcal{U}_\mathbf{t}=\{X\in\mathcal{D}^{\leq 0}(\mathcal{G}):$
$H^0(X)\in\mathcal{T}\}$

$\mathcal{W}_\mathbf{t}=\{Y\in\mathcal{D}^{\geq -1}(\mathcal{G}):$
$H^{-1}(Y)\in\mathcal{F}\}$.
\end{center}
In this case, the heart $\mathcal{H}_\mathbf{t}$ consists of the
complexes $M$ such that $H^{-1}(M)\in\mathcal{F}$,
$H^0(M)\in\mathcal{T}$ and $H^k(M)\neq 0$, for all $k\neq -1,0$.
Each such complex is  isomorphic in $\mathcal{D}(\mathcal{G})$ to a
complex  $\cdots \longrightarrow 0\longrightarrow X\stackrel{d}{\longrightarrow}
Y\longrightarrow 0 \longrightarrow \cdots $, concentrated in degrees $-1$ and $0$, such
that $\text{Ker}(d)\in\mathcal{F}$ and
$\text{Coker}(d)\in\mathcal{T}$.

\item Let $\mathcal{D}$ be a triangulated category which has
coproducts. An object $X$ of $\mathcal{D}$ will be called a
\emph{tilting complex} when $\{X\}$ is a set of compact generators
of $\mathcal{D}$ such that $\text{Hom}_\mathcal{D}(X,X[n])=0$, for
all $n\in\mathbb{Z}\setminus\{0\}$. When $\mathcal{D}$ is the base
of a derivator (see \cite[Appendix 1]{KN}), in particular when
$\mathcal{D}=\mathcal{D}(\mathcal{G})$ for a Grothendieck category
$\mathcal{G}$, the smallest full subcategory $\mathcal{U}_X$ of
$\mathcal{D}$ which contains $X$ and is closed under coproducts,
extensions and application of the functor $?[1]$ is the  aisle of a
t-structure whose co-aisle is $\mathcal{U}_X^\perp
=\{Y\in\text{Ob}(\mathcal{D}):$
$\text{Hom}_\mathcal{D}(X[n],Y)=0\text{, for all }n\geq 0\}$ (see
also \cite{AJSo}). The corresponding heart $\mathcal{H}_X$ is
equivalent to the module category over the ring
$R:=\text{End}_{\mathcal{D}}(X)$ and the equivalence of categories
$\text{Hom}_\mathcal{D}(X,?):\mathcal{H}_X\stackrel{\cong}{\longrightarrow}R-\text{Mod}$
extends to a triangulated equivalence
$\mathcal{D}\stackrel{\cong}{\longrightarrow}{D}(R)$.
\end{enumerate}

\end{exems}

\section{Colimits in the heart of a t-structure}

In the sequel $(\mathcal{D},?[1])$ is a triangulated category. All
throughout this section, we will fix a t-structure
$(\mathcal{U},\mathcal{U}^\perp [1])$ in $\mathcal{D}$ and
$\mathcal{H}=\mathcal{U}\cap\mathcal{U}^\perp [1]$ will be its
heart. We will denote  by
$\tilde{H}:\mathcal{D}\longrightarrow\mathcal{H}$ either of the
naturally isomorphic functors
$\tau_\mathcal{U}\circ\tau^{\mathcal{U}^\perp[1]}$ or
$\tau^{\mathcal{U}^\perp [1]} \circ \tau_\mathcal{U} $, in order to avoid confusion, in case
$\mathcal{D}=\mathcal{D}(\mathcal{G})$, with the classical
cohomological functor
$H=H^0:\mathcal{D}(\mathcal{G})\longrightarrow\mathcal{G}$.

\begin{lemma} \label{lem.adjoints to the inclusions from heart}
The following assertions hold:

\begin{enumerate}
\item If $X$ is an object of $\mathcal{U}$, then $\tilde{H}(X)\cong \tau^{\mathcal{U}^\perp
}(X[-1])[1]$ and the assignment $X\rightsquigarrow\tilde{H}(X)$
defines an additive functor
$L:\mathcal{U}\longrightarrow\mathcal{H}$ which is left adjoint to
the inclusion $j:\mathcal{H}\hookrightarrow\mathcal{U}$.

\item If $Y$ is an object of $\mathcal{U}^\perp[1]$, then $\tilde{H}(Y)\cong \tau_{\mathcal{U}}(Y)$ and the assignment
$Y\rightsquigarrow\tilde{H}(Y)$ defines an additive functor
$R:\mathcal{U}^{\perp}[1]\longrightarrow\mathcal{H}$ which is right
adjoint to the inclusion
$j:\mathcal{H}\hookrightarrow\mathcal{U}^\perp [1]$.
\end{enumerate}
\end{lemma}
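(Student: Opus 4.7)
The plan is to exploit the two naturally isomorphic descriptions of $\tilde{H}$, namely $\tilde{H}\cong\tau^{\mathcal{U}^\perp[1]}\circ\tau_\mathcal{U}\cong\tau_\mathcal{U}\circ\tau^{\mathcal{U}^\perp[1]}$, together with the fact that $\tau_\mathcal{U}$ is right adjoint to $\mathcal{U}\hookrightarrow\mathcal{D}$ (stated in the preliminaries) and that $\tau^{\mathcal{U}^\perp[1]}:\mathcal{D}\longrightarrow\mathcal{U}^\perp[1]$ is left adjoint to $\mathcal{U}^\perp[1]\hookrightarrow\mathcal{D}$. This last adjunction is the shift by $[1]$ of the standard one, so $\tau^{\mathcal{U}^\perp[1]}$ can be identified with $\tau^{\mathcal{U}^\perp}(?[-1])[1]$. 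The lemma then reduces to noticing that, when restricted to $\mathcal{U}$ (respectively to $\mathcal{U}^\perp[1]$), one of the two truncations entering $\tilde{H}$ becomes the identity.

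For assertion (1), fix $X\in\mathcal{U}$. Since $\tau_\mathcal{U}(X)\cong X$, the description $\tilde{H}\cong\tau^{\mathcal{U}^\perp[1]}\circ\tau_\mathcal{U}$ gives $\tilde{H}(X)\cong\tau^{\mathcal{U}^\perp[1]}(X)\cong\tau^{\mathcal{U}^\perp}(X[-1])[1]$, which is the first claimed isomorphism. To see that this object lies in $\mathcal{H}$, not merely in $\mathcal{U}^\perp[1]$, I would shift the truncation triangle at $X[-1]$ by $[1]$ to get
\[\tau_\mathcal{U}(X[-1])[1]\longrightarrow X\longrightarrow \tau^{\mathcal{U}^\perp}(X[-1])[1]\stackrel{+}{\longrightarrow},\]
whose first two terms lie in $\mathcal{U}$ (the first by $\mathcal{U}[1]\subseteq\mathcal{U}$), so the third also does by closure of the aisle under extensions. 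The adjunction $L\dashv j$ then drops out from
\[\Hom_\mathcal{H}(L(X),M)=\Hom_\mathcal{D}(\tau^{\mathcal{U}^\perp[1]}(X),M)\cong\Hom_\mathcal{D}(X,M)=\Hom_\mathcal{U}(X,j(M))\]
for $M\in\mathcal{H}\subseteq\mathcal{U}^\perp[1]$, naturally in both variables, using the adjunction for $\tau^{\mathcal{U}^\perp[1]}$ at the middle step.

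Assertion (2) is formally dual. For $Y\in\mathcal{U}^\perp[1]$ the truncation $\tau^{\mathcal{U}^\perp[1]}(Y)\cong Y$, so $\tilde{H}\cong\tau_\mathcal{U}\circ\tau^{\mathcal{U}^\perp[1]}$ gives $\tilde{H}(Y)\cong\tau_\mathcal{U}(Y)$. That $\tau_\mathcal{U}(Y)$ lies in $\mathcal{H}$ follows by rotating the truncation triangle at $Y$ to $\tau^{\mathcal{U}^\perp}(Y)[-1]\longrightarrow\tau_\mathcal{U}(Y)\longrightarrow Y\stackrel{+}{\longrightarrow}$, whose outer terms belong to $\mathcal{U}^\perp[1]$ (using $\mathcal{U}^\perp\subseteq\mathcal{U}^\perp[1]$, itself a consequence of $\mathcal{U}[1]\subseteq\mathcal{U}$), hence $\tau_\mathcal{U}(Y)\in\mathcal{U}^\perp[1]$ by closure of the co-aisle under extensions. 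The adjunction $j\dashv R$ then comes from
\[\Hom_\mathcal{H}(M,R(Y))=\Hom_\mathcal{D}(M,\tau_\mathcal{U}(Y))\cong\Hom_\mathcal{D}(M,Y)=\Hom_{\mathcal{U}^\perp[1]}(j(M),Y)\]
for $M\in\mathcal{H}\subseteq\mathcal{U}$, using the adjunction for $\tau_\mathcal{U}$.

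There is no real obstacle: the statement is a routine unwinding of the definitions of $\tilde{H}$ and of the adjunctions enjoyed by the truncation functors. The only point that deserves care is verifying that $L(X)$ and $R(Y)$ actually land in $\mathcal{H}$ and not merely in one of the two pieces of the intersection $\mathcal{H}=\mathcal{U}\cap\mathcal{U}^\perp[1]$; this is handled by the standard closure of the aisle and co-aisle of a t-structure under extensions.
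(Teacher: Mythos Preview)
Your proof is correct and follows essentially the same approach as the paper's: both identify $\tau^{\mathcal{U}^\perp[1]}$ with $\tau^{\mathcal{U}^\perp}(?[-1])[1]$, observe that one of the two truncations composing $\tilde{H}$ becomes the identity on $\mathcal{U}$ (respectively $\mathcal{U}^\perp[1]$), and then read off the adjunction directly from the adjunction property of the remaining truncation. The paper argues only one side and appeals to duality via the t-structure $(\mathcal{U}^\perp[1],\mathcal{U})$ in $\mathcal{D}^{op}$, whereas you spell out both, and you add the (strictly speaking unnecessary, since $\tilde{H}$ already lands in $\mathcal{H}$ by definition) verification that $L(X)$ and $R(Y)$ lie in $\mathcal{H}$; note only that in part (1) the passage from ``first two terms in $\mathcal{U}$'' to ``third term in $\mathcal{U}$'' uses closure of the aisle under cones (equivalently, extensions together with $\mathcal{U}[1]\subseteq\mathcal{U}$), not extensions alone.
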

\begin{proof}
Suppose that $X,U\in\mathcal{U}$ and that $V\in\mathcal{U}^\perp$. A
sequence of morphisms

\begin{center}
$U\stackrel{}{\longrightarrow}X[-1]\stackrel{g}{\longrightarrow}V\stackrel{h}{\longrightarrow}U[1]$
\end{center}
is a distinguished triangle if, and only if, the sequence

\begin{center}
$U[1]\stackrel{}{\longrightarrow}X\stackrel{g[1]}{\longrightarrow}V[1]\stackrel{h[1]}{\longrightarrow}U[2]$
\end{center}
is so. It follows from this that $\tau^{\mathcal{U}^\perp
[1]}(X)=\tau^{\mathcal{U}^\perp}(X[-1])[1]$, and then the
isomorphism $\tilde{H}(X)\cong \tau^{\mathcal{U}^\perp}(X[-1])[1]$
follows from the definition of $\tilde{H}$. On the other hand, if
$Y\in\mathcal{U}^\perp [1]$ then, by the definition of $\tilde{H}$,
we get $\tilde{H}(Y)\cong\tau_\mathcal{U}(Y)$.

The part of assertion 1 relative to the adjunction is dual to that
of assertion 2 since $(\mathcal{U},\mathcal{U}^\perp [1])$ is a
t-structure in $\mathcal{D}$ exactly when
$(\mathcal{U}^\perp[1],\mathcal{U})$ is a t-structure in
$\mathcal{D}^{op}$. We then prove the adjunction of assertion 2,
which follows directly from the following chain of isomorphisms,
using the fact that
$\tau_\mathcal{U}:\mathcal{D}\longrightarrow\mathcal{U}$ is right
adjoint to the inclusion
$j_\mathcal{U}:\mathcal{U}\hookrightarrow\mathcal{D}$:

\begin{center}
$\text{Hom}_{\mathcal{U}^\perp
[1]}(j(Z),Y)=\text{Hom}_\mathcal{D}(j(Z),Y)\cong\text{Hom}_\mathcal{U}(j(Z),\tau_\mathcal{U}(Y))\cong\text{Hom}_\mathcal{H}(Z,\tilde{H}(Y))=
\text{Hom}_\mathcal{H}(Z,R(Y))$.
\end{center}
\end{proof}

\begin{prop} \label{prop.AB3 condition}
Let  $\mathcal{D}$ be a triangulated category which has coproducts
(resp. products) and let $(\mathcal{U},\mathcal{U}^\perp[1])$ be a
t-structure in $\mathcal{D}$. The heart $\mathcal{H}$ is an AB3
(resp. AB3*) abelian category.
\end{prop}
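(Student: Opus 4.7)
The plan is to deduce both assertions from Lemma~\ref{lem.adjoints to the inclusions from heart} together with the well-known closure properties of the aisle and co-aisle. I will only write the AB3 case; AB3* is strictly dual.

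First I would observe that $\mathcal{U}$ is closed under arbitrary coproducts in $\mathcal{D}$ whenever those exist. This is a direct consequence of the identity $\mathcal{U}={}^{\perp}(\mathcal{U}^{\perp})$ recorded right after the definition of a t-structure: if $(U_{i})_{i\in I}\subseteq\mathcal{U}$ and $W\in\mathcal{U}^{\perp}$, then
\[
\text{Hom}_{\mathcal{D}}\Bigl(\coprod_{i\in I}U_{i},W\Bigr)\cong\prod_{i\in I}\text{Hom}_{\mathcal{D}}(U_{i},W)=0,
\]
so $\coprod_{i}U_{i}\in{}^{\perp}(\mathcal{U}^{\perp})=\mathcal{U}$.

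Now given any family $(M_{i})_{i\in I}$ of objects of $\mathcal{H}$, I would form the coproduct $C:=\coprod_{i\in I}j(M_{i})$ inside $\mathcal{D}$, which by the previous paragraph is an object of $\mathcal{U}$. I then set $\coprod_{\mathcal{H}}M_{i}:=L(C)$, where $L:\mathcal{U}\longrightarrow\mathcal{H}$ is the left adjoint of $j:\mathcal{H}\hookrightarrow\mathcal{U}$ supplied by Lemma~\ref{lem.adjoints to the inclusions from heart}(1), equipped with the structural morphisms $M_{i}\longrightarrow L(C)$ obtained by composing the coproduct inclusion $j(M_{i})\hookrightarrow C$ in $\mathcal{U}$ with the unit $C\longrightarrow jL(C)$. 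That this is indeed a coproduct in $\mathcal{H}$ follows from the chain of natural bijections
\[
\text{Hom}_{\mathcal{H}}(L(C),N)\cong\text{Hom}_{\mathcal{U}}(C,j(N))\cong\prod_{i\in I}\text{Hom}_{\mathcal{U}}(j(M_{i}),j(N))\cong\prod_{i\in I}\text{Hom}_{\mathcal{H}}(M_{i},N),
\]
valid for every $N\in\mathcal{H}$, where the first bijection uses the adjunction, the second the universal property of $C$ in $\mathcal{D}$ (equivalently in $\mathcal{U}$), and the third the full faithfulness of $j$.

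The AB3* statement is proved in exactly the dual way: the co-aisle $\mathcal{U}^{\perp}$, and hence $\mathcal{U}^{\perp}[1]$, is closed under arbitrary products in $\mathcal{D}$ by the dual of the argument in the first paragraph, so given $(M_{i})$ in $\mathcal{H}$ one forms $P:=\prod_{i}j(M_{i})$ inside $\mathcal{U}^{\perp}[1]$ and applies the right adjoint $R$ of Lemma~\ref{lem.adjoints to the inclusions from heart}(2) to obtain the product $\prod_{\mathcal{H}}M_{i}=R(P)$ in $\mathcal{H}$. There is no genuine obstacle here; the whole content of the proposition is absorbed into Lemma~\ref{lem.adjoints to the inclusions from heart}, and the only point worth isolating is the closure of the aisle under coproducts (resp. of the co-aisle under products).
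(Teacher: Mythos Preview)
Your proof is correct and follows essentially the same approach as the paper's: both arguments use Lemma~\ref{lem.adjoints to the inclusions from heart} to produce the left adjoint $L:\mathcal{U}\to\mathcal{H}$ and then realize $\coprod_{\mathcal{H}}M_i$ as $L(\coprod_i j(M_i))$. The paper phrases this via the general principle that left adjoints preserve coproducts together with the observation that the counit $L\circ j\to 1_{\mathcal{H}}$ is an isomorphism, whereas you unfold the universal property explicitly; you also make explicit the closure of $\mathcal{U}$ under coproducts, which the paper uses tacitly.
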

\begin{proof}
 It is a known fact and very easy to prove that if
 $L:\mathcal{C}\longrightarrow\mathcal{C}'$ is a left adjoint
 functor  and $\mathcal{C}$ has  coproducts, then, for each
 family $(C_i)_{i\in I}$ of objects of $\mathcal{C}$, the family $(L(C_i))_{i\in
 I}$ has a coproduct in $\mathcal{C}'$ and one has an isomorphism $\coprod_{i\in I}L(C_i)\cong L(\coprod_{i\in
 I}C_i)$.

 Let now $(Z_i)_{i\in I}$ be a family of objects of $\mathcal{H}$.
 Since the counit $L\circ j\rightarrow 1_\mathcal{H}$ is an isomorphism, we have that $(L\circ j)(Z_i)\cong
 Z_i$ and the fact that the family has a coproduct in $\mathcal{H}$
 follows directly from the previous paragraph.

 The statement about products is dual to the one for coproducts.
\end{proof}

 The following is an interesting  type of t-structures.

\begin{defi} \label{def.(co)smashing t-structure}
Let us assume that $\mathcal{D}$ has coproducts (resp. products). The t-structure
$(\mathcal{U},\mathcal{U}^\perp [1])$ is called \emph{smashing}
(resp. \emph{co-smashing}) when $\mathcal{U}^\perp$ (resp.
$\mathcal{U}$) is closed under taking  coproducts (resp. products)
in $\mathcal{D}$. Bearing in mind that coproducts (resp. products) of triangles are triangles (see \cite[Proposition 1.2.1]{N}), this is equivalent to saying that  the left (resp. right)
truncation functor
$\tau_\mathcal{U}:\mathcal{D}\longrightarrow\mathcal{U}$ (resp.
$\tau^{\mathcal{U}^\perp}:\mathcal{D}\longrightarrow\mathcal{U}^\perp$)
preserves  coproducts (resp. products).

\end{defi}

\begin{prop} \label{prop.AB4 condition}
Let $\mathcal{D}$ be a triangulated category that has coproducts
(resp. products). If
$(\mathcal{U},\mathcal{U}^\perp [1])$ is a t-structure whose heart
$\mathcal{H}$ is closed under taking  coproducts (resp.
products) in $\mathcal{D}$, then $\mathcal{H}$ is an AB4 (resp.
AB4*) abelian category. In particular, that happens when
$(\mathcal{U},\mathcal{U}^\perp [1])$ is a smashing (resp.
co-smashing) t-structure.
\end{prop}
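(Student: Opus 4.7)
The plan is to reduce exactness of coproducts in $\mathcal{H}$ to the fact, cited in Definition \ref{def.(co)smashing t-structure}, that coproducts of triangles in $\mathcal{D}$ are triangles. The key preliminary observation is that, under the hypothesis that $\mathcal{H}$ is closed under coproducts in $\mathcal{D}$, the coproduct of a family $(Z_i)_{i\in I}$ of objects of $\mathcal{H}$ computed in $\mathcal{D}$ already lies in $\mathcal{H}$, and is therefore the coproduct in $\mathcal{H}$ (the universal property in $\mathcal{H}$ is inherited from the one in $\mathcal{D}$ since $\mathcal{H}$ is a full subcategory). So $\mathcal{D}$-coproducts and $\mathcal{H}$-coproducts agree on families of objects of $\mathcal{H}$.

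Next I would recall that a sequence $0\to X\to Y\to Z\to 0$ in $\mathcal{H}$ is short exact precisely when it extends to a distinguished triangle $X\to Y\to Z\stackrel{+}{\longrightarrow}$ in $\mathcal{D}$. Given a family of such short exact sequences $(0\to X_i\to Y_i\to Z_i\to 0)_{i\in I}$ in $\mathcal{H}$, I would form the corresponding family of triangles in $\mathcal{D}$, take their coproduct, and invoke \cite[Proposition 1.2.1]{N} to conclude that $\coprod X_i\to \coprod Y_i\to \coprod Z_i\stackrel{+}{\longrightarrow}$ is again a triangle in $\mathcal{D}$. By the closure hypothesis, all three terms lie in $\mathcal{H}$, so this triangle encodes a short exact sequence $0\to \coprod X_i\to \coprod Y_i\to \coprod Z_i\to 0$ in $\mathcal{H}$, where the coproducts are as well those computed in $\mathcal{H}$ by the first paragraph. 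This is precisely the exactness of the coproduct functor, i.e.\ the AB4 condition.

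For the final "in particular" clause, I would verify that a smashing t-structure automatically makes $\mathcal{H}$ closed under coproducts in $\mathcal{D}$. The aisle $\mathcal{U}={}^\perp(\mathcal{U}^\perp)$ is closed under coproducts in $\mathcal{D}$ without any further hypothesis, because for $X_i\in\mathcal{U}$ and $Y\in\mathcal{U}^\perp$ one has $\operatorname{Hom}_\mathcal{D}(\coprod X_i,Y)\cong\prod\operatorname{Hom}_\mathcal{D}(X_i,Y)=0$. The smashing hypothesis says that $\mathcal{U}^\perp$ is closed under coproducts, so $\mathcal{U}^\perp[1]$ is also, and hence $\mathcal{H}=\mathcal{U}\cap\mathcal{U}^\perp[1]$ is closed under coproducts in $\mathcal{D}$, reducing the smashing case to the general statement just proved. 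The dual AB4$^*$ statement follows by applying everything to the opposite category, noting that $(\mathcal{U}^\perp[1],\mathcal{U})$ is a t-structure in $\mathcal{D}^{op}$.

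No serious obstacle is expected: the whole argument is a direct translation of the triangulated fact that coproducts of triangles are triangles into the heart, once one observes that inclusion of $\mathcal{H}$ into $\mathcal{D}$ preserves coproducts under the given hypothesis. The only mild subtlety is confirming that aisles are always closed under coproducts, which is what makes the smashing hypothesis on $\mathcal{U}^\perp$ alone sufficient to ensure the closure of $\mathcal{H}$.
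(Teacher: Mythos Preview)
Your argument is correct and is essentially identical to the paper's own proof: the paper also turns each short exact sequence in $\mathcal{H}$ into a triangle, takes the coproduct of these triangles using Neeman's result, and uses the closure of $\mathcal{H}$ under coproducts to conclude. Your proposal is slightly more detailed in spelling out why $\mathcal{D}$-coproducts agree with $\mathcal{H}$-coproducts and why $\mathcal{U}$ is always closed under coproducts, but the strategy is the same.
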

\begin{proof}
Note that $\mathcal{U}$ (resp. $\mathcal{U}^\perp $) is closed under
taking  coproducts (resp. products) in $\mathcal{D}$. Then the final
assertion follows automatically from the first part of the
proposition and from the definition of smashing (resp. co-smashing)
t-structure.

We just do the AB4 case since the AB4* one is dual.  Let
$(0\rightarrow X_i\longrightarrow Y_i\longrightarrow Z_i\rightarrow
0)_{i\in I}$ be a family of short exact sequences in $\mathcal{H}$.
According to \cite{BBD}, they come from triangles in $\mathcal{D}$.
By \cite[Proposition 1.2.2]{N}, we get a triangle in $\mathcal{D}$

\begin{center}
$\coprod_{i\in I}X_i\longrightarrow\coprod_{i\in
I}Y_i\longrightarrow\coprod_{i\in
I}Z_i\stackrel{+}{\longrightarrow}$,
\end{center}
where the three terms are in $\mathcal{H}$ since $\mathcal{H}$  is
closed under taking  coproducts in $\mathcal{D}$. We then get the
desired short exact sequence $0\rightarrow \coprod_{i\in
I}X_i\longrightarrow\coprod_{i\in I}Y_i\longrightarrow\coprod_{i\in
I}Z_i\rightarrow 0$ in $\mathcal{H}$.
\end{proof}

\begin{defi} \label{def.cohomological datum}
Let $\mathcal{X}$ be any full subcategory of $\mathcal{D}$.  A
\emph{cohomological datum in $\mathcal{D}$ with respect to
$\mathcal{X}$} is a pair $(H,r)$ consisting of a cohomological
functor $H:\mathcal{D}\longrightarrow\mathcal{A}$, where
$\mathcal{A}$ is an abelian category, and $r$ is an element of
$\mathbb{Z}\cup\{+\propto\}$ such that the family of functor
$(H^k_{|\mathcal{X}}:\mathcal{X}\hookrightarrow\mathcal{D}\stackrel{H^k}{\longrightarrow}\mathcal{A})_{k<r}$
is conservative. That is, if $X\in \mathcal{X}$ and $H^k(X)=0$, for
all $k<r$, then $X=0$.
\end{defi}

The following is an useful result inspired by \cite[Theorem
3.7]{CGM}:

\begin{prop} \label{prop.exactness of I-colimits}
Suppose that $\mathcal{D}$ has coproducts   and let
$(H:\mathcal{D}\longrightarrow\mathcal{A},r)$ be a cohomological
datum in $\mathcal{D}$ with respect to the heart
$\mathcal{H}=\mathcal{U}\cap\mathcal{U}^\perp [1]$. Suppose that $I$
is a small category  such that $I$-colimits exist and are exact in
$\mathcal{A}$. If, for each  diagram $X:I\longrightarrow\mathcal{H}$
and each integer $k<r$, the canonical morphism
$\text{colim}H^k(X_i)\longrightarrow H^k(colim_{\mathcal{H}}(X_i))$
is an isomorphism, then $I$-colimits are exact in $\mathcal{H}$.
\end{prop}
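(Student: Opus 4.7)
The plan is to argue that the colimit functor $\text{colim}_{\mathcal{H}}\colon[I,\mathcal{H}]\to\mathcal{H}$ (which exists and is right exact by Proposition \ref{prop.AB3 condition} and a general adjunction argument) is also left exact. Given a levelwise short exact sequence $0\to X\to Y\to Z\to 0$ in $[I,\mathcal{H}]$, I would let $K$ denote the kernel in $\mathcal{H}$ of the induced morphism $f\colon\text{colim}_{\mathcal{H}}X\to\text{colim}_{\mathcal{H}}Y$ and aim to show $K=0$. By the cohomological-datum property applied to $K\in\mathcal{H}$, it suffices to verify that $H^k(K)=0$ for every $k<r$.

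To analyse $H^k(K)$ I would split the four-term exact sequence
\[
0\to K\to \text{colim}_{\mathcal{H}}X\xrightarrow{f}\text{colim}_{\mathcal{H}}Y\to\text{colim}_{\mathcal{H}}Z\to 0
\]
in $\mathcal{H}$ (whose last three terms come from right exactness) into two short exact sequences: (1) $0\to K\to\text{colim}_{\mathcal{H}}X\to L\to 0$, with $L=\text{Im}(f)$, and (2) $0\to L\to\text{colim}_{\mathcal{H}}Y\to\text{colim}_{\mathcal{H}}Z\to 0$. The key claim, and the step I expect to be the main obstacle, is that the natural map $g_k\colon H^k(\text{colim}_{\mathcal{H}}X)\to H^k(L)$ appearing in the LES of (1) is an isomorphism for every $k<r$.

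To establish this, I would construct, naturally in $i\in I$, a morphism of short exact sequences in $\mathcal{H}$ from $0\to X_i\to Y_i\to Z_i\to 0$ to the sequence (2), with vertical arrows $X_i\to\text{colim}_{\mathcal{H}}X\twoheadrightarrow L$, $Y_i\to\text{colim}_{\mathcal{H}}Y$ and $Z_i\to\text{colim}_{\mathcal{H}}Z$; commutativity of the squares is just the universal factorization of $f$ through $L$. Applying the cohomological functor $H$ gives a morphism of long exact sequences, natural in $i$. Taking $I$-colimits of the top row in $\mathcal{A}$ (exact by hypothesis) and invoking the given isomorphism $\text{colim}\,H^k(?)\cong H^k(\text{colim}_{\mathcal{H}}?)$ at both $k$ and $k-1$ (both $<r$), I obtain a commutative diagram of exact five-term sequences in $\mathcal{A}$ whose four outer vertical maps are identities and whose middle vertical is exactly $g_k$. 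The five-lemma then forces $g_k$ to be an isomorphism.

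To conclude, I would plug the isomorphisms $g_{k-1}$ and $g_k$ into the LES of (1) around $H^k(K)$:
\[
H^{k-1}(\text{colim}_{\mathcal{H}}X)\xrightarrow{g_{k-1}}H^{k-1}(L)\longrightarrow H^k(K)\longrightarrow H^k(\text{colim}_{\mathcal{H}}X)\xrightarrow{g_k}H^k(L).
\]
Since both flanking maps are isomorphisms, exactness forces $H^k(K)=0$. As this holds for every $k<r$, the cohomological-datum property gives $K=0$, so $\text{colim}_{\mathcal{H}}$ preserves monomorphisms and is therefore exact. The delicate point throughout is verifying that, after passing to colimits and applying the hypothesis, the four outer vertical maps in the five-term diagram really do become identities; this rests on the fact that the structural cocones $H^k(?_i)\to H^k(\text{colim}_{\mathcal{H}}?)$ induce, under the hypothesis, precisely the isomorphisms $\text{colim}\,H^k(?)\cong H^k(\text{colim}_{\mathcal{H}}?)$.
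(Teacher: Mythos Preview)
Your proposal is correct and follows essentially the same route as the paper: both arguments split the right-exact four-term colimit sequence at $L=\text{Im}(f)$, compare (via the five-lemma) the colimit of the levelwise long exact sequence with the long exact sequence of $0\to L\to\text{colim}_{\mathcal{H}}Y\to\text{colim}_{\mathcal{H}}Z\to 0$ to force $H^k(\text{colim}_{\mathcal{H}}X)\to H^k(L)$ to be an isomorphism for $k<r$, and then read off $H^k(K)=0$ from the long exact sequence of $0\to K\to\text{colim}_{\mathcal{H}}X\to L\to 0$. The only cosmetic difference is that the paper writes the comparison diagram directly (with the central vertical arrow being the composite $\text{colim}\,H^k(X_i)\to H^k(\text{colim}_{\mathcal{H}}X_i)\to H^k(L)$), whereas you first build the map of short exact sequences and then pass to colimits; the resulting diagram is the same.
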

\begin{proof}
By proposition \ref{prop.AB3 condition}, we know that $\mathcal{H}$
is AB3 or, equivalently, cocomplete. Let us consider an $I$-diagram
$0\rightarrow X_i\stackrel{f_i}{\longrightarrow}
Y_i\stackrel{g_i}{\longrightarrow} Z_i\rightarrow 0$ of short exact
sequences in $\mathcal{H}$. Formally speaking, this is just a
functor $I\longrightarrow\mathcal{H}_{sec}$, where
$\mathcal{H}_{sec}$ denotes the  category of short exact sequences
in $\mathcal{H}$. By right exactness of colimits, we
 then get
an exact sequence in $\mathcal{H}$:

\begin{center}
$\text{colim}_\mathcal{H}(X_i)\stackrel{f}{\longrightarrow}\text{colim}_\mathcal{H}(Y_i)
\stackrel{g}{\longrightarrow}\text{colim}_\mathcal{H}(Z_i)\rightarrow
0$.
\end{center}
We put $L:=\text{Im}(f)$ and then consider the two induced short
exact sequences in  $\mathcal{H}$:

\begin{center}
$0\rightarrow L\longrightarrow
\text{colim}_\mathcal{H}(Y_i)\stackrel{g}{\longrightarrow}\text{colim}_\mathcal{H}(Z_i)\rightarrow
0$

$0\rightarrow W\longrightarrow
\text{colim}_\mathcal{H}(X_i)\stackrel{p}{\longrightarrow}L\rightarrow
0$.
\end{center}

We view all given short exact sequences in $\mathcal{H}$ as
triangles in $\mathcal{D}$ and use the cohomological condition of
$H$ and the fact that $I$-colimits are exact in $\mathcal{A}$, and
get the following commutative diagram in $\mathcal{A}$ with exact
rows.

$$\xymatrix{\text{colim}H^{k-1}(Y_{i}) \ar[r] \ar[d]^{\wr} & \text{colim}H^{k-1}(Z_{i}) \ar[r] \ar[d]^{\wr} & \text{colim}H^{k}(X_{i}) \ar[r] \ar[d] & \text{colim}H^{k}(Y_{i}) \ar[r] \ar[d]^{\wr} & \text{colim}H^{k}(Z_{i}) \ar[d]^{\wr} \\ H^{k-1}(\text{colim}_{\mathcal{H}}(Y_{i})) \ar[r] & H^{k-1}(\text{colim}_{\mathcal{H}}(Z_{i})) \ar[r] & H^{k}(L) \ar[r] & H^{k}(\text{colim}_{\mathcal{H}}(Y_{i})) \ar[r] & H^{k}(\text{colim}_{\mathcal{H}}(Z_{i}))}$$

where the vertical arrow $\text{colim}H^k(X_i)\longrightarrow
H^k(L)$ is the composition $\text{colim}H^k(X_i)\longrightarrow
H^k(\text{colim}_{\mathcal{H}}(X_i))\stackrel{H^k(p)}{\longrightarrow}H^k(L)$,
for each $k\in\mathbb{Z}$. For $k<r$, in principle, all the vertical
arrows except the central one are isomorphisms. Then also the
central one is an isomorphism, which implies that $H^k(p)$ is an
isomorphism since, by hypothesis, the canonical morphism
$\text{colim}H^k(X_i)\longrightarrow
H^k(\text{colim}_{\mathcal{H}}(X_i))$ is an isomorphism. We then get
that $H^k(W)=0$, for all $k<r$, which implies that $W=0$ due to
definition \ref{def.cohomological datum}. Therefore $p$ is an
isomorphism.
\end{proof}

\begin{exem} \label{exem.compact cohomological datum}
If $(\mathcal{U},\mathcal{U}^\perp [1])$ be a compactly generated
t-structure and let  $\mathcal{S}$ be a set of compact generators of
its aisle.  Then
$H:=\coprod_{S\in\mathcal{S}}\text{Hom}_\mathcal{D}(S,?):\mathcal{D}\longrightarrow\text{Ab}$
is a cohomological functor. Moreover,   the pair $(H,1)$ is a
cohomological datum with respect to the heart
$\mathcal{H}=\mathcal{U}\cap\mathcal{U}^\perp [1]$.
\end{exem}

Given a sequence

\begin{center}
$X_0\stackrel{f_1}{\longrightarrow}X_1\stackrel{f_2}{\longrightarrow}\cdots \stackrel{f_n}{\longrightarrow}X_n\stackrel{f_{n+1}}{\longrightarrow} \cdots $
\end{center}
of morphisms in  $\mathcal{D}$, we will call  \emph{Milnor colimit}
of the sequence, denoted $\text{Mcolim}X_n$, what is  called
homotopy colimit in \cite{N}.

\begin{lemma} \label{lem.Milnor Colimits}
Suppose that $\mathcal{D}$ has coproducts and that
$(\mathcal{U},\mathcal{U}^{\perp}[1])$ is a compactly generated
t-structure in $\mathcal{D}$. Then $\mathcal{U}^{\perp}$ is closed
under taking Milnor colimits.
\end{lemma}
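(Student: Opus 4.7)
The plan is to reduce the statement, via the defining triangle of the Milnor colimit, to a routine check on Hom groups, using that the compact generators of $\mathcal{U}$ commute with coproducts.

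Fix a sequence $X_{0}\xrightarrow{f_{1}}X_{1}\xrightarrow{f_{2}}\cdots$ in $\mathcal{U}^{\perp}$, and let $\mathcal{S}\subseteq\mathcal{U}$ be a set of compact generators of the aisle. By definition of Milnor colimit (see \cite{N}) there is a triangle
\begin{center}
$\coprod_{m\geq 0}X_{m}\stackrel{1-\sigma}{\longrightarrow}\coprod_{m\geq 0}X_{m}\longrightarrow\text{Mcolim}\, X_{m}\stackrel{+}{\longrightarrow}$
\end{center}
in $\mathcal{D}$, where $\sigma$ is the shift induced by the $f_{m+1}$. By the definition of a compactly generated t-structure, it is enough to show that $\text{Hom}_{\mathcal{D}}(S[n],\text{Mcolim}\, X_{m})=0$ for every $S\in\mathcal{S}$ and every integer $n\geq 0$.

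I would apply the cohomological functor $\text{Hom}_{\mathcal{D}}(S[n],-)$ to the triangle above. Since $S$ is compact, so is $S[n]$, and hence $\text{Hom}_{\mathcal{D}}(S[n],-)$ commutes with the coproducts appearing in the triangle. The resulting long exact sequence takes the form
\begin{center}
$\bigoplus_{m}\text{Hom}_{\mathcal{D}}(S[n],X_{m})\stackrel{1-\sigma^{*}}{\longrightarrow}\bigoplus_{m}\text{Hom}_{\mathcal{D}}(S[n],X_{m})\longrightarrow\text{Hom}_{\mathcal{D}}(S[n],\text{Mcolim}\, X_{m})\longrightarrow\bigoplus_{m}\text{Hom}_{\mathcal{D}}(S[n],X_{m}[1])\stackrel{1-\sigma^{*}}{\longrightarrow}\bigoplus_{m}\text{Hom}_{\mathcal{D}}(S[n],X_{m}[1]).$
\end{center}

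Since each $X_{m}\in\mathcal{U}^{\perp}$ and $n\geq 0$, the characterization of $\mathcal{U}^{\perp}$ via $\mathcal{S}$ forces $\text{Hom}_{\mathcal{D}}(S[n],X_{m})=0$ for all $m$, so the leftmost term in the displayed sequence is zero. Thus $\text{Hom}_{\mathcal{D}}(S[n],\text{Mcolim}\, X_{m})$ injects into the kernel of the rightmost telescope map $1-\sigma^{*}$ on $\bigoplus_{m}\text{Hom}_{\mathcal{D}}(S[n],X_{m}[1])$. The remaining step is the well-known elementary observation that, for any direct system $(A_{m},\varphi_{m+1})$ of abelian groups, the morphism $1-\sigma^{*}:\bigoplus A_{m}\rightarrow\bigoplus A_{m}$ is injective (one shows inductively, coordinate by coordinate, that any element of the kernel vanishes). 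Applying this with $A_{m}:=\text{Hom}_{\mathcal{D}}(S[n],X_{m}[1])$ yields $\text{Hom}_{\mathcal{D}}(S[n],\text{Mcolim}\, X_{m})=0$, as required.

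The only potential obstacle I see is bookkeeping: keeping the sign/shift convention for $\sigma$ straight and confirming that the compactness of $S$ is what converts $\text{Hom}_{\mathcal{D}}(S[n],\coprod X_{m}[1])$ into $\bigoplus\text{Hom}_{\mathcal{D}}(S[n],X_{m}[1])$ (rather than, say, a $\varprojlim^{1}$ showing up). Once the direct sum passes inside, no further Mittag--Leffler condition is needed, because for a \emph{direct} (not inverse) system the telescope map is automatically injective, and we never have to deal with $\text{Hom}_{\mathcal{D}}(S[-1],X_{m})$, which would lie outside the range of the hypothesis.
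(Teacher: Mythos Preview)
Your proposal is correct and follows essentially the same route as the paper: apply $\text{Hom}_\mathcal{D}(S[n],-)$ to the defining Milnor triangle, use compactness to pass to direct sums, and exploit the injectivity of the telescope map $1-\sigma^*$ on $\bigoplus_m A_m$. The only difference is organizational: the paper first isolates, for an arbitrary compact $C$, the isomorphism $\text{Hom}_\mathcal{D}(C,\text{Mcolim}\,X_m)\cong\varinjlim\text{Hom}_\mathcal{D}(C,X_m)$ and then specializes to $C=S[n]$ with $X_m\in\mathcal{U}^\perp$, whereas you go straight to the vanishing; the paper's intermediate isomorphism is reused in the proof of Theorem~\ref{teor.countable direct limits and compactly t-structure}, so you may want to record it separately.
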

\begin{proof}
Let $\mathcal{D}^c$ be the full subcategory of compact objects and
take  $C\in \mathcal{D}^c$ arbitrary. We claim that, for any diagram
in $\mathcal{D}$ of the form:
$$\xymatrix{(\ast) & X_0 \ar[r]^{f_1} & X_1 \ar[r]^{f_2} & X_2 \ar[r]^{f_3}  & \cdots },$$
we have an isomorphism $Hom_{\mathcal{D}}(C,Mcolim(X_n))\cong
\varinjlim{Hom_{\mathcal{D}}(C,X_n)}$. To see that, let us consider
the canonical triangle
$$\xymatrix{\coprod_{n\geq 0} X_n \ar[r]^{1-f \hspace{0.1 cm}} & \coprod_{n\geq 0} X_n \ar[r] & Mcolim(X_{n}) \ar[r]^{\hspace{0.8cm +}} &}$$
if $C\in \mathcal{D}^{c}$ then we have that the following diagram is
commutative:
$$\xymatrix{\coprod_{n \geq 0} Hom_{\mathcal{D}}(C,X_n) \ar[r]^{1-f_{\ast}} \ar[d]^{\wr} & \coprod_{n \geq 0} Hom_{\mathcal{D}}(C,X_n) \ar[d]^{\wr} \\ Hom_{\mathcal{D}}(C,\coprod_{n\geq 0}X_n) \ar[r]^{(1-f)^{\ast}} & Hom_{\mathcal{D}}(C,\coprod_{n \geq 0}X_n) }$$
Note that $1-f_{\ast}$ is a monomorphism in $Ab$. Then we get an
exact sequence in $Ab$ of the form:
$$\xymatrix{\cdots \ar[r]^{0 \hspace{1.5 cm}} & Hom_{\mathcal{D}}(C, \coprod_{n \geq 0} X_n) \ar[r] & Hom_{\mathcal{D}}(C, \coprod_{n \geq 0} X_n) \ar[r] & Hom_{\mathcal{D}}(C,Mcolim(X_n)) \ar@(d,ur)[dl]_{0} \\ &&  Hom_{\mathcal{D}}(C[-1], \coprod_{n \geq 0} X_n) \ar[r] &  Hom_{\mathcal{D}}(C[-1], \coprod_{n \geq 0} X_n) \ar[r] & \cdots}$$
This proves the claim since
$\text{Coker}(1-f_\ast)=\varinjlim\text{Hom}_\mathcal{D}(C,X_n)$.
Now if all the $X_n$ are in $\mathcal{U}^{\perp}$, then for each
$C\in \mathcal{U}\cap \mathcal{D}^{c}$ and each $k\geq 0$, we have
$Hom_{\mathcal{D}}(C[k],Mcolim(X_n)) \cong
\varinjlim{Hom_{\mathcal{D}}(C[k],X_n)}=0$. This shows that
$\text{Mcolim}(X_n)\in\mathcal{U}^\perp$ since the t-structure is
compactly generated.
\end{proof}

\begin{teor}\label{teor.countable direct limits and compactly t-structure}
Suppose that $\mathcal{D}$ has coproducts and let
$(\mathcal{U},\mathcal{U}^{\perp}[1])$ be a compactly generated
t-structure in $\mathcal{D}$. Countable direct limits are exact in
$\mathcal{H}=\mathcal{U}\cap \mathcal{U}^{\perp}[1]$.
\end{teor}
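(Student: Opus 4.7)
The plan is to invoke Proposition~\ref{prop.exactness of I-colimits} with $I=\mathbb{N}$ (as a directed set) and the cohomological datum $(H,1)$ from Example~\ref{exem.compact cohomological datum}, where $H=\coprod_{S\in\mathcal{S}}\Hom_\mathcal{D}(S,?)$ for a set $\mathcal{S}$ of compact generators of the aisle $\mathcal{U}$. Since countable direct limits are exact in $\text{Ab}$, the task reduces to verifying, for each countable direct system $(X_n)_{n\in\mathbb{N}}$ in $\mathcal{H}$ and each integer $k\leq 0$, that the canonical morphism $\varinjlim H^k(X_n)\longrightarrow H^k(\varinjlim_\mathcal{H}X_n)$ is an isomorphism.

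The crux is to identify the countable direct limit taken in $\mathcal{H}$ with the Milnor colimit taken in $\mathcal{D}$. Set $M:=\text{Mcolim}(X_n)$, defined by the triangle $\coprod X_n\stackrel{1-f}{\longrightarrow}\coprod X_n\longrightarrow M\stackrel{+}{\longrightarrow}$. First I would show that $M\in\mathcal{H}$. Since the aisle $\mathcal{U}$ is closed under coproducts and under cones of its own morphisms, $M\in\mathcal{U}$. Shifting the defining triangle down by one gives $M[-1]\cong \text{Mcolim}(X_n[-1])$, and since $(X_n[-1])_n$ is a sequence in $\mathcal{U}^\perp$, Lemma~\ref{lem.Milnor Colimits} yields $M[-1]\in\mathcal{U}^\perp$, so $M\in\mathcal{U}^\perp[1]$. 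Moreover, any compactly generated t-structure is smashing, so $\mathcal{U}^\perp$, and hence also $\mathcal{H}=\mathcal{U}\cap\mathcal{U}^\perp[1]$, is closed under coproducts in $\mathcal{D}$. Consequently the defining triangle of $M$ has all three terms in $\mathcal{H}$ and thus represents a short exact sequence $0\to\coprod X_n\stackrel{1-f}{\longrightarrow}\coprod X_n\longrightarrow M\to 0$ in $\mathcal{H}$. This exhibits $M$ as the cokernel of $1-f$ computed in $\mathcal{H}$, that is, as $\varinjlim_\mathcal{H}X_n$.

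With the identification $\varinjlim_\mathcal{H}X_n\cong M$ in hand, the remaining cohomological check is routine. For any $k\leq 0$ and any $S\in\mathcal{S}$, the shifted object $S[-k]$ is again compact, so the computation in the proof of Lemma~\ref{lem.Milnor Colimits} yields $\Hom_\mathcal{D}(S[-k],M)\cong\varinjlim\Hom_\mathcal{D}(S[-k],X_n)$. Summing over $S\in\mathcal{S}$ and commuting the coproduct past the filtered colimit in $\text{Ab}$ delivers the required isomorphism $H^k(\varinjlim_\mathcal{H}X_n)\cong\varinjlim H^k(X_n)$, at which point Proposition~\ref{prop.exactness of I-colimits} applies and finishes the proof. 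The main obstacle is the middle step: pinning down that the Milnor colimit $M$ actually lives in $\mathcal{H}$ and represents the direct limit there. Once this is established, the cohomological identity is essentially forced by the compact-test formula embedded in Lemma~\ref{lem.Milnor Colimits}.
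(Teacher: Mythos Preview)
Your argument is correct and follows essentially the same route as the paper: identify $\varinjlim_{\mathcal{H}}X_n$ with the Milnor colimit via Lemma~\ref{lem.Milnor Colimits}, then feed the cohomological datum of Example~\ref{exem.compact cohomological datum} into Proposition~\ref{prop.exactness of I-colimits}. The only point you gloss over is the reduction from an arbitrary countable directed set to $I=\mathbb{N}$, which the paper handles explicitly by choosing a cofinal copy of $\mathbb{N}$ (using \cite[Lemma~1.6]{AR}); once that is said, your proof matches the paper's.
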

\begin{proof}
Let $I$ be a countable directed set. Then there is an ascending
chain of finite directed subposets $I_0\subset I_1\subset I_2 \dots
$ such that $I=\cup_{n\in \mathbb{N}} I_n$ (see \cite[Lemma
1.6]{AR}). If we put $i_{n}=max\{I_n\}$, for all $n\in \mathbb{N}$,
we clearly have $i_{n} \leq i_{n+1}$, for all $n$ and
$J=\{i_0,i_1,\ldots,i_n,\ldots\}$ is a cofinal subset of $I$ which
isomorphic to $\mathbb{N}$ as an ordered set. Then we know that, for
any category $\mathcal{C}$ with direct limits, the diagram
$$\xymatrix{ [J,\mathcal{C}] \ar[r]^{colim_J \hspace{0.2 cm}} & colim_{J}\mathcal{C}  \\  [I,\mathcal{C}] \ar[ur]_{colim_I} \ar[u]^{restriction}& }$$
is commutative.

We can then assume that $I=\mathbb{N}$. But the previous lemma tells
us that if $X:\mathbb{N}\longrightarrow\mathcal{H}$
($n\rightsquigarrow X_n$) is any diagram, then the triangle
$$\xymatrix{\coprod_{n\geq 0} X_n \ar[r]^{1-f \hspace{0.2 cm}} & \coprod_{n \geq 0} X_n \ar[r] & Mcolim(X_n) \ar[r]^{\hspace{0.8cm}+} &}$$
lives in $\mathcal{H}$ and, hence, it is an exact sequence in this
abelian category. Therefore $Mcolim(X_n)\cong
\varinjlim_{\mathcal{H}}{X_n}$. If now $\mathcal{S}$ is any set of
compact generators of the aisle $\mathcal{U}$ and we take the
cohomological functor
$H:=\coprod_{S\in\mathcal{S}}\text{Hom}_\mathcal{D}(S,?):\mathcal{D}\longrightarrow\text{Ab}$,
then, by the proof of the previous lemma,  the induced map
$\varinjlim H^k(X_n)\longrightarrow H^k (\varinjlim_\mathcal{H}X_n)$
is an isomorphism, for each $k\in\mathbb{Z}$. The result now follows
from proposition \ref{prop.exactness of I-colimits} and example
\ref{exem.compact cohomological datum}.
\end{proof}

In view of last result, the following is  a natural question:

\begin{ques}
Let $\mathcal{D}$ be a triangulated category and
$(\mathcal{U},\mathcal{U}^\perp [1])$ be a smashing t-structure in
$\mathcal{D}$. Is its heart $\mathcal{H}$ an AB5 abelian category?.
Is it so when  the t-structure is compactly generated?
\end{ques}

In next section we tackle this question for the (smashing)
t-structure in $\mathcal{D}(\mathcal{G})$ defined by a torsion pair
in  the Grothendieck category $\mathcal{G}$. In a forthcoming paper
\cite{PS}, we will settle it for essentially all the compactly
generated t-structures in $\mathcal{D}(R)$, when $R$ is a
commutative Noetherian ring.

\section{When is the heart of a torsion pair a Grothendieck category?}

All throughout this section $\mathcal{G}$ is a Grothendieck category
and $\mathbf{t}=(\mathcal{T},\mathcal{F})$ is a torsion pair in
$\mathcal{G}$. We will follow the terminology and notation
introduced in example \ref{exems.two canonical ones}(2). Note that
$\mathcal{T}$ is closed under taking direct limits in $\mathcal{G}$,
while $\mathcal{F}$ need not be so. Note also that
$(\mathcal{U}_\mathbf{t},\mathcal{W}_\mathbf{t})=(\mathcal{U}_\mathbf{t},\mathcal{U}_\mathbf{t}^\perp
[1])$ is a smashing t-structure in $\mathcal{D}(\mathcal{G})$, so
that, by proposition \ref{prop.AB4 condition}, the heart
$\mathcal{H}_\mathbf{t}$ is always an AB4 abelian category.

\begin{lemma} \label{lem.exactness of the Hi}
Let $\mathbf{t}=(\mathcal{T},\mathcal{F})$ be a torsion pair in the
Grothendieck category $\mathcal{G}$, let
$(\mathcal{U_\mathbf{t}},\mathcal{U}_\mathbf{t}^\perp [1])$ be its
associated t-structure (see example \ref{exems.two canonical
ones}(2)) and let $\mathcal{H}_\mathbf{t}$ be its heart. The functor
$H^0:\mathcal{H}_\mathbf{t}\longrightarrow\mathcal{G}$ is right
exact while the functor
$H^{-1}:\mathcal{H}_\mathbf{t}\longrightarrow\mathcal{G}$ is left
exact. Both of them preserves coproducts.

\end{lemma}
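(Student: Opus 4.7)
The plan is to reduce both exactness statements to the long exact cohomology sequence of a triangle, and to deduce the coproduct claim from the smashing character of the Happel--Reiten--Smal\o{} t-structure.

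First I would take a short exact sequence $0\to L\to M\to N\to 0$ in $\mathcal{H}_{\mathbf{t}}$. By \cite{BBD}, this arises from a distinguished triangle $L\to M\to N\stackrel{+}{\longrightarrow}$ in $\mathcal{D}(\mathcal{G})$. Applying the cohomological functor $H^{\ast}:\mathcal{D}(\mathcal{G})\to\mathcal{G}$ (the classical one, not $\tilde H$) yields an infinite long exact sequence. The key observation is that, by the description recalled in Example \ref{exems.two canonical ones}(2), every object of $\mathcal{H}_{\mathbf{t}}$ has $H^{k}=0$ for $k\neq -1,0$. Hence the long exact sequence collapses to the six-term exact sequence
\[
0\to H^{-1}(L)\to H^{-1}(M)\to H^{-1}(N)\to H^{0}(L)\to H^{0}(M)\to H^{0}(N)\to 0,
\]
from which the left exactness of $H^{-1}_{|\mathcal{H}_{\mathbf{t}}}$ and the right exactness of $H^{0}_{|\mathcal{H}_{\mathbf{t}}}$ are immediate.

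For the statement about coproducts, I would invoke the fact that $(\mathcal{U}_{\mathbf{t}},\mathcal{U}_{\mathbf{t}}^{\perp}[1])$ is smashing (as recalled at the beginning of the section). Since in any triangulated category with coproducts the aisle is automatically closed under coproducts (being of the form ${}^{\perp}\mathcal{X}$), while smashing means the co-aisle is too, the heart $\mathcal{H}_{\mathbf{t}}$ is closed under coproducts in $\mathcal{D}(\mathcal{G})$. Thus coproducts computed in $\mathcal{H}_{\mathbf{t}}$ (whose existence is given by Proposition \ref{prop.AB3 condition}) coincide with those computed in $\mathcal{D}(\mathcal{G})$. Finally, coproducts in $\mathcal{D}(\mathcal{G})$ are represented by termwise coproducts of complexes, and since $\mathcal{G}$ is AB5 the classical cohomology functors $H^{k}:\mathcal{D}(\mathcal{G})\to\mathcal{G}$ commute with coproducts. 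Restricting to $\mathcal{H}_{\mathbf{t}}$ gives the claim for both $H^{-1}$ and $H^{0}$.

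I do not foresee a genuine obstacle: the only mildly delicate point is the identification of coproducts in $\mathcal{H}_{\mathbf{t}}$ with those in $\mathcal{D}(\mathcal{G})$, which is where the smashing hypothesis enters essentially. Everything else is a direct application of the standard long exact cohomology sequence and the defining constraints on objects of the heart.
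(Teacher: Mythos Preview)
Your argument is correct and follows essentially the same route as the paper: the exactness claims are deduced from the long exact homology sequence after observing that $H^k$ vanishes on $\mathcal{H}_\mathbf{t}$ for $k\neq -1,0$, and the coproduct claim from the fact that coproducts in $\mathcal{H}_\mathbf{t}$ are computed as in $\mathcal{D}(\mathcal{G})$. You merely spell out in more detail the justification (via smashing) for that last fact, which the paper takes for granted.
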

\begin{proof}
 The functor $H^k$ vanishes on $\mathcal{H}_\mathbf{t}$, for each
$k\neq -1,0$. By applying now the long exact sequence of homologies
to any short exact sequence in $\mathcal{H}_\mathbf{t}$ the right
(resp. left) exactness of $H^0$ (resp. $H^{-1}$) follows
immediately. Since coproducts in $\mathcal{H}_\mathbf{t}$ are
calculated as in $\mathcal{D}(\mathcal{G})$, the fact that  $H^0$
and $H^{-1}$ preserve coproducts is clear.
\end{proof}

\begin{defi} \label{def.colimit-defining morphism}
Let $I$ be a directed set and $\mathcal{C}$ be any cocomplete
category. Given an $I$-directed system $[(X_i)_{i\in
I},(u_{ij})_{i\leq j}]$, we put $X_{ij}=X_i$, for all $i\leq j$. The
\emph{colimit-defining morphism} associated to the direct system is
the unique morphism $f:\coprod_{i\leq
j}X_{ij}\longrightarrow\coprod_{i\in I}X_i$ such that if
$\lambda_{kl}:X_{kl}\longrightarrow\coprod_{i\leq j}X_{ij}$ and
$\lambda_j:X_j\longrightarrow\coprod_{i\in I}X_i$ are the canonical
morphisms into the coproducts, then
$f\circ\lambda_{ij}=\lambda_i-\lambda_j\circ u_{ij}$ for all $i\leq
j$.
\end{defi}
By classical category theory (see, e.g. \cite[Proposition
II.6.2]{P}), in the situation of last definition, we have that
$L:=\varinjlim X_i\cong\text{Coker}(f)$.

The following is the crucial result for our purposes. In its
statements and all throughout the rest of the paper, unadorned
direct limits are considered in $\mathcal{G}$, while we will denote
by  $\varinjlim_{\mathcal{H}_\mathbf{t}}$ the direct limit in
$\mathcal{H}_\mathbf{t}$.

\begin{prop} \label{prop.direct limit under H0 and H1}
Let $\mathbf{t}=(\mathcal{T},\mathcal{F})$ be a torsion pair in the
Grothendieck category $\mathcal{G}$ and let $\mathcal{H}_\mathbf{t}$
be the heart of the associated t-structure in
$\mathcal{D}(\mathcal{G})$. The following assertions hold:

\begin{enumerate}
\item If $(M_i)_{i\in I}$ is a direct system
in $\mathcal{H}_\mathbf{t}$, then the induced morphism $\varinjlim
H^k(M_i)\longrightarrow H^k(\varinjlim_\mathcal{H_\mathbf{t}}M_i)$
is an epimorphism, for $k=-1$, and an isomorphism, for $k\neq -1$.

\item If $(F_i)_{i\in I}$ is a direct system in $\mathcal{F}$, there
is an isomorphism $(\frac{\varinjlim F_i}{t(\varinjlim
F_i)})[1]\cong\varinjlim_\mathcal{H_\mathbf{t}}(F_i[1])$ in
$\mathcal{H}_\mathbf{t}$.

\item If $(T_i)_{i\in I}$ is a direct system in $\mathcal{T}$, then
the following conditions hold true:

\begin{enumerate}
\item The induced morphism $\varinjlim_{\mathcal{H}_\mathbf{t}}(T_i[0])\longrightarrow (\varinjlim
T_i)[0]$ is an isomorphism in $\mathcal{H}_\mathbf{t}$;
\item The kernel of the colimit-defining morphism $f:\coprod_{i\leq j}T_{ij}\longrightarrow\coprod_{i\in
I}T_i$ is in $\mathcal{T}$.
\end{enumerate}

\end{enumerate}
\end{prop}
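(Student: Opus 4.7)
My plan is to prove the assertions in the order 3(a), (2), 3(b), (1), using two natural adjunctions for the heart as the main tool. Applying $\Hom_{\mathcal{D}(\mathcal{G})}(-, N)$ to the canonical triangle $H^{-1}(N)[1]\to N\to H^{0}(N)[0]\stackrel{+}{\longrightarrow}$ and invoking the vanishing of negative Ext groups between $\mathcal{T}$ and $\mathcal{F}$, one checks, for $T\in\mathcal{T}$, $F\in\mathcal{F}$ and $N\in\mathcal{H}_\mathbf{t}$, the natural isomorphisms $\Hom_{\mathcal{H}_\mathbf{t}}(T[0], N)\cong \Hom_{\mathcal{G}}(T, H^{0}(N))$ and $\Hom_{\mathcal{H}_\mathbf{t}}(F[1], N)\cong \Hom_{\mathcal{G}}(F, H^{-1}(N))$. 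Hence $(-)[0]\colon\mathcal{T}\to\mathcal{H}_\mathbf{t}$ is right adjoint to $H^{0}\colon\mathcal{H}_\mathbf{t}\to\mathcal{T}$, and $(-)[1]\colon\mathcal{F}\to\mathcal{H}_\mathbf{t}$ is left adjoint to $H^{-1}\colon\mathcal{H}_\mathbf{t}\to\mathcal{F}$.

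For 3(a), using that $\mathcal{T}$ is closed under direct limits in $\mathcal{G}$, the universal-property computation
\[
\Hom_{\mathcal{H}_\mathbf{t}}((\varinjlim T_i)[0], N)\cong \Hom_{\mathcal{G}}(\varinjlim T_i, H^{0}(N))\cong \varprojlim\Hom_{\mathcal{H}_\mathbf{t}}(T_i[0], N)
\]
identifies $(\varinjlim T_i)[0]$ with $\varinjlim_{\mathcal{H}_\mathbf{t}}T_i[0]$. Similarly, since $(-)[1]$ is a left adjoint it preserves colimits computed in $\mathcal{F}$, and $\mathcal{F}$ is reflective in $\mathcal{G}$ via the reflector $(1:t)$; so $\varinjlim_{\mathcal{F}}F_i = \varinjlim F_i/t(\varinjlim F_i)$, which yields (2).

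For 3(b), I would realise $\varinjlim_{\mathcal{H}_\mathbf{t}}T_i[0]$ as the cokernel in $\mathcal{H}_\mathbf{t}$ of the colimit-defining morphism $f\colon\coprod T_{ij}[0]\to\coprod T_i[0]$. Lemma \ref{lem.adjoints to the inclusions from heart} gives $\tilde{H}(M[1])=0$ for every $M\in\mathcal{H}_\mathbf{t}\subseteq\mathcal{U}_\mathbf{t}$, so the $\tilde{H}$-long exact sequence of the cone triangle $\coprod T_{ij}[0]\to\coprod T_i[0]\to\mathrm{Cone}(f)\stackrel{+}{\longrightarrow}$ collapses to
\[
0\to\tilde{H}(\mathrm{Cone}(f)[-1])\to\coprod T_{ij}[0]\stackrel{f}{\longrightarrow}\coprod T_i[0]\to\tilde{H}(\mathrm{Cone}(f))\to 0,
\]
whence $\varinjlim_{\mathcal{H}_\mathbf{t}}T_i[0]=\tilde{H}(\mathrm{Cone}(f))$. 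Since $\mathrm{Cone}(f)\in\mathcal{U}_\mathbf{t}$ has cohomology supported in degrees $-2, -1, 0$ with $H^{0}(\mathrm{Cone}(f))=\varinjlim T_i\in\mathcal{T}$, a direct analysis of the truncation to $\mathcal{W}_\mathbf{t}$ yields $H^{-1}(\tilde{H}(\mathrm{Cone}(f)))=H^{-1}(\mathrm{Cone}(f))/t(H^{-1}(\mathrm{Cone}(f)))=K/t(K)$, where $K=\ker f$ in $\mathcal{G}$. Comparison with 3(a) forces $K=t(K)\in\mathcal{T}$.

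For (1), the case $k=0$ follows because $H^{0}\colon\mathcal{H}_\mathbf{t}\to\mathcal{T}$ is a left adjoint and hence preserves colimits, while the cases $k\neq -1, 0$ are trivial. For $k=-1$, the same cone-triangle analysis applied to a general direct system $(M_i)$ in $\mathcal{H}_\mathbf{t}$ produces a short exact sequence $0\to\varinjlim H^{-1}(M_i)\to H^{-1}(\mathrm{Cone}(f))\to\ker H^{0}(f)\to 0$ in $\mathcal{G}$, where $H^{0}(f)$ is the colimit-defining morphism for the direct system $(H^{0}(M_i))_{i\in I}$ in $\mathcal{T}$; by 3(b), $\ker H^{0}(f)\in\mathcal{T}$. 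Applying the right exact reflector $(1:t)\colon\mathcal{G}\to\mathcal{F}$ annihilates the right-hand term, so the induced morphism
\[
\varinjlim H^{-1}(M_i)/t(\varinjlim H^{-1}(M_i))\twoheadrightarrow H^{-1}(\mathrm{Cone}(f))/t(H^{-1}(\mathrm{Cone}(f)))=H^{-1}(\varinjlim_{\mathcal{H}_\mathbf{t}}M_i)
\]
is an epimorphism, which factors the canonical map $\varinjlim H^{-1}(M_i)\to H^{-1}(\varinjlim_{\mathcal{H}_\mathbf{t}}M_i)$, proving the latter is epic. The main technical step I expect to need care is the identification $H^{-1}(\tilde{H}(\mathrm{Cone}(f)))=H^{-1}(\mathrm{Cone}(f))/t(H^{-1}(\mathrm{Cone}(f)))$ for cones of morphisms between heart objects: the key input is that the classical truncation $\tau^{\leq -2}(\mathrm{Cone}(f))$ lies in $\mathcal{U}_\mathbf{t}[1]$ and is thus killed by $\tau^{\mathcal{W}_\mathbf{t}}$, reducing the problem to the 2-term complex $\tau^{\geq -1}(\mathrm{Cone}(f))$.
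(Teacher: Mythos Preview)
There is a genuine gap. Your claimed natural isomorphism $\Hom_{\mathcal{H}_\mathbf{t}}(T[0], N)\cong \Hom_{\mathcal{G}}(T, H^{0}(N))$ is false in general. Applying $\Hom_{\mathcal{D}(\mathcal{G})}(T[0],-)$ to the triangle $H^{-1}(N)[1]\to N\to H^{0}(N)[0]\stackrel{+}{\to}$ produces the term $\Hom_{\mathcal{D}(\mathcal{G})}(T[0],H^{-1}(N)[1])=\Ext^{1}_{\mathcal{G}}(T,H^{-1}(N))$, which is \emph{not} a negative Ext group and need not vanish. Concretely, take $\mathcal{G}=\mathrm{Ab}$ with the usual (torsion, torsion--free) pair, $T=\mathbb{Z}/2\mathbb{Z}$ and $N=\mathbb{Z}[1]$: then $\Hom_{\mathcal{H}_\mathbf{t}}(T[0],N)=\Ext^{1}_{\mathrm{Ab}}(\mathbb{Z}/2\mathbb{Z},\mathbb{Z})\cong\mathbb{Z}/2\mathbb{Z}$ while $\Hom_{\mathcal{G}}(T,H^{0}(N))=0$. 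What \emph{is} true is the adjunction in the other variable, $\Hom_{\mathcal{H}_\mathbf{t}}(N,T[0])\cong\Hom_{\mathcal{G}}(H^{0}(N),T)$, i.e.\ $H^{0}\dashv(-)[0]$; so $(-)[0]$ is a right adjoint and preserves limits, not colimits. Your Yoneda computation for 3(a) uses precisely the false direction, so 3(a) is unproved; since your proofs of 3(b) and of assertion (1) for $k=-1$ both rest on 3(a), the whole chain collapses. (This is not a mere technicality: $\mathcal{T}[0]$ is the torsion\emph{free} class of the tilted torsion pair $(\mathcal{F}[1],\mathcal{T}[0])$ in $\mathcal{H}_\mathbf{t}$, so there is no formal reason for it to be closed under colimits.)

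The paper avoids this by reversing the order of dependence: it proves assertion (1) first, by factoring the colimit-defining morphism through its image $W$ in $\mathcal{H}_\mathbf{t}$ and analysing the long exact homology sequence of $0\to W\to\coprod M_i\to L\to 0$ (invoking \cite[Lemma 3.5]{CGM}); 3(a) then follows immediately since $\varinjlim H^{-1}(T_i[0])=0$ surjects onto $H^{-1}(\varinjlim_{\mathcal{H}_\mathbf{t}}T_i[0])$, and 3(b) from 3(a) via the cone, exactly as you do. Your cone-triangle analysis and the identification $H^{-1}(\tilde{H}(\mathrm{Cone}(f)))\cong H^{-1}(\mathrm{Cone}(f))/t(H^{-1}(\mathrm{Cone}(f)))$ are essentially the paper's ``claim'' and are correct; and your adjunction proof of (2) via $(-)[1]\dashv H^{-1}$ is valid and in fact cleaner than the paper's argument. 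But you need an independent proof of (1) (or a direct proof that $\Ker f\in\mathcal{T}$ in 3(b)) to close the circle.
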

\begin{proof}
1) It  essentially follows from \cite[Corollary 3.6]{CGM}, but, for
the sake of completeness, we give a short proof. By lemma
\ref{lem.exactness of the Hi}, we know that
$H^0:\mathcal{H_\mathbf{t}}\longrightarrow\mathcal{G}$ preserves
colimits, in particular direct limits. Let $f:\coprod_{i\leq
j}M_{ij}\longrightarrow\coprod_{i\in I}M_i$ be the associated
colimit-defining morphism and denote by $W$ the image of $f$ in
$\mathcal{H}_\mathbf{t}$. Applying the exact sequence of homology to
the exact sequence $0\rightarrow W\longrightarrow\coprod_{i\in
I}M_i\stackrel{p}{\twoheadrightarrow}L\rightarrow 0$, where
$L=\varinjlim_\mathcal{H_\mathbf{t}}M_i$,  and using \cite[Lemma
3.5]{CGM}, we readily see that we have a short exact sequence
$0\rightarrow H^{-1}(W)\longrightarrow \coprod_{i\in
I}H^{-1}(M_i)\stackrel{H^{-1}(p)}{\longrightarrow}H^{-1}(L)\rightarrow
0$. But the last arrow in this sequence is the composition

\begin{center}
$\coprod_{i\in I}H^{-1}(M_i)\twoheadrightarrow\varinjlim
H^{-1}(M_i)\stackrel{can}{\twoheadrightarrow}H^{-1}(\varinjlim_\mathcal{H_\mathbf{t}}M_i)$,
\end{center}
whose second arrow is then an epimorphism.

\vspace*{0.3cm}

In order to prove the remaining assertions, we first consider any
direct system $(M_i)_{i\in I}$ in $\mathcal{H}_\mathbf{t}$ and the
associated triangle given by the colimit-defining morphism:

\begin{center}
$\coprod_{i\leq j}M_{ij}\stackrel{f}{\longrightarrow}\coprod_{i\in
I}M_i\stackrel{q}{\longrightarrow} Z\stackrel{+}{\longrightarrow}$
\end{center}
in $\mathcal{D}(\mathcal{G})$. We claim that if $v:Z\longrightarrow
L:=\varinjlim_{\mathcal{H}_\mathbf{t}}(M_i)$ is a morphism fitting
in a triangle $N[1]\longrightarrow
Z\stackrel{v}{\longrightarrow}L\stackrel{+}{\longrightarrow}$, with
$N\in\mathcal{H}_\mathbf{t}$, then
 $H^{-1}(v)$ induces an isomorphism
$H^{-1}(Z)/t(H^{-1}(Z))\stackrel{\cong}{\longrightarrow}H^{-1}(L)$.

  By \cite{BBD}, we have a diagram:

$$\xymatrix{&&N[1] \ar[d] &\\ \coprod_{i\leq j} M_{ij} \ar[r]^{f} & \coprod_{i\in I} M_{i} \ar[r] & Z \ar[d] \ar[r]^{+} & \\ && L \ar[d]^{+} & \\ &&&&}$$

where the row and the column are triangles in
$\mathcal{D}(\mathcal{G})$. Then we have that
$N\cong\text{Ker}_\mathcal{H_\mathbf{t}}(f)$ and
$L\cong\text{Coker}_\mathcal{H_\mathbf{t}}(f)=\varinjlim_\mathcal{H_\mathbf{t}}M_i$.
From the sequence of homologies applied to the column, we get an
exact sequence

\begin{center}
$0\rightarrow H^0(N)\longrightarrow
H^{-1}(Z)\stackrel{g}{\longrightarrow}H^{-1}(L)\rightarrow 0$.
\end{center}
 It then follows that $H^0(N)\cong t(H^{-1}(Z))$ and
$H^{-1}(Z)/t(H^{-1}(Z))\cong H^{-1}(L)$.

We pass to prove the remaining assertions.

\vspace*{0.3cm}

2) Note that, by assertion 1, we have
$H^0(\varinjlim_\mathcal{H_\mathbf{t}}(F_i[1]))=0$. On the other
hand,  when taking $M_i=F_i[1]$ in the last paragraph, the complex
$Z$ can be identified with $\text{cone}(f)[1]$, where
$f:\coprod_{i\leq j}F_{ij}\longrightarrow\coprod_{i\in I}F_i$ is the
colimit-defining morphism. Then we  have $H^{-1}(Z)\cong
H^0(\text{cone}(f))=\text{Coker}(f)\cong\varinjlim F_{i}$, which, by
the last paragraph, implies that
$\varinjlim_\mathcal{H_\mathbf{t}}(F_i[1])\cong (\frac{\varinjlim
F_i}{t(\varinjlim F_i)})[1]$.

\vspace*{0.3cm}

3) a) From assertion 1) we get that $0=\varinjlim
H^{-1}(T_i[0])\longrightarrow
H^{-1}(\varinjlim_{\mathcal{H}_\mathbf{t}}(T_i[0]))$ is an
epimorphism. In particular we have an isomorphism
$\varinjlim_{\mathcal{H}_\mathbf{t}}(T_i[0])\cong
H^0(\varinjlim_{\mathcal{H}_\mathbf{t}}(T_i[0]))[0]$. But, by lemma
\ref{lem.exactness of the Hi}, $H^0$ preserves direct limits and
then the right term of this isomorphism is $(\varinjlim T_i)[0]$.

b) Let us consider the induced triangle $\coprod_{i\leq
j}T_{ij}\stackrel{f}{\longrightarrow}\coprod_{i\in
I}T_i\stackrel{q}{\longrightarrow}Z\stackrel{+}{\longrightarrow}$ in
$\mathcal{D}(\mathcal{G})$. Without loss of generality, we identify
$Z$ with the cone of $f$ in $\mathcal{C}(\mathcal{G})$, so that
$H^{-1}(Z)=\text{Ker}(f)$. But then, by the proved claim that we
made after proving assertion 1 and by the previous paragraph, we get
an isomorphism
$\frac{\text{Ker}(f)}{t(\text{Ker}(f))}\stackrel{\cong}{\longrightarrow}
H^{-1}(\varinjlim_{\mathcal{H}_\mathbf{t}}T_i[0])=0$.
\end{proof}

\begin{lemma} \label{lem.FFTT}
Let us assume that $\mathcal{H}_\mathbf{t}$ is an AB5 category and
let

\begin{center}

$0\rightarrow F_i\longrightarrow F'_i\stackrel{w_i}{\longrightarrow}
T_i\longrightarrow T'_i\rightarrow 0$,
\end{center}
be a direct system of exact sequences in $\mathcal{G}$, with
$F_i,F'_i\in\mathcal{F}$ and $T_i,T'_i\in\mathcal{T}$ for all $i\in
I$. Then the induced morphism $w:=\varinjlim w_i:\varinjlim
F'_i\longrightarrow\varinjlim T_i$ vanishes on $t(\varinjlim F'_i)$.
\end{lemma}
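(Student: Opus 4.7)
The plan is to package each four-term exact sequence into an object of the heart. First, for each $i$ I would form the two-term complex $M_i = (F'_i \xrightarrow{w_i} T_i)$ concentrated in degrees $-1$ and $0$; the hypotheses yield $H^{-1}(M_i) = F_i \in \mathcal{F}$ and $H^0(M_i) = T'_i \in \mathcal{T}$, so $M_i \in \mathcal{H}_\mathbf{t}$, and the given direct system of four-term sequences assembles the $M_i$ into a directed system in $\mathcal{H}_\mathbf{t}$. The canonical subcomplex inclusion $T_i[0] \hookrightarrow M_i$ (identity in degree $0$) produces a functorial short exact sequence in $\mathcal{H}_\mathbf{t}$,
$$0 \to T_i[0] \to M_i \to F'_i[1] \to 0,$$
whose connecting morphism $F'_i[1] \to T_i[1]$ in $\mathcal{D}(\mathcal{G})$ corresponds, under the identification $\text{Hom}_{\mathcal{D}(\mathcal{G})}(F'_i[1], T_i[1]) \cong \text{Hom}_\mathcal{G}(F'_i, T_i)$, to $w_i$ itself, as one reads off from the long exact cohomology sequence.

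Next, I would invoke the AB5 hypothesis to take $\varinjlim_{\mathcal{H}_\mathbf{t}}$ of this direct system of short exact sequences and, after identifying the outer terms via Proposition \ref{prop.direct limit under H0 and H1}(2), (3)(a), obtain the short exact sequence
$$0 \to (\varinjlim T_i)[0] \to L \to \bigl(\varinjlim F'_i/t(\varinjlim F'_i)\bigr)[1] \to 0$$
in $\mathcal{H}_\mathbf{t}$, where $L = \varinjlim_{\mathcal{H}_\mathbf{t}} M_i$. The connecting morphism of the associated triangle in $\mathcal{D}(\mathcal{G})$ then corresponds to a morphism $\delta: \varinjlim F'_i/t(\varinjlim F'_i) \to \varinjlim T_i$ in $\mathcal{G}$.

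For the last step I would invoke naturality: the canonical cocone $M_i \to L$ lifts to a morphism between the triangle $T_i[0] \to M_i \to F'_i[1] \to T_i[1]$ and the colimit triangle, and translating the resulting commutative square on connecting morphisms into $\mathcal{G}$ gives $\delta \circ \pi \circ \iota_i = \jmath_i \circ w_i$ for each $i$, where $\iota_i: F'_i \to \varinjlim F'_i$ and $\jmath_i: T_i \to \varinjlim T_i$ are the structural maps and $\pi: \varinjlim F'_i \twoheadrightarrow \varinjlim F'_i/t(\varinjlim F'_i)$ is the canonical projection. Passing to direct limits in the AB5 category $\mathcal{G}$ will then yield $w = \delta \circ \pi$ by the universal property of $\varinjlim F'_i$; since $\pi$ vanishes on $t(\varinjlim F'_i)$, so does $w$.

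The main subtlety is the correct identification of the connecting morphism at the colimit: the direct limit in $\mathcal{H}_\mathbf{t}$ of the stalk complexes $F'_i[1]$ is not the naive $(\varinjlim F'_i)[1]$ but the shifted quotient $(\varinjlim F'_i/t(\varinjlim F'_i))[1]$, per Proposition \ref{prop.direct limit under H0 and H1}(2), and it is precisely this discrepancy that forces $w$ to factor through $\pi$ and thereby to vanish on $t(\varinjlim F'_i)$.
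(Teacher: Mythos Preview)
Your argument is correct and takes a genuinely different route from the paper's.

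The paper proceeds by splitting each four-term sequence via $K_i=\text{Im}(w_i)$: it pulls back along $t(K_i)\hookrightarrow K_i$ to produce $\tilde{F}_i\subseteq F'_i$ and pushes out along $K_i\twoheadrightarrow K_i/t(K_i)$ to produce $\tilde{T}_i$, obtaining two separate short exact sequences
\[
0\to t(K_i)[0]\to F_i[1]\to \tilde{F}_i[1]\to 0,\qquad
0\to \tilde{T}_i[0]\to T'_i[0]\to (K_i/t(K_i))[1]\to 0
\]
in $\mathcal{H}_\mathbf{t}$. After taking direct limits (AB5) and invoking Proposition~\ref{prop.direct limit under H0 and H1}, the paper extracts that $w$ vanishes on $t(\varinjlim\tilde{F}_i)$, that $t(\varinjlim F_i)=(\varinjlim F_i)\cap t(\varinjlim\tilde{F}_i)$, and that $t(\varinjlim K_i)=\varinjlim t(K_i)$; a final bicartesian-square argument then shows $t(\varinjlim F'_i)\subseteq t(\varinjlim\tilde{F}_i)$.

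Your approach is more economical: you encode the entire four-term sequence as the single complex $M_i=(F'_i\xrightarrow{w_i}T_i)\in\mathcal{H}_\mathbf{t}$, use just one short exact sequence $0\to T_i[0]\to M_i\to F'_i[1]\to 0$ (whose connecting morphism is $\pm w_i[1]$, since $M_i$ is the cone of $w_i[0]$), and pass to the direct limit once. The factorisation $w=\delta\circ\pi$ then drops out from naturality of the connecting morphism, which is legitimate because a morphism of short exact sequences in the heart corresponds, under $\text{Ext}^1_{\mathcal{H}_\mathbf{t}}(C,A)\cong\text{Hom}_{\mathcal{D}(\mathcal{G})}(C,A[1])$, to the commutativity $f[1]\circ\partial=\partial'\circ h$. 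What you gain is brevity and conceptual clarity; what the paper's argument buys is that it stays entirely at the level of objects and short exact sequences, never invoking the identification of the connecting map or its naturality, which some readers may find more elementary.
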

\begin{proof}
Put $K_i:=\text{Im}(w_i)$ and consider the following pullback
diagram and pushout diagram, respectively:

$$\xymatrix{0 \ar[r] & F_{i} \ar[r] \ar@{=}[d] & \tilde{F}_{i} \ar[r] \ar@{^{(}->}[d]  \pushoutcorner & t(K_{i}) \ar[r] \ar@{^{(}->}[d] & 0 \\ 0 \ar[r] & F_{i} \ar[r] & F'_{i} \ar[r] & K_{i} \ar[r] & 0 }$$
$$\xymatrix{0 \ar[r] & K_{i} \ar[r] \ar@{>>}[d] & T_{i} \ar[r] \ar[d]  & T_{i}' \ar[r] \ar@{=}[d] & 0 \\ 0 \ar[r] & \frac{K_i}{t(K_i)} \ar[r] & \tilde{T}_{i} \ar[r] \pullbackcorner & T_{i}' \ar[r] & 0 }$$

The top row of the first diagram and the bottom row of the second
one give exact sequences in $\mathcal{H}_\mathbf{t}$

\begin{center}
$0\rightarrow t(K_i)[0]\longrightarrow
F_i[1]\longrightarrow\tilde{F}_i[1]\rightarrow 0$

$0\rightarrow\tilde{T}_i[0]\longrightarrow T'_i[0]\longrightarrow
K_i/t(K_i)[1]\rightarrow 0$
\end{center}
which give rise to direct systems of short exact sequences in
$\mathcal{H}_\mathbf{t}$. Using now
 the AB5 condition of $\mathcal{H}_\mathbf{t}$ and proposition \ref{prop.direct limit under H0 and H1}, we get exact sequences in $\mathcal{H}_\mathbf{t}$:

 \begin{center}
 $0\rightarrow(\varinjlim t(K_i))[0])\longrightarrow\frac{\varinjlim F_i}{t(\varinjlim F_i)}[1]
 \longrightarrow\frac{\varinjlim\tilde{F}_i}{t(\varinjlim\tilde{F}_i)}[1]\rightarrow 0$

 $0\rightarrow (\varinjlim\tilde{T}_i)[0]\longrightarrow (\varinjlim T'_i)[0]
 \longrightarrow\frac{\varinjlim (K_i/t(K_i))}{t(\varinjlim (K_i/t(K_i)))}[1]\rightarrow
 0$,
 \end{center}
 which necessarily come from exact sequences in $\mathcal{G}$:

 \begin{center}
 $0\rightarrow\frac{\varinjlim F_i}{t(\varinjlim F_i)}\longrightarrow
 \frac{\varinjlim\tilde{F}_i}{t(\varinjlim\tilde{F}_i)}\longrightarrow\varinjlim t(K_i)\rightarrow 0$

 $0\rightarrow\frac{\varinjlim (K_i/t(K_i))}{t(\varinjlim (K_i/t(K_i)))}\longrightarrow\varinjlim\tilde{T}_i\longrightarrow\varinjlim T'_i\rightarrow
 0$.
 \end{center}
 These sequences are obviously induced from applying the direct limit functor to the top row of the first
 diagram and the bottom row of the second diagram above, respectively. With the
 obvious abuse of notation of viewing monomorphisms as inclusions,
 we get that $w:=\varinjlim w_i$ vanishes on $t(\varinjlim\tilde{F}_i)$, that $t(\varinjlim F_i)=(\varinjlim F_i)\cap
 t(\varinjlim\tilde{F}_i)$ and that $t(\varinjlim (K_i/t(K_i)))=0$.
 This last condition in turn implies that $\varinjlim t(K_i)=t(\varinjlim
 K_i)$ since we have an exact sequence $0\rightarrow\varinjlim t(K_i)\longrightarrow\varinjlim K_i\longrightarrow\varinjlim (K_i/t(K_i))\rightarrow
 0$.

 We finally prove that $w(t(\varinjlim F'_i))=0$. Note that
 $\text{Ker}(w)=\text{Ker}(p)$, where $p:\varinjlim F'_i\longrightarrow\varinjlim
 K_i$ is the induced map. Consider now the following bicartesian
 square:

$$\xymatrix{\varinjlim{\tilde{F}_i} \ar[r] \ar@{^{(}->}[d]  \pushoutcorner & \varinjlim{t(K_i)} \ar@{^{(}->}[d]^{\lambda}  \\ \varinjlim{F_{i}'} \ar[r]^{p} & \varinjlim{K_i} \pullbackcorner}$$

Due to the fact that $t(\varinjlim K_i)=\varinjlim t(K_i)$, we have
a unique morphism $\alpha : t(\varinjlim
F'_i)\longrightarrow\varinjlim t(K_i)$ such that $\lambda\circ\alpha
=p_{|t(\varinjlim (F'_i))}$. By the universal property of cartesian
squares, we get a  morphism $u:t(\varinjlim
F'_i)\longrightarrow\varinjlim\tilde{F}_i$ such that the composition
$t(\varinjlim
F'_i)\stackrel{u}{\longrightarrow}\varinjlim\tilde{F}_i\hookrightarrow\varinjlim
F'_i$ is the canonical inclusion. It follows that $u$ is a
monomorphism. Viewing $u$ as in inclusion,  we then have that
$t(\varinjlim F'_i)\subseteq t(\varinjlim \tilde{F}_i)$, and we have
 already seen that $w$ (and hence $p$) vanishes on $t(\varinjlim
 \tilde{F}_i)$.
\end{proof}

Last lemma will be fundamental to prove that the  closure of $\mathcal{F}$ under taking direct limits is a necessary condition for $\mathcal{H}_\mathbf{t}$ to be AB5. We now want to know the information that that closure property gives about the existence of a generator in $\mathcal{H}_\mathbf{t}$. This requires a few preliminary lemmas.

\begin{lemma} \label{lem.H-direct limit of system in CG}
Suppose that $\mathcal{F}$ is closed under taking direct limits in
$\mathcal{G}$. Let $(M_i)_{i\in I}$ be a direct system in
$\mathcal{C}(\mathcal{G})$, where $M_i$ is a complex concentrated in
degrees $-1,0$, for all $i\in I$. If $M_i\in\mathcal{H}_\mathbf{t}$,
for all $i\in I$, then the canonical morphism

\begin{center}
$\varinjlim_\mathcal{H_\mathbf{t}}M_i\longrightarrow\varinjlim_{\mathcal{C}(\mathcal{G})}M_i$
\end{center}
is an isomorphism in $\mathcal{D}(\mathcal{G})$.
\end{lemma}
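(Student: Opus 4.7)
My plan is to identify $N:=\varinjlim_{\mathcal{C}(\mathcal{G})}M_i$ with $\varinjlim_{\mathcal{H}_\mathbf{t}}M_i$ by first producing a canonical comparison morphism inside $\mathcal{H}_\mathbf{t}$ and then checking that it is a quasi-isomorphism via Proposition \ref{prop.direct limit under H0 and H1}. To start, I would note that direct limits in $\mathcal{C}(\mathcal{G})$ are computed degreewise and, since $\mathcal{G}$ is AB5, they commute with kernels and cokernels; hence $N$ is again a complex concentrated in degrees $-1,0$ with $H^{k}(N)\cong\varinjlim H^{k}(M_i)$ for $k=-1,0$ and $H^{k}(N)=0$ otherwise. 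Using that $\mathcal{T}$ is always closed under direct limits in $\mathcal{G}$ and that $\mathcal{F}$ is so by hypothesis, one obtains $N\in\mathcal{H}_\mathbf{t}$.

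Since the structural chain maps $M_i\longrightarrow N$ are morphisms in $\mathcal{H}_\mathbf{t}$ compatible with the transition maps of the direct system, the universal property of the colimit in $\mathcal{H}_\mathbf{t}$ yields a canonical morphism $\rho:\varinjlim_{\mathcal{H}_\mathbf{t}}M_i\longrightarrow N$, and it suffices to show that $\rho$ is a quasi-isomorphism since both its source and target lie in $\mathcal{H}_\mathbf{t}$. For each $k\in\{-1,0\}$, functoriality of $H^{k}$ and the definitions yield a commutative triangle
$$\xymatrix{\varinjlim H^{k}(M_i) \ar[r]^-{\alpha_k} \ar[dr]_{\beta_k} & H^{k}(\varinjlim_{\mathcal{H}_\mathbf{t}}M_i) \ar[d]^{H^{k}(\rho)} \\ & H^{k}(N)}$$
in which $\alpha_k$ is the canonical morphism of Proposition \ref{prop.direct limit under H0 and H1}(1) and $\beta_k$ is the isomorphism $\varinjlim H^{k}(M_i)\stackrel{\cong}{\longrightarrow}H^{k}(N)$ obtained in the first paragraph.

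For $k=0$, Proposition \ref{prop.direct limit under H0 and H1}(1) already tells us that $\alpha_0$ is an isomorphism, so $H^{0}(\rho)=\beta_0\circ\alpha_0^{-1}$ is one as well. The key, and subtlest, case is $k=-1$, where the same proposition only guarantees that $\alpha_{-1}$ is an epimorphism; however, the relation $H^{-1}(\rho)\circ\alpha_{-1}=\beta_{-1}$ with $\beta_{-1}$ an isomorphism forces $\alpha_{-1}$ to be simultaneously epic and monic in $\mathcal{G}$, hence an isomorphism, and consequently $H^{-1}(\rho)=\beta_{-1}\circ\alpha_{-1}^{-1}$ is an isomorphism too. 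Therefore $\rho$ is a quasi-isomorphism, and so an isomorphism in $\mathcal{D}(\mathcal{G})$. The only delicate point I anticipate is packaging this trick correctly: the chain-level model $N$ is precisely what supplies the extra isomorphism $\beta_{-1}$ needed to upgrade the a priori merely epimorphic $\alpha_{-1}$ of Proposition \ref{prop.direct limit under H0 and H1} to an isomorphism.
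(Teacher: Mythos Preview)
Your proof is correct and follows essentially the same route as the paper's own argument: both recognize that $N=\varinjlim_{\mathcal{C}(\mathcal{G})}M_i$ lies in $\mathcal{H}_\mathbf{t}$, obtain the comparison map $\rho$ from the universal property, factor the isomorphism $\varinjlim H^k(M_i)\cong H^k(N)$ through $H^k(\varinjlim_{\mathcal{H}_\mathbf{t}}M_i)$, and use Proposition~\ref{prop.direct limit under H0 and H1}(1) together with the factorization to upgrade the epimorphism $\alpha_{-1}$ to an isomorphism. Your write-up is in fact a bit more explicit about the ``epi plus iso composite implies iso'' trick than the paper's, but the underlying argument is identical.
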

\begin{proof}
Note that $\varinjlim_{\mathcal{C}(\mathcal{G})}M_i$ is a complex of
$\mathcal{H}_{\mathbf{t}}$ and, hence, there is a canonical morphism
$g:\varinjlim_{\mathcal{H}_{\mathbf{t}}}M_i\longrightarrow\varinjlim_{\mathcal{C}(\mathcal{G})}M_i$
as indicated in the statement. We then get a composition of
morphisms in $\mathcal{G}$:

\begin{center}
$\varinjlim H^k(M_i)\longrightarrow
H^k(\varinjlim_\mathcal{H_\mathbf{t}}M_i)\stackrel{H^k(g)}{\longrightarrow}H^k(\varinjlim_{\mathcal{C}(\mathcal{G})}M_i)$,
\end{center}
for each $k\in\mathbb{Z}$. But all complexes involved have homology
concentrated in degrees $-1,0$ and, by exactness of $\varinjlim$ in
$\mathcal{G}$, we know that the last composition of morphisms is an
isomorphism. By proposition \ref{prop.direct limit under H0 and H1},
we know that the first arrow in the composition is an isomorphism,
for $k=0$, and an epimorphism, for $k=-1$. Then both arrows in the
composition are isomorphisms, for all $k\in\mathbb{Z}$, and so $g$
is an isomorphism in $\mathcal{D}(\mathcal{G})$.
\end{proof}

\begin{lemma} \label{lem.epimorphism in H through homologies}
Let $p:M\longrightarrow N$ be a morphism in $\mathcal{H}_\mathbf{t}$
such that $H^0(p)$ and $H^{-1}(p)$ are epimorphisms 
 in $\mathcal{G}$ and $\text{Ker}(H^0(p))$ is in $\mathcal{T}$. Then $p$ is an epimorphism in
$\mathcal{H}_\mathbf{t}$.
\end{lemma}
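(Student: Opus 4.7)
The plan is to complete $p$ to a distinguished triangle $M\stackrel{p}{\longrightarrow}N\stackrel{q}{\longrightarrow}C\stackrel{+}{\longrightarrow}$ in $\mathcal{D}(\mathcal{G})$ and to show that the mapping cone $C$ lies in $\mathcal{U}_\mathbf{t}[1]$. Once this is achieved, axiom (i) of the t-structure, applied after a shift, gives $\Hom_{\mathcal{D}(\mathcal{G})}(\mathcal{U}_\mathbf{t}[1],\mathcal{U}_\mathbf{t}^{\perp}[1])=0$, and since $\mathcal{H}_\mathbf{t}\subseteq\mathcal{U}_\mathbf{t}^{\perp}[1]$ we deduce $\Hom_{\mathcal{D}(\mathcal{G})}(C,Z)=0$ for every $Z\in\mathcal{H}_\mathbf{t}$. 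A standard triangulated factorization argument then converts this vanishing into the assertion that $p$ is an epimorphism in the heart.

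To locate $C$, the first step is to run the usual long exact homology sequence of the triangle in $\mathcal{G}$, remembering that $H^k(M)=H^k(N)=0$ for $k\notin\{-1,0\}$ since $M,N\in\mathcal{H}_\mathbf{t}$. The hypothesis that $H^0(p)$ is an epimorphism immediately gives $H^0(C)=0$, which together with the vanishing of $H^k(M)$ and $H^k(N)$ for $k\geq 1$ forces $H^k(C)=0$ for all $k\geq 1$; thus $C\in\mathcal{D}^{\leq -1}(\mathcal{G})$. Next, since $H^{-1}(p)$ is an epimorphism, the connecting arrow $H^{-1}(N)\longrightarrow H^{-1}(C)$ is zero, so $H^{-1}(C)\cong\Ker(H^0(p))$, which lies in $\mathcal{T}$ by hypothesis. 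In view of the description $\mathcal{U}_\mathbf{t}=\{X\in\mathcal{D}^{\leq 0}(\mathcal{G}):H^0(X)\in\mathcal{T}\}$ recalled in Example \ref{exems.two canonical ones}(2), this pair of conditions on $C$ is exactly the statement that $C\in\mathcal{U}_\mathbf{t}[1]$.

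For the final step, take any $Z\in\mathcal{H}_\mathbf{t}$ and any morphism $g:N\longrightarrow Z$ with $g\circ p=0$. The triangle yields a factorization $g=h\circ q$ for some $h:C\longrightarrow Z$, and the previous paragraph gives $h=0$ because $\Hom_{\mathcal{D}(\mathcal{G})}(C,Z)=0$; hence $g=0$, and $p$ is an epimorphism in $\mathcal{H}_\mathbf{t}$. I do not foresee a real obstacle in this argument; the only subtlety is keeping careful track of the shifts, so that the combined hypotheses on $H^{-1}(p)$ and $H^0(p)$ are translated correctly into membership of the cone in $\mathcal{U}_\mathbf{t}[1]$ rather than merely in $\mathcal{U}_\mathbf{t}$.
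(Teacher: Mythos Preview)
Your argument is correct and follows essentially the same route as the paper: both complete $p$ to a triangle and analyze the homology of the cone via the long exact sequence. The only difference is in the last step: the paper observes additionally that $H^{-2}(C)\cong\Ker(H^{-1}(p))\in\mathcal{F}$ (being a subobject of $H^{-1}(M)$), so that $C[-1]\in\mathcal{H}_\mathbf{t}$ and one obtains directly a short exact sequence $0\to C[-1]\to M\stackrel{p}{\to}N\to 0$ in the heart, whereas you stop at $C\in\mathcal{U}_\mathbf{t}[1]$ and conclude via orthogonality. Your version uses slightly less information; the paper's version is a touch more informative since it identifies the kernel of $p$ in $\mathcal{H}_\mathbf{t}$ explicitly.
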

\begin{proof}
Let us consider the induced triangle $M\stackrel{p}{\longrightarrow}N\longrightarrow W\stackrel{+}{\longrightarrow}$ in $\mathcal{D}(\mathcal{G})$. The long exact sequence of homologies and the hypotheses give that $H^{-2}(W)\cong\text{Ker}(H^{-1}(p))$ is in $\mathcal{F}$, that $H^{-1}(W)\cong\text{Ker}(H^0(p))$ is in $\mathcal{T}$ and that $H^i(W)=0$, for all $i\neq -2,-1$. It follows that $W[-1]\in\mathcal{H}_\mathbf{t}$, so that we get a short exact sequence $0\rightarrow W[-1]\longrightarrow M\stackrel{p}{\longrightarrow}N\rightarrow 0$ in $\mathcal{H}_\mathbf{t}$. 
\end{proof}

\begin{lemma} \label{lem.F=limF implica T=Pres(V)}
Let $\mathbf{t}=(\mathcal{T},\mathcal{F})$ be a torsion pair such
that $\mathcal{F}$ is closed under taking direct limits in
$\mathcal{G}$. Then there is an object $V$ such that
$\mathcal{T}=\text{Pres}(V)$. Moreover, the torsion pairs such that
$\mathcal{F}$ is closed under taking direct limits in $\mathcal{G}$
form a set.
\end{lemma}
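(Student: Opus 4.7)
The plan is to manufacture a single object $V_{\mathbf{t}}\in\mathcal{T}$ out of a fixed generator $G$ of $\mathcal{G}$ that presents the whole of $\mathcal{T}$, and then to observe that the choice of $V_{\mathbf{t}}$ ranges over a set. First, I would prove that the hypothesis forces the torsion radical $t:\mathcal{G}\longrightarrow\mathcal{G}$ to preserve direct limits: applying the exact direct-limit functor to the canonical torsion sequences $0\to t(X_i)\to X_i\to X_i/t(X_i)\to 0$ yields an exact sequence whose outer terms lie in $\mathcal{T}$ (since torsion classes are always closed under direct limits) and in $\mathcal{F}$ (by hypothesis), respectively, and uniqueness of the torsion decomposition of $\varinjlim X_i$ gives $t(\varinjlim X_i)\cong\varinjlim t(X_i)$.

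Fix now a generator $G$ of $\mathcal{G}$ and let $\Sigma_0$ be a set of representatives, up to isomorphism, of all subobjects of quotients of $G^n$ as $n$ ranges over $\mathbb{N}$. This is a set because $\mathcal{G}$ is well-powered. Define $\Sigma_{\mathbf{t}}:=\{t(Q):Q\text{ is a quotient of some }G^n\}\subseteq\Sigma_0$ and $V_{\mathbf{t}}:=\coprod_{S\in\Sigma_{\mathbf{t}}}S\in\mathcal{T}$. The crucial step is to prove $\mathcal{T}=\text{Pres}(V_{\mathbf{t}})$. The inclusion $\text{Pres}(V_{\mathbf{t}})\subseteq\mathcal{T}$ is immediate since $V_{\mathbf{t}}\in\mathcal{T}$ and $\mathcal{T}$ is closed under coproducts and cokernels. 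For the converse, given $T\in\mathcal{T}$, write $T$ as the directed union of its finitely generated subobjects $T=\varinjlim T_F$, each $T_F$ being a quotient of some $G^n$. The first step gives $T=t(T)=\varinjlim t(T_F)$ with $t(T_F)\in\Sigma_{\mathbf{t}}$, whence $T$ is the cokernel of a morphism $f:B\to C$ whose source and target are coproducts of objects of $\Sigma_{\mathbf{t}}$ and hence lie in $\text{Add}(V_{\mathbf{t}})$. Writing $B\oplus B'\cong V_{\mathbf{t}}^{(K)}$ and $C\oplus C'\cong V_{\mathbf{t}}^{(I)}$, the morphism $\hat{f}:V_{\mathbf{t}}^{(K)}\oplus V_{\mathbf{t}}^{(I)}\longrightarrow V_{\mathbf{t}}^{(I)}$ sending $(b,b',c,c')\mapsto (f(b),c')$ has image $f(B)\oplus C'$ and thus cokernel $(C\oplus C')/(f(B)\oplus C')\cong T$, proving $T\in\text{Pres}(V_{\mathbf{t}})$.

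For the final assertion, the assignment $\mathbf{t}\mapsto\Sigma_{\mathbf{t}}$ injects the class of torsion pairs under consideration into the power set $\mathcal{P}(\Sigma_0)$, which is a set. Indeed, $\Sigma_{\mathbf{t}}$ determines $V_{\mathbf{t}}$, which determines $\mathcal{T}=\text{Pres}(V_{\mathbf{t}})$, and then $\mathcal{F}=\{F\in\mathcal{G}:\text{Hom}_\mathcal{G}(T,F)=0\text{ for all }T\in\mathcal{T}\}$ is recovered from $\mathcal{T}$. The main point of care is the padding step, that is, converting a cokernel presentation with source and target in $\text{Add}(V_{\mathbf{t}})$ into one with source and target of the form $V_{\mathbf{t}}^{(?)}$; this is however elementary once the construction of $V_{\mathbf{t}}$ is in place.
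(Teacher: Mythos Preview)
Your proof is correct and follows essentially the same route as the paper's: fix a generator $G$, show that every $T\in\mathcal{T}$ is a direct limit of torsion parts $t(T_F)$ of its finitely generated subobjects (using that $\mathcal{F}=\varinjlim\mathcal{F}$ forces $\varinjlim(T_F/t(T_F))\in\mathcal{T}\cap\mathcal{F}=0$), take $V$ to be the coproduct over a set of representatives of such small torsion objects, and deduce that torsion pairs of this kind are parametrized by subsets of a fixed set. Your preliminary observation that $t$ preserves direct limits is exactly the paper's computation stated once and for all, and your explicit padding argument to pass from an $\text{Add}(V_\mathbf{t})$-presentation to a $V_\mathbf{t}^{(?)}$-presentation makes precise a step the paper leaves implicit; your set $\Sigma_\mathbf{t}$ is slightly smaller than the paper's $\mathcal{S}$ (which allows all torsion subobjects of quotients of $G^n$, not just torsion radicals of such quotients), but this makes no difference to the argument.
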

\begin{proof}
Let us fix a generator $G$ of $\mathcal{G}$. Then each object of
$\mathcal{G}$ is a directed union of those of its subobjects which
are isomorphic to quotients $G^{(n)}/X$, where $n$ is a natural
number and $X$ is a subobject of $G^{(n)}$. Let now take
$T\in\mathcal{T}$ and express it as a directed union
$T=\bigcup_{i\in I}U_i$, where $U_i\cong\frac{G^{(n_i)}}{X_i}$, for
some integer $n_i>0$ and some subobject $X_i$ of $G^{(n_i)}$. We now
get an $I$-directed system of exact sequences

\begin{center}
$0\rightarrow t(U_i)\hookrightarrow U_i\longrightarrow
U_i/t(U_i)\rightarrow 0$.
\end{center}
Due to the AB5 condition of $\mathcal{G}$, after taking direct
limits,  we get an exact sequence

\begin{center}
$0\rightarrow\varinjlim t(U_i)\longrightarrow
T\longrightarrow\varinjlim\frac{U_i}{t(U_i)}\rightarrow 0$.
\end{center}
It follows that
$\varinjlim\frac{U_i}{t(U_i)}\in\mathcal{T}\cap\mathcal{F}=0$ since
$\mathcal{F}$ is closed under taking direct limits in $\mathcal{F}$.
Then we have $T=\bigcup_{i\in I}t(U_i)$.

The objects $T'\in\mathcal{T}$ which are isomorphic to subobjects of
quotients $G^n/X$ form a skeletally small subcategory. We take a set
$\mathcal{S}$ of its representatives, up to isomorphism, and put
$V=\coprod_{S\in\mathcal{S}}S$. The previous paragraph shows that
each $T\in\mathcal{T}$ is isomorphic to a direct limit of objects in
$\mathcal{S}$, from which we get that
$\mathcal{T}\subseteq\text{Pres}(V)$. The converse inclusion is
clear.

For the final statement, note that the last paragraph shows that the
assignment $\mathbf{t}\rightsquigarrow\mathcal{S}$ gives an
injective map from the class of torsion pairs $\mathbf{t}$ such that
$\mathcal{F}=\varinjlim\mathcal{F}$ to the set of subsets of
$\bigcup_{n\in\mathbb{N},X<G^n}\mathcal{S}(G^n/X)$, where
$\mathcal{S}(M)$ denotes the set of subobjects of $M$, for each
object $M$.
\end{proof}

We can now give the desired information on the existence of a generator in $\mathcal{H}_\mathbf{t}$. Recall that a \emph{subquotient} of an object $X$ of $\mathcal{G}$ is  a quotient $Y/Z$, where $Z\subseteq Y$ are subobjects of $X$. Note that these subquotients form a set, for each object $X$ in $\mathcal{G}$ (see \cite[Proposition IV.6.6]{S}).

\begin{prop} \label{prop.limF=F implies generator}
Let $\mathbf{t}=(\mathcal{T},\mathcal{F})$ be a torsion pair in $\mathcal{G}$ such that $\mathcal{F}$ is closed under taking direct limits. Then the heart $\mathcal{H}_\mathbf{t}$ has a generator.  
\end{prop}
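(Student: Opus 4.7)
The plan is to build a generator of $\mathcal{H}_\mathbf{t}$ as a coproduct $G_\mathbf{t}:=\coprod_{N\in\mathcal{X}}N$ indexed by a suitable set $\mathcal{X}$ of ``small'' heart objects. First I would fix a generator $\Gamma$ of $\mathcal{G}$ and, mimicking the argument in the proof of Lemma~\ref{lem.F=limF implica T=Pres(V)}, extract a set $\mathcal{S}_\mathcal{T}$ of representatives of the torsion subobjects of the quotients $\Gamma^n/Z$; by construction, every $T\in\mathcal{T}$ is a directed union of subobjects from $\mathcal{S}_\mathcal{T}$. Since $\mathcal{F}$ is closed under subobjects in any torsion pair, the dual argument produces a set $\mathcal{S}_\mathcal{F}$ of torsionfree quotients of the $\Gamma^n$'s such that every $F\in\mathcal{F}$ is a directed union of subobjects from $\mathcal{S}_\mathcal{F}$.

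Next I would let $\mathcal{X}$ be a set of representatives of iso classes of $N\in\mathcal{H}_\mathbf{t}$ satisfying $H^{-1}(N)\in\mathcal{S}_\mathcal{F}\cup\{0\}$ and $H^0(N)\in\mathcal{S}_\mathcal{T}\cup\{0\}$; this really is a set, because for each pair $(T_0,F_0)$ the iso classes of extensions of $T_0[0]$ by $F_0[1]$ in $\mathcal{H}_\mathbf{t}$ are parameterized by $\text{Ext}^2_\mathcal{G}(T_0,F_0)$. Given $M\in\mathcal{H}_\mathbf{t}$ represented as $[X\stackrel{d}{\longrightarrow}Y]$ in degrees $-1,0$ with $F:=\text{Ker}(d)$, $T:=\text{Coker}(d)$ and $p\colon Y\twoheadrightarrow T$, I would first ``chop down'' $T$: writing $T=\bigcup_\alpha T_\alpha$ with $T_\alpha\in\mathcal{S}_\mathcal{T}$, the subcomplexes $M_\alpha:=[X\to p^{-1}(T_\alpha)]$ lie in $\mathcal{H}_\mathbf{t}$ (their kernel is $F$ and cokernel is $T_\alpha$), sit as $\mathcal{H}_\mathbf{t}$-subobjects of $M$ with $\mathcal{C}(\mathcal{G})$-cokernels the stalks $(T/T_\alpha)[0]\in\mathcal{H}_\mathbf{t}$, and satisfy $M=\varinjlim_{\mathcal{H}_\mathbf{t}}M_\alpha$ by Lemma~\ref{lem.H-direct limit of system in CG}. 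The second step approximates each $M_\alpha$ by extensions of $T_\alpha[0]$ by $F_\beta[1]$'s ($F_\beta\in\mathcal{S}_\mathcal{F}$ covering $F$), relying on the formula $F[1]\cong\varinjlim_{\mathcal{H}_\mathbf{t}}F_\beta[1]$ from Proposition~\ref{prop.direct limit under H0 and H1}(2), which holds because $\mathcal{F}$ is closed under direct limits.

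The main obstacle is this second step. The extension class that defines $M_\alpha$ lies in $\text{Ext}^2_\mathcal{G}(T_\alpha,F)$, and since $\text{Ext}^2_\mathcal{G}(T_\alpha,-)$ is not known to commute with the direct limit $F=\varinjlim F_\beta$ (no compactness being assumed on $T_\alpha$), one cannot simply pull the class back into some $\text{Ext}^2_\mathcal{G}(T_\alpha,F_\beta)$. Instead, the approximating $\mathcal{X}$-subobjects of $M_\alpha$ have to be exhibited intrinsically, as subcomplexes $[X'\to Y']$ of $[X\to p^{-1}(T_\alpha)]$ whose cohomologies $X'\cap F$ and $Y'/d(X')$ are pre-fixed to lie in $\mathcal{S}_\mathcal{F}$ and $\mathcal{S}_\mathcal{T}$; their $\mathcal{H}_\mathbf{t}$-direct limit is then identified with $M_\alpha$ by another application of Lemma~\ref{lem.H-direct limit of system in CG}, yielding in the end a canonical $\mathcal{H}_\mathbf{t}$-epimorphism $G_\mathbf{t}^{(I)}\twoheadrightarrow M$.
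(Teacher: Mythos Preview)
Your overall strategy---build a skeletally small class $\mathcal{X}$ of heart objects with ``small'' $H^{-1}$ and $H^0$, then show every $M\in\mathcal{H}_\mathbf{t}$ is a quotient of a coproduct from $\mathcal{X}$---coincides with the paper's, and your first reduction (chopping down $H^0$ via the subcomplexes $M_\alpha=[X\to p^{-1}(T_\alpha)]$, together with Lemma~\ref{lem.H-direct limit of system in CG}) is correct. However, the paper carries out the two reductions in the opposite order and with a different mechanism, and this matters precisely for the step you flag as the obstacle.

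The paper first reduces the \emph{degree $-1$ entry} rather than $H^{-1}$: it picks an epimorphism $G^{(J)}\twoheadrightarrow M^{-1}$, forms $K_J=[G^{(J)}\to M^0]$, and uses Lemma~\ref{lem.epimorphism in H through homologies} to see that $L(K_J)\to M$ is an epimorphism in $\mathcal{H}_\mathbf{t}$; then $L(K_J)=\varinjlim_{\mathcal{H}_\mathbf{t}}L(K_F)$ over finite $F\subseteq J$, each $L(K_F)$ having $H^{-1}$ a subquotient of some $G^m$. Only afterwards does it treat $H^0$, and here it exploits Lemma~\ref{lem.F=limF implica T=Pres(V)} in a way you do not: since $\mathcal{T}=\text{Pres}(V)$, there is an epimorphism $V^{(S)}\twoheadrightarrow H^0(M)$ with kernel in $\mathcal{T}$, giving an epimorphism $\hat M\twoheadrightarrow M$ in $\mathcal{H}_\mathbf{t}$ with $H^0(\hat M)=V^{(S)}$; restricting to finite $F\subset S$ then lands in the target class. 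Both reductions proceed via \emph{epimorphisms onto} $M$, so the $\text{Ext}^2$ lifting obstruction you identify simply never appears.

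Your second step, by contrast, is genuinely incomplete as written. For a subcomplex $[X'\to Y']$ of $M_\alpha$ to lie in $\mathcal{H}_\mathbf{t}$ one already needs $Y'/d(X')\in\mathcal{T}$, and since $\mathcal{T}$ is not closed under subobjects this is not automatic for arbitrary $X'\subseteq X$, $Y'\subseteq Y_\alpha$; you have not said how to produce a directed family of such pairs with $\varinjlim Y'=Y_\alpha$. The idea can be salvaged: for $X'\subseteq X$ a quotient of $\Gamma^n$, take $Y'$ to be the preimage in $Y_\alpha$ of $t(Y_\alpha/d(X'))$; then $[X'\to Y']\in\mathcal{H}_\mathbf{t}$ with $H^{-1}=X'\cap F$ small, and closure of $\mathcal{F}$ under direct limits forces $\varinjlim_{X'}[X'\to Y']=M_\alpha$. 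But now $H^0=t(Y_\alpha/d(X'))$ need not be small, so a second pass of your first reduction is required, and the directedness of the resulting doubly-indexed system must be checked. Finally, $X'\cap F$ is only a torsionfree \emph{subquotient} of $\Gamma^n$, not a quotient, so your $\mathcal{S}_\mathcal{F}$ must be enlarged accordingly---as indeed the paper's class $\mathcal{N}$ allows on the $H^{-1}$ side.
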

\begin{proof}
We fix a generator $G$ of $\mathcal{G}$ and, using lemma \ref{lem.F=limF implica T=Pres(V)}, we fix an object $V$ such that $\text{Pres}(V)=\mathcal{T}$. We consider the class $\mathcal{N}$ consisting of the objects $N\in\mathcal{H}_\mathbf{t}$ such that $H^{-1}(N)$ is isomorphic to a subquotient of $G^{(m)}$ and $H^0(N)\cong V^{(n)}$, for some natural numbers $m$ and $n$. We claim that this class is skeletally small. To see this, consider an object $F\in\mathcal{F}$ which is a subquotient of $G^{(m)}$, for some $m\in\mathbb{N}$, and put $\mathcal{N}_{F,n}=\{N\in\mathcal{N}:$ $H^{-1}(N)\cong F\text{ and }H^0(N)\cong V^{(n)}\}$, for each $n\in\mathbb{N}$. Bearing in mind that the subquotients of each $G^{(m)}$ form a set, it is enough  to prove that we have an injective map  $\Psi:\mathcal{N}_{F,n}/\cong\longrightarrow\text{Ext}_\mathcal{G}^2(V^{(n)},F)$ since the codomain of this map is a set. Indeed, we represent any object of $\mathcal{N}_{F,n}$ as a complex concentrated in degrees $-1,0$. If $\cdots \longrightarrow 0\longrightarrow N^{-1}\stackrel{d}{\longrightarrow}N^0\longrightarrow 0 \longrightarrow \cdots$ is such a complex, then $\Psi (N)$ will be the element of $\text{Ext}_\mathcal{G}^2(V^{(n)},F)$ given by the exact sequence

\begin{center}
$0\rightarrow F\longrightarrow N^{-1}\stackrel{d}{\longrightarrow}N^0\longrightarrow V^{(n)}\rightarrow 0$.
\end{center}
To see that $\Psi$ is well defined, we need to check that if $N\cong N'$ then $\Psi (N)=\Psi (N')$. An isomorphism $f:N \xymatrix{\ar[r]^{\sim}& }N'$ is represented by two quasi-isomorphisms $N\stackrel{s}{\longrightarrow}Y\stackrel{s'}{\longleftarrow}N'$, where $Y$ is also a complex concentrated in degrees $-1$ and $0$ (see, e.g.,  the proof of \cite[Theorem 4.2]{CGM}). Then, there is no loss of generality in assuming that $f$ is a quasi-isomorphism, in which case we have a commutative diagram with exact rows:

$$\xymatrix{0 \ar[r] & F \ar[r] \ar@{=}[d] & N^{-1} \ar[r] \ar[d]^{f^{-1}} & N^{0} \ar[r] \ar[d]^{f^{0}} & V^{(n)} \ar[r] \ar@{=}[d] & 0 \\ 0 \ar[r] & F \ar[r] & N^{' -1} \ar[r] & N^{' 0} \ar[r] & V^{(n)} \ar[r] & 0}$$

Then the upper and lower rows of this diagram represent the same element of $\text{Ext}_\mathcal{G}^2(V^{(n)},F)$ (see \cite[Chapter III]{ML}). That is, we have $\Psi (M)=\Psi (N)$, so that  $\Psi:\mathcal{N}_{F,n}/\cong\longrightarrow\text{Ext}_\mathcal{G}^2(V^{(n)},F)$ is well-defined. The injectivity of $\Psi$ follows from   the definition of $\text{Ext}_\mathcal{G}^2(V^{(n)},F)$ via congruencies (see \cite[Chapter III, Section 5]{ML}) and the fact that if we have a commutative diagram as the last one, then the induced chain map $f:N\longrightarrow N'$ is a quasi-isomorphism and, hence, an isomorphism in $\mathcal{H}_\mathbf{t}$. 

 We shall prove that any object of $\mathcal{H}_\mathbf{t}$ is an epimorphic image of a coproduct of objects of $\mathcal{N}$, which will end the proof. Let $M\in\mathcal{H}_\mathbf{t}$ be any object, which we represent by a complex $\cdots \longrightarrow 0\longrightarrow M^{-1}\stackrel{d}{\longrightarrow}M^0\longrightarrow 0 \longrightarrow \cdots$, concentrated in degrees $-1,0$. We fix an epimorphism $p:G^{(J)}\twoheadrightarrow M^{-1}$ in $\mathcal{G}$, for some set $J$. Then, for each finite subset $F\subseteq J$, we have the following commutative diagram with exact rows, where the bottom and pre-bottom left and the upper right squares are cartesian:

$$\xymatrix{0 \ar[r] & U_F \ar@{=}[d] \ar[r] & G^{(F)} \ar@{=}[d] \ar[r] & M^{0}_{F} \ar[r] \ar@{^(->}[d] \pushoutcorner & t(X_F) \ar@{^(->}[d] \ar[r] & 0 \\ 0 \ar[r] & U_F \ar[d] \ar[r] \pushoutcorner & G^{(F)} \ar[r] \ar@{^(->}[d]& M^{0} \ar@{=}[d] \ar[r] & X_F \ar[r] \ar[d] & 0 \\ 0 \ar[r] & U_{J} \ar[r] \ar[d] \pushoutcorner & G^{(J)} \ar@{>>}[d] \ar[r] & M^{0} \ar@{=}[d] \ar[r] & H^{0}(M) \ar[r] \ar@{=}[d] &0&\\ 0 \ar[r] & H^{-1}(M) \ar[r] & M^{-1} \ar[r] & M^{0} \ar[r] & H^{0}(M) \ar[r] & 0}$$
 
The AB5 condition of $\mathcal{G}$ implies that $H^0(M)=\varinjlim (X_F)$, and then the fact that $\mathcal{F}$ is closed under taking direct limits implies that $H^0(M)=\varinjlim (t(X_F))$. This in turn implies that $M^0=\varinjlim (M_F^0)$. We denote by $K_J$ the complex $\cdots \longrightarrow 0\longrightarrow G^{(J)}\stackrel{d\circ p}{\longrightarrow}M^0\longrightarrow 0 \longrightarrow \cdots $, concentrated in degrees $-1,0$. Similarly, for each finite subset $F\subseteq J$, we denote by $K_F$ the complex $\cdots \longrightarrow 0\longrightarrow G^{(F)}\longrightarrow M_F^0\longrightarrow 0 \longrightarrow \cdots$ given by the upper row of last diagram. Note that $K_J$ and $K_F$ are in $\mathcal{U}_\mathbf{t}$. By explicit construction of the functor $L:\mathcal{U}_\mathbf{t}\longrightarrow\mathcal{H}_\mathbf{t}$ (see lemma \ref{lem.adjoints to the inclusions from heart}), we  see that $L(K_J)$ and $L(K_F)$ can be represented by the complexes obtained from $K_J$ and $K_F$ by replacing $G^{(J)}$ and $G^{(F)}$ by $G^{(J)}/t(U_J)$ and $G^{(F)}/t(U_F)$, respectively, in degree $-1$.  This allows us to interpret $(L(X_F))_{F\subseteq J\text{, }F finite}$ as a direct system in $\mathcal{C}(\mathcal{G})$ of complexes concentrated in degrees $-1,0$. But the fact that $\mathcal{F}$ is closed under taking direct limits in $\mathcal{G}$ and $\varinjlim (U_F)=U_J$ implies that $t(U_J)=\varinjlim t(U_F)$, from which we easily get that $L(K_J)=\varinjlim_{\mathcal{C}(\mathcal{G})}(L(K_F))$.  From lemma \ref{lem.H-direct limit of system in CG} we deduce that $L(K_J)\cong\varinjlim_{\mathcal{H}_\mathbf{t}}(L(K_F))$, so that $L(K_J)$ is a quotient in $\mathcal{H}_\mathbf{t}$ of a coproduct of complexes $N'\in\mathcal{H}_\mathbf{t}$ such that $H^{-1}(N')$ is a subquotient of $G^m$, for some $m\in\mathbb{N}$. 

Note that the chain map $K_J\longrightarrow M$ obtained from the diagram above induces a chain map $q:L(K_J)\longrightarrow M$, because $p(t(U_J))\subseteq t(H^{-1}(M))=0$. Moreover, by construction, we have that $H^{-1}(q)$ is an epimorphism and $H^{0}(q)=1_{H^0(M)}$ is an isomorphism. By lemma \ref{lem.epimorphism in H through homologies}, we get that $q$ is an epimorphism in $\mathcal{H}_\mathbf{t}$.
This, together with the previous paragraph, shows that each $M$ in $\mathcal{H}_\mathbf{t}$ is  a quotient of a coproduct of objects $N'$ of $\mathcal{H}_\mathbf{t}$ as above. Replacing $M$ by one such $N'$, we can and shall  restrict ourselves to the case when $M$ is a complex concentrated in degrees $-1,0$, which is in $\mathcal{H}_\mathbf{t}$ and satisfies that $H^{-1}(M)$ is a subquotient of $G^m$, for some $m\in\mathbb{N}$. 

Suppose that $M$ is such a complex in the rest of the proof. By fixing an epimorphism $v:V^{(S)}\twoheadrightarrow H^0(M)$ such that $\text{Ker}(v)\in\mathcal{T}$ and pulling it back along the canonical epimorphism $M^0\twoheadrightarrow H^0(M)$, we obtain a complex $\hat{M}: ...0\longrightarrow M^{-1}\longrightarrow\hat{M}^0\longrightarrow 0...$, concentrated in degrees $-1,0$, such that $H^{-1}(\hat{M})=H^{-1}(M)$, $H^0(\hat{M})=V^{(S)}$ and it comes  with an induced exact sequence  $0\rightarrow \text{Ker}(v)[0]\longrightarrow\hat{M}\stackrel{\hat{v}}{\longrightarrow}M\rightarrow 0$ in $\mathcal{C}(\mathcal{G})$. Since the three terms of this sequence are in $\mathcal{H}_\mathbf{t}$, we get that $\hat{v}$ is an epimorphism in $\mathcal{H}_\mathbf{t}$. Note that $v$ exists because $\mathcal{T}=\text{Pres}(V)$.

Replacing now $M$ by $\hat{M}$, 
we can assume without loss of generality that $H^0(M)=V^{(S)}$, for some set $S$.  Now for each finite subset $F\subset S$, we consider the following commutative diagram with exact rows, whose right square is cartesian. 

$$\xymatrix{0 \ar[r] & H^{-1}(M) \ar[r] \ar@{=}[d] & M^{-1} \ar[r] \ar@{=}[d] & N^{0}_{F} \ar@{^(->}[d] \pushoutcorner \ar[r] & V^{(F)} \ar[r] \ar@{^(->}[d] & 0 \\ 0 \ar[r] & H^{-1}(M) \ar[r] & M^{-1} \ar[r] & M^{0} \ar[r] & V^{(S)} \ar[r] & 0}$$

We denote by $N_F$ the complex $ \cdots \longrightarrow 0\longrightarrow M^{-1}\longrightarrow N_F^0\longrightarrow 0 \longrightarrow \cdots$, concentrated in degrees $-1,0$, given by the upper row of the last diagram. Note that $N_F\in\mathcal{N}$, for each $F\subset S$ finite. Moreover,  $(N_F)_{F\subset S\text{, }F\text{ finite}}$ is a direct system in $\mathcal{C}(\mathcal{G})$ such that $\varinjlim_{\mathcal{C}(\mathcal{G})}(N_F)=M$. By lemma \ref{lem.H-direct limit of system in CG}, we get that $\varinjlim_{\mathcal{H}_\mathbf{t}}(N_F)=M$, so that $M$ is a quotient in $\mathcal{H}_\mathbf{t}$ of a coproduct of objects of $\mathcal{N}$. 
\end{proof}

We are now ready to give the two main results of the paper.

\begin{teor} \label{teor.AB5 heart of a torsion pair}
Let $\mathcal{G}$ be a Grothendieck category, let
$\mathbf{t}=(\mathcal{T},\mathcal{F})$ be a torsion pair in
$\mathcal{G}$, let
$(\mathcal{U}_\mathbf{t},\mathcal{U}_\mathbf{t}^\perp [1])$ be its
associated t-structure in $\mathcal{D}(\mathcal{G})$ and let
$\mathcal{H}_\mathbf{t}=\mathcal{U}_\mathbf{t}\cap\mathcal{U}_\mathbf{t}^{\perp}[1]$
be the heart. The following assertions are equivalent:

\begin{enumerate}
\item[0.] $\mathcal{H}_\mathbf{t}$ is a Grothendieck category.
\item $\mathcal{H}_\mathbf{t}$ is an AB5 abelian category.
\item If $(M_i)_{i\in I}$ is a direct system in
$\mathcal{H}_\mathbf{t}$, with

\begin{center}
$\coprod_{i\leq j}M_{ij}\stackrel{f}{\longrightarrow}\coprod_{i\in
I}M_i\longrightarrow Z\stackrel{+}{\longrightarrow}$
\end{center}
the triangle in $\mathcal{D}(\mathcal{G})$ afforded by the
associated colimit-defining morphism, then the composition
$\varinjlim H^{-1}(M_i)\longrightarrow
H^{-1}(Z)\stackrel{can}{\twoheadrightarrow}H^{-1}(Z)/t(H^{-1}(Z))$
is a monomorphism.

\item For each direct system $(M_i)_{i\in I}$ in $\mathcal{H}_\mathbf{t}$, the
canonical morphism $\varinjlim H^{-1}(M_i)\longrightarrow
H^{-1}(\varinjlim_\mathcal{H_\mathbf{t}}M_i)$ is a monomorphism.
\item For each direct system $(M_i)_{i\in I}$ in $\mathcal{H}_\mathbf{t}$,  the canonical
 morphism $\varinjlim H^k(M_i)\longrightarrow
 H^k(\varinjlim_{\mathcal{H}_\mathbf{t}}(M_i))$ is an isomorphism,
 for all $k\in\mathbb{Z}$.
\end{enumerate}
In that case, the class $\mathcal{F}$ is closed under taking direct limits in $\mathcal{G}$. 
\end{teor}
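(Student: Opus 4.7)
The plan is to establish the equivalences in two main steps, with the closure of $\mathcal{F}$ under direct limits appearing as a byproduct. First, I would observe that $(2)\Leftrightarrow(3)\Leftrightarrow(4)$ is essentially formal from Proposition \ref{prop.direct limit under H0 and H1}: by its assertion (1), the canonical morphism $\varinjlim H^k(M_i)\to H^k(\varinjlim_{\mathcal{H}_\mathbf{t}} M_i)$ is already an isomorphism for $k\neq -1$ and an epimorphism for $k=-1$, so $(3)\Leftrightarrow(4)$ collapses to the mono-equals-iso equivalence at degree $-1$; and the identification $H^{-1}(L)\cong H^{-1}(Z)/t(H^{-1}(Z))$ established inside the proof of Proposition \ref{prop.direct limit under H0 and H1} factors the canonical map of $(3)$ through $H^{-1}(Z)$, exhibiting $(2)$ and $(3)$ as the same condition.

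The implication $(0)\Rightarrow(1)$ is definitional. The crucial step is $(1)\Rightarrow$ ($\mathcal{F}$ closed under direct limits), which yields the final ``in that case'' assertion. Here I would deploy Lemma \ref{lem.FFTT}: given an arbitrary direct system $(F_i)$ in $\mathcal{F}$, the goal is $t(\varinjlim F_i)=0$, and the strategy is to produce a direct system of four-term exact sequences in $\mathcal{G}$ around $(F_i)$ to which Lemma \ref{lem.FFTT} applies, so that the auxiliary vanishing $t(\varinjlim(K_i/t(K_i)))=0$ and the identity $\varinjlim t(K_i)=t(\varinjlim K_i)$ extracted from its proof transport to the torsion-freeness of $\varinjlim F_i$. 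This is the hardest step, since it requires fashioning the four-term sequences and the accompanying $\mathcal{T}$-components adequately from the given data $(F_i)$.

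Once $\mathcal{F}$ is closed under direct limits, the remaining implications follow. Applying AB5 to the canonical short exact sequences $0\to H^{-1}(M_i)[1]\to M_i\to H^0(M_i)[0]\to 0$ in $\mathcal{H}_\mathbf{t}$ and using Proposition \ref{prop.direct limit under H0 and H1}(2) and (3a) gives
\[
H^{-1}(\varinjlim_{\mathcal{H}_\mathbf{t}} M_i) \cong \frac{\varinjlim H^{-1}(M_i)}{t(\varinjlim H^{-1}(M_i))},
\]
and since $\varinjlim H^{-1}(M_i)\in\mathcal{F}$, the canonical map of $(3)$ is a monomorphism, proving $(3)$ (and hence $(4)$ and $(2)$). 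For $(0)$, Proposition \ref{prop.limF=F implies generator} furnishes a generator of $\mathcal{H}_\mathbf{t}$; AB5 is verified by checking that, for any direct system of short exact sequences $0\to X_i\to Y_i\to Z_i\to 0$ in $\mathcal{H}_\mathbf{t}$, the kernel $W$ of $\varinjlim X_i\to\varinjlim Y_i$ satisfies $H^{-1}(W)=H^0(W)=0$ — this follows from $(4)$ by a diagram chase comparing the six-term cohomology sequences associated to the splitting $0\to W\to\varinjlim X_i\to\text{Im}\to 0$, $0\to\text{Im}\to\varinjlim Y_i\to\varinjlim Z_i\to 0$ against the exact sequence obtained by applying $\varinjlim_{\mathcal{G}}$ to the per-$i$ cohomology long exact sequence, with the alignment of connecting maps being the subsidiary technicality.
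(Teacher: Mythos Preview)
Your overall architecture matches the paper's closely: $(2)\Leftrightarrow(3)\Leftrightarrow(4)$ are indeed formal from Proposition \ref{prop.direct limit under H0 and H1} and its proof; $(4)\Rightarrow(1)$ is Proposition \ref{prop.exactness of I-colimits} applied with the cohomological datum $(H^0,1)$, of which your diagram chase is an inline specialization; and once $\mathcal{F}$ is closed under direct limits, Proposition \ref{prop.limF=F implies generator} supplies the generator for $(0)$.

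The real gap is in the step $(1)\Rightarrow\{\mathcal{F}\text{ closed under }\varinjlim\}$. You correctly name Lemma \ref{lem.FFTT} as the tool and flag this as the hardest step, but your proposed mechanism---extracting the auxiliary identities $t(\varinjlim(K_i/t(K_i)))=0$ and $\varinjlim t(K_i)=t(\varinjlim K_i)$ from inside that lemma's proof and ``transporting'' them to the torsion-freeness of $\varinjlim F_i$---is not an argument, and it is unclear how it would close. The paper's construction is specific and not suggested by your sketch: put $T:=t(\varinjlim F_i)$, form $\gamma_i\colon F_i\to(\varinjlim F_i)/T$ and then $u_i\colon\text{Ker}(\gamma_i)\to T$, and apply Lemma \ref{lem.FFTT} to the four-term sequences
\[
0\to\text{Ker}(u_i)\to\text{Ker}(\gamma_i)\xrightarrow{\,u_i\,} T\to\text{Coker}(u_i)\to 0,
\]
whose first two terms lie in $\mathcal{F}$ and last two in $\mathcal{T}$. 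The lemma's \emph{conclusion} says $u:=\varinjlim u_i$ vanishes on $t(\varinjlim\text{Ker}(\gamma_i))$; but $\varinjlim\text{Ker}(\gamma_i)=T\in\mathcal{T}$, so $u=0$, while $u$ is tautologically an isomorphism---hence $T=0$. The trick is to engineer a system in which the domain is already entirely torsion and the limit map is an isomorphism; nothing from the interior of the lemma's proof is reused. Without this construction (or an equivalent one) your argument for the ``in that case'' clause, and hence for $(1)\Rightarrow(2)$ and $(1)\Rightarrow(0)$, does not go through.
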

\begin{proof}

$0)\Longrightarrow 1)$ is clear.

 $1)\Longrightarrow 0)$ By proposition \ref{prop.limF=F implies generator}, we just need to prove that $\mathcal{F}$ is closed under taking direct limits in $\mathcal{G}$. Let $(F_i)_{i\in I}$ be any direct system in $\mathcal{F}$. For each
$j\in I$, let us denote by $\gamma_j$ the composition
$F_j\stackrel{\iota_j}{\longrightarrow}\varinjlim
F_i\stackrel{p}{\longrightarrow}\frac{\varinjlim F_i}{t(\varinjlim
F_i)}$, where the morphisms are the obvious ones. Using the
exactness of direct limits in $\mathcal{G}$, it follows that
$(\text{Ker}(\gamma_i))_{i\in I}$ is a direct system in
$\mathcal{F}$ such that $\varinjlim\text{Ker}(\gamma_i)=t(\varinjlim
F_i)$ is in $\mathcal{T}$. Put $T:=t(\varinjlim F_i)$ and, for each
$i\in I$,  consider the canonical map
$u_i:\text{Ker}(\gamma_i)\longrightarrow T$ into the direct limit.
We then get a direct system of exact sequences in $\mathcal{G}$

\begin{center}
$0\rightarrow\text{Ker}(u_i)\longrightarrow\text{Ker}(\gamma_i)\stackrel{u_i}{\longrightarrow}T\longrightarrow\text{Coker}(u_i)\rightarrow
0$.
\end{center}
From lemma \ref{lem.FFTT} we then get  that the map $u:=\varinjlim
u_i:\varinjlim\text{Ker}(\gamma_i)\longrightarrow T$ vanishes on
$t(\varinjlim\text{Ker}(\gamma_i))$. This implies that $u=0$ since
$\varinjlim\text{Ker}(\gamma_i)=T$ is in $\mathcal{T}$. But $u$ is
an isomorphism by definition of the direct limit. It then follows
that $T=0$, so that $\varinjlim F_i\in\mathcal{F}$ as desired.

$1)\Longrightarrow 2)$ Note that, by the proof of $1)\Longrightarrow 0)$, we know that $\mathcal{F}$ is closed under taking direct limits in $\mathcal{G}$. In particular, if $(M_i)_{i\in I}$ be a direct system in
$\mathcal{H}_\mathbf{t}$ then $t(\varinjlim H^{-1}(M_i))=0$.  For such a direct system, we get an induced direct system of
short exact sequences in $\mathcal{H}_\mathbf{t}$

\begin{center}
$0\rightarrow H^{-1}(M_i)[1]\longrightarrow M_i\longrightarrow
H^0(M_i)[0]\rightarrow 0$.
\end{center}
From the  AB5 condition of $\mathcal{H}_\mathbf{t}$ and proposition
\ref{prop.direct limit under H0 and H1} we  get an exact sequence in
$\mathcal{H}_\mathbf{t}$

\begin{center}
$0\rightarrow \varinjlim H^{-1}(M_i)[1]\longrightarrow\varinjlim_\mathcal{H_\mathbf{t}}M_i\longrightarrow(\varinjlim
H^0(M_i))[0]\rightarrow 0$.
\end{center}
By taking homologies, we get that the canonical morphism 
$\varinjlim H^{-1}(M_i)\longrightarrow
H^{-1}(\varinjlim_\mathcal{H_\mathbf{t}}M_i)$ is a monomorphism in $\mathcal{G}$ and, by the proof of proposition \ref{prop.direct limit under H0 and H1}, we know that $H^{-1}(\varinjlim_\mathcal{H_\mathbf{t}}M_i)\cong H^{-1}(Z)/t(H^{-1}(Z))$.

$2)\Longleftrightarrow 3)\Longleftrightarrow 4)$ follow directly from
proposition \ref{prop.direct limit under H0 and H1}, its proof and
the fact that all complexes in $\mathcal{H}_\mathbf{t}$ have
homology concentrated in degrees $-1$ and $0$.

$4)\Longrightarrow 1)$
$H^0:\mathcal{D}(\mathcal{G})\longrightarrow\mathcal{G}$ is a
cohomological functor, and then $(H^0,1)$ is a cohomological datum
for $\mathcal{H}_\mathbf{t}$. Then the implication  follows from
proposition \ref{prop.exactness of I-colimits}.
\end{proof}

From last theorem we get that the Grothendieck condition of the heart implies  the closure of $\mathcal{F}$ under taking direct limits. One can naturally asks if the converse is also true. Our second main result in the paper shows that this is the case for some familiar torsion pairs. 

\begin{teor} \label{teor.Grothendieck heart2}
Let $\mathcal{G}$ be a Grothendieck category and let
$\mathbf{t}=(\mathcal{T},\mathcal{F})$ be a torsion pair in
$\mathcal{G}$ satisfying, at least,  one the following  conditions:

\begin{enumerate}
\item[a)] $\mathbf{t}$ is  hereditary.
\item[b)] Each object of $\mathcal{H}_\mathbf{t}$ is
isomorphic in $\mathcal{D}(\mathcal{G})$ to a complex $F^\cdot$ such
that $F^k=0$, for $k\neq -1,0$, and $F^k\in\mathcal{F}$, for
$k=-1,0$.
\item[c)] Each object of $\mathcal{H}_\mathbf{t}$ is
isomorphic in $\mathcal{D}(\mathcal{G})$ to a complex $T^\cdot$ such
that $T^k=0$, for $k\neq -1,0$, and $T^k\in\mathcal{T}$, for
$k=-1,0$.

\end{enumerate}

The following assertions are equivalent:

\begin{enumerate}
\item The heart $\mathcal{H}_\mathbf{t}$ is a Grothendieck category;
\item $\mathcal{F}$ is closed under taking direct limits in
$\mathcal{G}$.
\end{enumerate}
\end{teor}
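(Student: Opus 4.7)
The implication (1)$\Rightarrow$(2) is the final assertion of Theorem~\ref{teor.AB5 heart of a torsion pair}. For (2)$\Rightarrow$(1), assume $\mathcal{F}$ is closed under direct limits. Proposition~\ref{prop.limF=F implies generator} already provides a generator for $\mathcal{H}_\mathbf{t}$, so by Theorem~\ref{teor.AB5 heart of a torsion pair} it suffices to show that, for every direct system $(M_i)_{i\in I}$ in $\mathcal{H}_\mathbf{t}$, the canonical morphism $\varinjlim H^{-1}(M_i)\to H^{-1}(\varinjlim_{\mathcal{H}_\mathbf{t}}M_i)$ is a monomorphism.

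\medskip

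Applying the long exact cohomology sequence to the triangle $\coprod_{i\leq j}M_{ij}\stackrel{f}{\to}\coprod_{i}M_i\to Z\stackrel{+}{\to}$ attached to the colimit-defining morphism, and exploiting that $H^{0}$ and $H^{-1}$ preserve coproducts (Lemma~\ref{lem.exactness of the Hi}) and vanish in other degrees on objects of the heart, one obtains a short exact sequence
\[
0\to\varinjlim H^{-1}(M_i)\to H^{-1}(Z)\to K\to 0
\]
in $\mathcal{G}$, where $K=\Ker\bigl(\coprod H^0(M_{ij})\to\coprod H^0(M_i)\bigr)$. By the proof of Proposition~\ref{prop.direct limit under H0 and H1}, $H^{-1}(\varinjlim_{\mathcal{H}_\mathbf{t}}M_i)\cong H^{-1}(Z)/t(H^{-1}(Z))$, and the canonical morphism factors as $\varinjlim H^{-1}(M_i)\hookrightarrow H^{-1}(Z)\twoheadrightarrow H^{-1}(Z)/t(H^{-1}(Z))$; the task therefore reduces to proving $\varinjlim H^{-1}(M_i)\cap t(H^{-1}(Z))=0$ inside $H^{-1}(Z)$. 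In case (a), hereditariness makes $\mathcal{T}$ closed under subobjects, so $K\in\mathcal{T}$; the intersection is then simultaneously a subobject of $\varinjlim H^{-1}(M_i)\in\mathcal{F}$ (by the closure hypothesis) and of $t(H^{-1}(Z))\in\mathcal{T}$, hence (applying hereditariness once more on the torsion side) belongs to $\mathcal{F}\cap\mathcal{T}=0$.

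\medskip

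For cases (b) and (c), the strategy is to bypass the torsion analysis of $H^{-1}(Z)$ by computing $\varinjlim_{\mathcal{H}_\mathbf{t}}M_i$ at the chain level. Using the prescribed representations, replace each $M_i$ by a complex $X_i^{-1}\to X_i^{0}$ concentrated in degrees $-1,0$ with entries in $\mathcal{F}$ (case (b)) or $\mathcal{T}$ (case (c)). If one can assemble these into an honest directed system in $\mathcal{C}(\mathcal{G})$ compatible, up to quasi-isomorphism, with the original system, the chain-level direct limit $\varinjlim X_i^{-1}\to\varinjlim X_i^0$ retains its entries in $\mathcal{F}$ (resp.\ $\mathcal{T}$) by closure, has kernel $\varinjlim H^{-1}(M_i)\in\mathcal{F}$ (by the closure hypothesis) and cokernel $\varinjlim H^0(M_i)\in\mathcal{T}$ (always, as $\mathcal{T}$ is stable under direct limits), and therefore lies in $\mathcal{H}_\mathbf{t}$. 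Lemma~\ref{lem.H-direct limit of system in CG} then identifies this chain-level direct limit with $\varinjlim_{\mathcal{H}_\mathbf{t}}M_i$, which gives the stronger equality $H^{-1}(\varinjlim_{\mathcal{H}_\mathbf{t}}M_i)=\varinjlim H^{-1}(M_i)$. The main obstacle is precisely this rigidification step: morphisms in $\mathcal{D}(\mathcal{G})$ are fractions rather than chain maps, so lifting the transition maps $u_{ij}$ to chain maps of $\mathcal{F}$- or $\mathcal{T}$-complexes requires either a functorial construction of representatives (drawing on the generating/cogenerating features hidden in (b) and (c)) or a cofinality plus successive-lifting argument; failing that, one would have to analyse the torsion behaviour of $H^{-1}(Z)$ by a delicate hands-on computation.
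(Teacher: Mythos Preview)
Your treatment of $(1)\Rightarrow(2)$ and of case (a) in $(2)\Rightarrow(1)$ matches the paper's argument and is correct (the remark that $K\in\mathcal{T}$ is true but not needed; what matters is that the intersection sits inside $t(H^{-1}(Z))$, and hereditariness forces it into $\mathcal{T}$).

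For cases (b) and (c), however, you have not given a proof: you describe the intended strategy and then explicitly flag the rigidification step as the ``main obstacle'' without carrying it out. This is exactly where the work lies, and the paper handles the two cases by quite different mechanisms.

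In case (c) the paper does precisely the functorial construction you allude to. Using Lemma~\ref{lem.F=limF implica T=Pres(V)} to write $\mathcal{T}=\text{Pres}(V)$, it builds functors $\Psi,\xi:\mathcal{H}_\mathbf{t}\to\mathcal{T}$ via $\Psi(M)=V^{(\Hom_{\mathcal{H}_\mathbf{t}}(V[0],M))}$ and $\xi(M)=\Ker(\Psi(M)\twoheadrightarrow M)$, yielding a functorial exact sequence $0\to\xi\to\Psi\to\text{id}\to 0$ and hence a genuine functor $C:\mathcal{H}_\mathbf{t}\to\mathcal{C}(\mathcal{G})$ with $q\circ C\cong\text{incl}$. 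This is what makes Lemma~\ref{lem.H-direct limit of system in CG} applicable. You correctly guessed such a construction should exist, but the specific choice of $\Psi$ is the content.

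In case (b) the paper does \emph{not} rigidify the system in $\mathcal{C}(\mathcal{G})$ at all. Instead it first observes that every object of $\mathcal{H}_\mathbf{t}$ embeds into some $F[1]$ with $F\in\mathcal{F}$, then reduces to an ordinal-indexed continuous system $(M_\alpha)_{\alpha<\lambda}$ and, by transfinite induction, builds a compatible system of short exact sequences $0\to M_\alpha\to F_\alpha[1]\to F'_\alpha[1]\to 0$ in $\mathcal{H}_\mathbf{t}$. The successor step is a double pushout; at a limit ordinal $\beta$ one takes the $\mathcal{H}_\mathbf{t}$-direct limit of the sequences already built and uses Proposition~\ref{prop.direct limit under H0 and H1} together with the closure of $\mathcal{F}$ under direct limits to verify that $\varinjlim_{\alpha<\beta}M_\alpha\to(\varinjlim F_\alpha)[1]$ is again a monomorphism. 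Running this argument for $\lambda$ itself yields the required injectivity of $\varinjlim H^{-1}(M_\alpha)\to H^{-1}(\varinjlim_{\mathcal{H}_\mathbf{t}}M_\alpha)$. This is closer to your ``cofinality plus successive-lifting'' option, but the lifting is done in $\mathcal{H}_\mathbf{t}$ rather than in $\mathcal{C}(\mathcal{G})$, and the limit-ordinal verification is the heart of the matter.
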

\begin{proof}
$1)\Longrightarrow 2)$ follows
from theorem \ref{teor.AB5 heart of a torsion pair}.

$2)\Longrightarrow 1)$ Let $\mathbf{t}$ be hereditary in this first
paragraph. Let $(M_i)_{i\in I}$ be a direct system in
$\mathcal{H}_\mathbf{t}$. With the terminology theorem \ref{teor.AB5
heart of a torsion pair}, note that the first arrow of the
composition $\varinjlim H^{-1}(M_i)\longrightarrow
H^{-1}(Z)\stackrel{can}{\longrightarrow}H^{-1}(Z)/t(H^{-1}(Z))$ is
always a monomorphism. As a consequence, when $\mathbf{t}$ is
hereditary, the composition is automatically a monomorphism since
its kernel is in $\mathcal{T}\cap\mathcal{F}=0$. By the mentioned
theorem, we get that $\mathcal{H}_\mathbf{t}$ is a Grothendieck category.

 Suppose next that condition b holds.  We
claim that, in that case,  each object of $\mathcal{H}_\mathbf{t}$
is isomorphic to a subobject of an object in $\mathcal{F}[1]$.
Indeed, if $M$ is isomorphic to the mentioned complex $F^\cdot$,
then its differential $d:F^{-1}\longrightarrow F^0$ gives a triangle

\begin{center}
$F^{-1}[0]\longrightarrow F^0[0]\longrightarrow
M\stackrel{+}{\longrightarrow}$
\end{center}
in $\mathcal{D}(\mathcal{G})$ and, hence, it also gives an exact
sequence in $\mathcal{H}_\mathbf{t}$

\begin{center}
$0\rightarrow M\longrightarrow F^{-1}[1]\longrightarrow
F^0[1]\rightarrow 0$.
\end{center}

We want to check that assertion 3 of theorem \ref{teor.AB5 heart of
a torsion pair} holds, for which we will use the fact that
$\mathcal{F}[1]$ is closed under taking quotients in
$\mathcal{H}_\mathbf{t}$. By traditional arguments (see, e.g.
\cite[Corollary 1.7]{AR}), it is not restrictive to assume that the
directed set $I$ is an ordinal and that the given direct system in
$\mathcal{H}_\mathbf{t}$ is continuous (smooth in the terminology of
\cite{AR}). So we start with a direct system $(M_\alpha )_{\alpha
<\lambda}$ in $\mathcal{H}_\mathbf{t}$, where $\lambda$ is a limit
ordinal and $M_\beta =\varinjlim_{\alpha <\beta}M_\alpha$, whenever
$\beta$ is a limit ordinal such that $\beta <\lambda$. Now, by
transfinite induction, we can define a $\lambda$-direct system of
short exact sequences in $\mathcal{H}_\mathbf{t}$

\begin{center}
$0\rightarrow M_\alpha\longrightarrow F_\alpha [1]\longrightarrow
F'_\alpha [1]\rightarrow 0$,
\end{center}
with $F_\alpha ,F'_\alpha\in\mathcal{F}$ for all $\alpha <\lambda$.
Suppose that $\beta =\alpha +1$ is nonlimit and that the sequence
has been defined for $\alpha$. Then the sequence for $\beta$ is the
bottom one of the following commutative diagram, where the upper
left and lower right squares are bicartesian and   F and F' denote
objects of $\mathcal{F}$, and $u_{\alpha + 1}$ any monomorphism into
an object of $\mathcal{F}[1]$:

$$\xymatrix{0 \ar[r] & M_\alpha \pushoutcorner \ar[r] \ar[d]& F_{\alpha}[1] \ar[r] \ar[d] & F_{\alpha}^{'}[1] \ar[r] \ar@{=}[d] & 0 \\ 0 \ar[r] & M_{\alpha + 1} \ar[r] \ar@{=}[d] & N_{\alpha} \pushoutcorner \pullbackcorner \ar[r] \ar[d]^{u_{\alpha+1}} & F_{\alpha}^{'}[1] \ar[r] \ar[d] & 0 \\ 0 \ar[r] & M_{\alpha +1} \ar[r] & F_{\alpha +1}[1] \ar[r] & F^{'}_{\alpha +1}[1] \pullbackcorner \ar[r] & 0 }$$

Suppose now that $\beta$ is a limit ordinal and that the sequence
has been defined for all ordinals $\alpha <\beta$. Using proposition
\ref{prop.direct limit under H0 and H1} and the fact that
$\mathcal{F}$ is closed under taking direct limits, we then get an
exact sequence in $\mathcal{H}_\mathbf{t}$

\begin{center}
$\varinjlim_{\alpha <\beta}M_\alpha\stackrel{g}{\longrightarrow}
(\varinjlim_{\alpha <\beta}F_\alpha)[1]\longrightarrow
(\varinjlim_{\alpha <\beta}F'_\alpha)[1]\rightarrow 0$.
\end{center}
Recall that, by the continuity of the direct system, we have
$M_\beta =\varinjlim_{\alpha <\beta}M_\alpha$. We denote by $W$ the
image of $g$ in $\mathcal{H}_\mathbf{t}$. We then get the following
commutative diagram with exact rows.

$$\xymatrix{0 \ar[r] & \varinjlim_{\alpha <\beta}H^{-1}(M_\alpha ) \ar[r] \ar[d] & \varinjlim{F_{\alpha}} \ar[r] \ar[d]^{\wr} & \varinjlim{F^{'}_{\alpha}} \ar[r] \ar[d]^{\wr} & \varinjlim{H^{0}(M_{\alpha})} \ar[r] \ar[d] & 0 \\ 0 \ar[r] & H^{-1}(W) \ar[r] & H^{-1}(\varinjlim_{\mathcal{H}_{\mathbf{t}}}{F_{\alpha}[1]}) \ar[r] & H^{-1}(\varinjlim_{\mathcal{H}_{\mathbf{t}}}{F^{'}_{\alpha}[1]}) \ar[r] & H^{0}(W) \ar[r] & 0}$$

All the vertical arrows are then isomorphisms since so are the two
central ones.  But the left vertical arrow is the composition
$\varinjlim_{\alpha <\beta}H^{-1}(M_\alpha )\longrightarrow
H^{-1}(\varinjlim_{\alpha <\beta}M_\alpha
)\stackrel{H^{-1}(p)}{\longrightarrow}H^{-1}(W)$, where
$p:\varinjlim_{\alpha <\beta}M_\alpha\longrightarrow W$ is the
obvious epimorphism in $\mathcal{H}_\mathbf{t}$. It follows that the
canonical map $\varinjlim_{\alpha <\beta}H^{-1}(M_\alpha
)\longrightarrow H^{-1}(\varinjlim_{\alpha <\beta}M_\alpha )$ is a
monomorphism. Then it is an isomorphism due to proposition
\ref{prop.direct limit under H0 and H1}(1) and, by this same
proposition and the isomorphic condition of the right vertical arrow
in the above diagram, we conclude that $H^k(p)$ is an isomorphism,
for all $k\in\mathbf{Z}$. Then $p$ is an isomorphism in
$\mathcal{H}_\mathbf{t}$ and the desired short exact sequence for
$\beta$ is defined.

The argument of the previous paragraph, when applied to $\lambda$
instead of $\beta$, shows that the induced morphism
$\varinjlim_{\alpha <\lambda}H^{-1}(M_\alpha )\longrightarrow
H^{-1}(\varinjlim_{\alpha <\lambda}M_\lambda )$ is a monomorphism,
and then assertion 3 of theorem \ref{teor.AB5 heart of a torsion
pair} holds. It follows that $\mathcal{H}_\mathbf{t}$ is a
Grothendieck category.

Finally, suppose that condition c holds.  An argument dual to the
one used for condition b, shows that, $\mathcal{T}[0]$ generates
$\mathcal{H}_\mathbf{t}$. On the other hand, by lemma
\ref{lem.F=limF implica T=Pres(V)}, we have an object
$V\in\mathcal{T}$ such that $\mathcal{T}=\text{Pres}(V)$.  We easily
derive that, for each $T\in\mathcal{T}$, the kernel of the canonical
epimorphism $V^{(Hom_\mathcal{G}(V,T))}\twoheadrightarrow T$ is in
$\mathcal{T}$, so that the induced morphism
$V[0]^{(\text{Hom}_\mathcal{G}(V,T))}\longrightarrow T[0]$ is an
epimorphism in $\mathcal{H}_\mathbf{t}$. Therefore $V[0]$ is a
generator of $\mathcal{H}_\mathbf{t}$.

Note that the assignment $M\rightsquigarrow
\Psi(M):=V[0]^{(\text{Hom}_\mathcal{H_{\mathbf{t}}}(V[0],M))}$ is
functorial. Indeed if $f:M\longrightarrow N$ is a morphism in
$\mathcal{H}_\mathbf{t}$, we define $\Psi
(f):V[0]^{(\text{Hom}_\mathcal{H_{\mathbf{t}}}(V[0],M))}\longrightarrow
V[0]^{(\text{Hom}_\mathcal{H_{\mathbf{t}}}(V[0],N))}$ using the
universal property of the coproduct in $\mathcal{H}_\mathbf{t}$. By
definition,  $\Psi (f)$ the unique morphism in
$\mathcal{H}_\mathbf{t}$ such that $\Psi (f)\circ\iota^M_\alpha
=\iota_{f\circ\alpha}^N$, where $\iota_\alpha^M:V[0]\longrightarrow
V[0]^{(\text{Hom}_\mathcal{H_{\mathbf{t}}}(V[0],M))}$ is the
$\alpha$-injection into the coproduct, where
$\alpha\in\text{Hom}_\mathcal{H_{\mathbf{t}}}(V[0],M)$, and
similarly for $\iota_{f\circ\alpha}^N$.

The functor $\Psi
:\mathcal{H}_\mathbf{t}\longrightarrow\mathcal{H}_\mathbf{t}$ comes
with natural transformation $p:\Psi\twoheadrightarrow
id_\mathcal{H_{\mathbf{t}}}$ which is epimorphic. Note that $\xi
(M):=\text{Ker}(p_M)=T_M[0]$, for some $T_M\in\mathcal{T}$, since
$\mathcal{T}[0]$ is closed under taking subobjects in
$\mathcal{H}_\mathbf{t}$. We then get functors $\Psi ,\xi
:\mathcal{H}_\mathbf{t}\longrightarrow\mathcal{T}\cong\mathcal{T}[0]\hookrightarrow\mathcal{H}_\mathbf{t}$,
together with an exact sequence of functors
$0\rightarrow\xi\stackrel{\mu}{\hookrightarrow}\Psi\stackrel{p}{\twoheadrightarrow}id_\mathcal{H_{\mathbf{t}}}\rightarrow
0$. In particular, if $f:L\longrightarrow M$ are   morphisms in
$\mathcal{H}_\mathbf{t}$, we have a commutative diagram in
$\mathcal{T}\cong\mathcal{T}[0]$

$$\xymatrix{\xi(L) \ar[r]^{\mu_{L}} \ar[d]^{\xi(f)} & \Psi(L) \ar[d]^{\Psi(f)} \\  \xi(M) \ar[r]^{\mu_{M}}  & \Psi(M) }$$

But note that if $M$ is an object  of $\mathcal{H}_\mathbf{t}$,
then, viewing $\xi (M)$ and $\Psi (M)$ as objects of $\mathcal{T}$,
the complex $C(M): \cdots \longrightarrow 0\longrightarrow\xi
(M)\longrightarrow\Psi (M)\longrightarrow 0 \longrightarrow \cdots $
($\Psi (M)$ in degree $0$) is isomorphic to $M$ in
$\mathcal{D}(\mathcal{G})$. The diagram above tells us that the
assignment $M\rightsquigarrow C(M)$ gives a functor
$C:\mathcal{H}_\mathbf{t}\longrightarrow\mathcal{C}(\mathcal{G})$
such that if
$q:\mathcal{C}(\mathcal{G})\longrightarrow\mathcal{D}(\mathcal{G})$
is the canonical functor, then the composition
$\mathcal{H}_\mathbf{t}\stackrel{C}{\longrightarrow}\mathcal{C}(\mathcal{G})\stackrel{q}{\longrightarrow}\mathcal{D}(\mathcal{G})$
is naturally isomorphic to the inclusion
$\mathcal{H}_\mathbf{t}\hookrightarrow\mathcal{D}(\mathcal{G})$.

Suppose that $(M_i)_{i\in I}$ is a direct system in
$\mathcal{H}_\mathbf{t}$. Then $(C(M_i))_{i\in I}$ is a direct
system in $\mathcal{C}(\mathcal{G})$. By lemma \ref{lem.H-direct
limit of system in CG}, we know that
$\varinjlim_{\mathcal{C}(\mathcal{G})}C(M_i)\cong\varinjlim_\mathcal{H_\mathbf{t}}C(M_i)\cong\varinjlim_\mathcal{H_\mathbf{t}}M_i$.
Then we get an isomorphism $\varinjlim
H^{-1}(M_i)=H^{-1}(\varinjlim_{\mathcal{C}(\mathcal{G})}C(M_i))\stackrel{\cong}{\longrightarrow}H^{-1}(\varinjlim_\mathcal{H_\mathbf{t}}M_i)$
(see the proof of lemma \ref{lem.H-direct limit of system in CG}).
Then assertion 3 of theorem \ref{teor.AB5 heart of a torsion pair}
holds.

\begin{cor} \label{cor.generating-cogenerating torsion pair}
Let $\mathbf{t}=(\mathcal{T},\mathcal{F})$ be a torsion pair such
that either $\mathcal{F}$ is generating or $\mathcal{T}$ is
cogenerating. The heart $\mathcal{H}_\mathbf{t}$ is a Grothendieck
category if, and only if, $\mathcal{F}$ is closed under taking
direct limits in $\mathcal{G}$.
\end{cor}
\begin{proof}
We assume that $\mathcal{F}$ is closed under taking direct limits,
because, by theorem \ref{teor.AB5 heart of a torsion pair},  we only
need to prove the 'if' part of the statement.

 Suppose first that $\mathcal{F}$ is a generating class and let
$M\in\mathcal{H}_\mathbf{t}$ be any object, which we represent by a
complex $\cdots \longrightarrow 0\longrightarrow
M^{-1}\stackrel{d}{\longrightarrow}M^0\longrightarrow 0
\longrightarrow \cdots$. By fixing an epimorphism
$p:F^0\twoheadrightarrow M^0$ and taking the pullback of this
morphism along $d$, we may and shall assume that
$M^0=F^0\in\mathcal{F}$. But then $\text{Im}(d)$ is in
$\mathcal{F}$, which implies that $M^{-1}\in\mathcal{F}$ since we
have an exact sequence $0\rightarrow H^{-1}(M)\hookrightarrow
M^{-1}\stackrel{\bar{d}}{\longrightarrow}\text{Im}(d)\rightarrow 0$,
where the outer nonzero terms are in $\mathcal{F}$. Then condition b
of theorem \ref{teor.Grothendieck heart2} holds.

 Suppose that $\mathcal{T}$ is a cogenerating class. Then the injective objects of
$\mathcal{G}$ are in $\mathcal{T}$. By an argument dual to the one
followed in the previous paragraph, we  see that each object
$M\in\mathcal{H}_\mathbf{t}$ is isomorphic in
$\mathcal{D}(\mathcal{G})$ to a complex $\cdots \longrightarrow
0\longrightarrow T^{-1}\longrightarrow T^0\longrightarrow 0
\longrightarrow \cdots$, where $T^{-1}$ is injective and
$T^0\in\mathcal{T}$. Then condition c of theorem
\ref{teor.Grothendieck heart2} holds.
\end{proof}

The following is now a natural question that remains open.

\begin{ques}
Let $\mathbf{t}=(\mathcal{T},\mathcal{F})$ be a torsion pair in the
Grothendieck category $\mathcal{G}$ such that $\mathcal{F}$ is
closed under taking direct limits. Is the heart
$\mathcal{H}_\mathbf{t}$ a Grothendieck (equivalently, AB5) category?.
\end{ques}

\section{Tilting and cotilting torsion pairs revisited}

All throughout this section, the letter $\mathcal{G}$ denotes a
Grothendieck category. We refer the reader to section 2 for the
definition of (co)tilting object in an abelian category.

\begin{defi}
Let $\mathcal{A}$ be an AB3 (resp. AB3*) abelian category. Two
$1$-tilting (resp. $1$-cotilting) objects  of $\mathcal{A}$ are said
to be \emph{equivalent} when their associated torsion pairs
coincide.
\end{defi}

\begin{rem}
Recall that idempotents split in any abelian category. As a
consequence, two $1$-tilting objects $V$ and $V'$ are equivalent if,
and only if,  $\text{Add}(V)=\text{Add}(V')$. Similarly, two
$1$-cotilting objects $Q$ and $Q'$ are equivalent if, and only if,
$\text{Prod}(Q)=\text{Prod}(Q')$.
\end{rem}

With some additional hypotheses, one obtains the following more
familiar characterization of 1-tilting objects. The dual result
characterizes 1-cotilting objects in AB4*  categories with an
injective cogenerator.

\begin{prop} \label{prop.classical definition of tilting}
Let $\mathcal{A}$ be an AB4 abelian category with a projective
generator and let $V$ be any object of $\mathcal{A}$. Consider the
following assertions:

\begin{enumerate}
\item $V$ is a 1-tilting object.
\item The following conditions hold:

\begin{enumerate}
\item There exists an exact sequence $0\rightarrow P^{-1}\longrightarrow P^0\longrightarrow V\rightarrow
0$ in $\mathcal{A}$, where the $P^k$ are projective;
\item $\text{Ext}_\mathcal{A}^1(V,V^{(I)})=0$, for all sets $I$;
\item for some (resp. every) projective generator $P$ of
$\mathcal{A}$, there is an exact sequence

\begin{center}
$0\rightarrow P\longrightarrow V^0\longrightarrow V^1\rightarrow 0$,
\end{center}
where $V^i\in\text{Add}(V)$ for $i=0,1$.
\end{enumerate}
\end{enumerate}
The implications $2)\Longrightarrow 1)$,  $1)\Longrightarrow 2.b$
and $1)\Longrightarrow 2.c$ hold. When $\mathcal{A}$ has enough
injectives, assertions 1 and 2 are equivalent.
\end{prop}
\begin{proof}
As in module categories (see \cite[Proposition 1.3]{CT}, and also
\cite[Section 2]{C}).
\end{proof}

Recall that an object $X\in\text{Ob}(\mathcal{G})$ is called
\emph{self-small} when the canonical morphism
$\text{Hom}_\mathcal{G}(X,X)^{(I)}\longrightarrow\text{Hom}_\mathcal{G}(X,X^{(I)})$
is an isomorphism, for each set $I$. A consequence of the results in
the previous section is the following:

\begin{prop} \label{prop.Grothendieck tilting hearts}
Let $\mathcal{G}$ be a Grothendieck category and let
$\mathbf{t}=(\mathcal{T},\mathcal{F})$ be a torsion pair in
$\mathcal{G}$. Consider the following assertions:

\begin{enumerate}
\item $\mathbf{t}$ is a tilting torsion pair induced by a self-small
1-tilting object;
\item $\mathcal{T}$ is a cogenerating class and the heart $\mathcal{H}_\mathbf{t}$ is a module category;
\item $\mathcal{T}$ is a cogenerating class and $\mathcal{H}_\mathbf{t}$ is a Grothendieck category with a projective generator;
\item  $\mathbf{t}$ is a tilting torsion pair such that $\mathcal{H}_\mathbf{t}$ is an AB5 abelian category;
\item $\mathbf{t}$ is a tilting torsion pair such that $\mathcal{F}$ is
closed under taking direct limits in $\mathcal{G}$.
\end{enumerate}

Then the implications $1)\Longleftrightarrow 2)\Longrightarrow
3)\Longleftrightarrow 4)\Longleftrightarrow 5)$ hold.
\end{prop}
\begin{proof}
Let $V$ be any 1-tilting object and
$\mathbf{t}=(\mathcal{T},\mathcal{F})$ its associated torsion pair.
If $F\in\mathcal{F}$ is any object, then its injective envelope
$E(F)$ and its first cosyzygy $\Omega^{-1}(F)=\frac{E(F)}{F}$ are in
$\mathcal{T}:=\text{Gen}(V)=\text{Ker}(\text{Ext}_\mathcal{G}^1(V,?))$.
It follows that
$\text{Ext}_{\mathcal{H}_t}^1(V[0],F[1])=\text{Ext}_\mathcal{G}^2(V,F)\cong\text{Ext}_\mathcal{G}^1(V,\Omega^{-1}(F))=0$.
On the other hand, we have
$\text{Ext}_{\mathcal{H}_\mathbf{t}}^1(V[0],T[0])\cong\text{Ext}_\mathcal{G}^1(V,T)=0$,
for all $T\in\mathcal{T}$. It follows that $V[0]$ is a projective
object of $\mathcal{H}_\mathbf{t}$. On the other hand, by the proof
of theorem \ref{teor.Grothendieck heart2} under its condition c, we
know that $V[0]$ is a generator of $\mathcal{H}_\mathbf{t}$.

$1)\Longrightarrow 2)$ Let the 1-tilting object  $V$ be self-small.
We then have an isomorphism

\begin{center}
$\text{Hom}_{\mathcal{H}_\mathbf{t}}(V[0],V[0])^{(I)}\cong\text{Hom}_\mathcal{G}(V,V)^{(I)}\cong\text{Hom}_\mathcal{G}(V,V^{(I)})
\cong\text{Hom}_{\mathcal{H}_\mathbf{t}}(V[0],V[0]^{(I)})$,
\hspace*{0.5cm} (*)
\end{center}
for each set $I$. That is, $V[0]$ is a self-small object of
$\mathcal{H}_\mathbf{t}$ and, since it is a projective generator, it
is easily seen that $V[0]$ is a compact object of
$\mathcal{H}_\mathbf{t}$. Then $V[0]$ is a progenerator of
$\mathcal{H}_\mathbf{t}$ and $\mathcal{H}_\mathbf{t}$ is a module
category (see \cite[Corollary 3.6.4]{Po}).

$3)\Longrightarrow 4)$ By the proofs of corollary
\ref{cor.generating-cogenerating torsion pair} and theorem
\ref{teor.Grothendieck heart2}, we know that $\mathcal{T}[0]$
generates $\mathcal{H}_\mathbf{t}$. If  $G$ is a projective
generator of $\mathcal{H}_\mathbf{t}$, then it is necessarily of the
form $G=V[0]$, where $V\in\mathcal{T}$. It easily follows from this
that $\mathcal{T}=\text{Pres}(V)=\text{Gen}(V)$ and, hence, that
$\mathcal{F}=\text{Ker}(\text{Hom}_\mathcal{G}(V,?))$.

From the projectivity of $V[0]$ in $\mathcal{H}_\mathbf{t}$ we get
that
$0=\text{Ext}_{\mathcal{H}_\mathbf{t}}^1(V[0],T[0])=\text{Ext}_\mathcal{G}^1(V,T)$,
for each $T\in\mathcal{T}$. Therefore we get that
$\text{Gen}(V)\subseteq\text{Ker}(\text{Ext}_\mathcal{G}^1(V,?))$.
The proof of the converse inclusion is entirely dual to the
corresponding one for cotilting objects, which is done in the
implication $1)\Longrightarrow 3)$ of proposition
\ref{prop.characterization of cotilting pairs}

$2)\Longrightarrow 1)$ By the argument in the implication
$3)\Longrightarrow 4)$, we can assume that $\mathbf{t}$ is a tilting
torsion pair induced by a 1-tilting object $V$ such that $V[0]$ is a
progenerator of $\mathcal{H}_\mathbf{t}$. From the fact that $V[0]$
is compact in $\mathcal{H}_\mathbf{t}$ we derive that the
isomorphism (*) above still holds. Then $V$ is self-small.

The implication $2)\Longrightarrow 3)$ is clear and
$4)\Longrightarrow 5)$ follows from theorem \ref{teor.AB5 heart of a
torsion pair}.

$5)\Longrightarrow 3)$ That
$\mathcal{T}=\text{Ker}(\text{Ext}_\mathcal{G}^1(V,?))$ is
cogenerating is clear since it contains all injective objects. The
fact that $\mathcal{H}_\mathbf{t}$ is a Grothendieck category
follows then from corollary \ref{cor.generating-cogenerating torsion
pair}. Finally, by the first paragraph of this proof, we know that
$V[0]$ is a projective generator of $\mathcal{H}_\mathbf{t}$.
\end{proof}

\begin{exem}
A projective generator  $P$ of  $\mathcal{G}$ is always a 1-tilting
object. However $P$ is self-small if, and only if, it is compact.
Therefore if $\mathcal{G}$ has a projective generator but is not a
module category, then the trivial torsion pair
$\mathbf{t}=(\mathcal{G},0)$ satisfies assertion 5, but not
assertion 2 of last proposition.
\end{exem}

 However, the following is a natural
question whose answer seems to be unknown.

\begin{ques}
Let $R$ be a ring and $V$ be a 1-tilting $R$-module  such that
$\text{Ker}(\text{Hom}_R(V,?))$ is closed under taking direct limits
in $R-\text{Mod}$. Is $V$ equivalent to a self-small  1-tilting
module?. Note that a 1-tilting $R$-module is self-small if, and only
if, it is finitely presented (cf. \cite[Proposition 1.3]{CT}).
\end{ques}

If $I$ is any set,   then the product  functor $\prod
:\mathcal{G}^I=[I,\mathcal{G}]\longrightarrow\mathcal{G}$ is left
exact, but need not be right exact. We shall denote by
$\prod^1:=\prod_{i\in I}^1:\mathcal{G}^I\longrightarrow\mathcal{G}$
its first right derived functor. Given a family $(X_i)_{i\in I}$, we
have that $\prod_{i\in I}^1X_i$ is the cokernel of the canonical
morphism $\prod_{i\in I}E(X_i)\longrightarrow\prod_{i\in
I}\frac{E(X_i)}{X_i}$.

\begin{defi} \label{def.(strong) cotilting object}
 An object $Q$ of the
Grothendieck category $\mathcal{G}$ will be called \emph{strong
1-cotilting}  when it is $1$-cotilting and  $\prod_{i\in I}^1Q$ is
in $\mathcal{F}:=\text{Cogen}(Q)$, for each set $I$. The
corresponding torsion pair is called a \emph{strong cotilting
torsion pair}.
\end{defi}

 Let
$\mathcal{G}$ be  a locally finitely presented Grothendieck category
in  this paragraph. An exact sequence $0\rightarrow
X\stackrel{u}{\longrightarrow}Y\stackrel{p}{\longrightarrow}
Z\rightarrow 0$ is called \emph{pure-exact} when it is kept exact
when applying the functor $\text{Hom}_\mathcal{G}(U,?)$, for every
finitely presented object $U$. An object $E$ of $\mathcal{G}$ is
\emph{pure-injective} when $\text{Hom}_\mathcal{G}(?,E)$ preserves
the exactness of all pure-exact sequences (see, e.g. \cite{CB} or
\cite{Pr} for details).

\begin{lemma} \label{lem.cotilting-strong-pureinjective}
Let $Q$ be a 1-cotilting object of $\mathcal{G}$. The following
assertions hold:

\begin{enumerate}
\item If $\mathcal{G}$ is AB4* then $Q$ is strong 1-cotilting and the class $\mathcal{F}:=\text{Cogen}(Q)$ is generating;
\item If $\mathcal{G}$ is locally finitely presented, then $Q$ is a
pure-injective object and $\mathcal{F}$ is closed under taking
direct limits in $\mathcal{G}$. In particular, the equivalence
classes of 1-cotilting objects form a set.
\item If there exists a  strong 1-cotilting object  $Q'$
which is equivalent to $Q$, then $Q$ is itself strong 1-cotilting.
\end{enumerate}
\end{lemma}
\begin{proof}
1) That $Q$ is strong 1-cotilting is straightforward since $\prod^1$
vanishes when $\mathcal{G}$ is AB4*. In order  to prove that
$\mathcal{F}$ is generating, it is enough to prove that all
injective objects of $\mathcal{G}$ are homomorphic image of objects
in $\mathcal{F}$. Indeed, if that is the case and $U$ is any object
of $\mathcal{G}$, then fixing an epimorphism $p:F\twoheadrightarrow
E(U)$, with $F\in\mathcal{F}$, and pulling it back along the
inclusion $U\hookrightarrow E(U)$, we obtain an epimorphism
$F'\twoheadrightarrow U$, for some $F'\in\mathcal{F}$. But, by the
dual of proposition \ref{prop.classical definition of tilting},  we
get that each injective cogenerator $E$ is an homomorphic image of
an object in $\mathcal{F}=\text{Cogen}(Q)$.

2) We follow Bazzoni's argument (see \cite{B}) and see that it also
works  in our context. First of all, note that an object
 $Y$ of $\mathcal{G}$ is pure-injective if, and only
if, for every set $S$, each morphism $f:Y^{(S)}\longrightarrow Y$
extends to $Y^S$ (cf. \cite[Theorem 1]{CB}, \cite[Theorem 5.4]{Pr}).
Then lemmas 2.1, 2.3 and 2.4, together with corollary 2.2 of
[op.cit] are valid here. We next consider proposition 2.5 in that
paper. For it to work in our situation, we just need to check that
if $\lambda$ is an infinite cardinal and $(A_\beta
)_{\beta\in\lambda^{\aleph_0}}$ is a family of $\lambda^{\aleph_0}$
subsets of $\lambda$ such that $A_\alpha\cap A_\beta$ is finite, for
all $\alpha\neq\beta$, then the images of the compositions
$M^{A_\beta}\hookrightarrow
M^\lambda\stackrel{pr}{\twoheadrightarrow}\frac{M^\lambda}{M^{(\lambda)}}$
form a family $(Y_\beta )_{\beta\in\lambda^{\aleph_0}}$ of
subobjects of $\frac{M^\lambda}{M^{(\lambda)}}$ which have direct
sum. Note that this amounts to prove that, for each
$\beta\in\lambda^{\aleph_0}$, we have $(M^{(\lambda
)}+M^{A_\beta})\cap (M^{(\lambda
)}+\sum_{\gamma\neq\beta}M^{A_\gamma})=M^{(\lambda )}$.  By the
modular law, which is a consequence of the AB5 condition, we need to
prove that $M^{(\lambda )}+[(M^{(\lambda )}+M^{A_\beta})\cap
(\sum_{\gamma\neq\beta}M^{A_\gamma})]=M^{(\lambda )}$. That is, we
need to prove that $[(M^{(\lambda )}+M^{A_\beta})\cap
(\sum_{\gamma\neq\beta}M^{A_\gamma})]\subseteq M^{(\lambda )}$.

For simplicity, call an object $X$ of $\mathcal{G}$ finitely
generated when it is  homomorphic image of a finitely presented one.
Clearly, $\text{Hom}_\mathcal{G}(X,?)$ preserves direct union of
subobjects in that case.  Due to the locally finite presented
condition of $\mathcal{G}$, each object of this category is a
directed union of finitely generated subobjects. Our task reduces to
prove that if $X$ is a finitely generated subobject of
$[(M^{(\lambda )}+M^{A_\beta})\cap
(\sum_{\gamma\neq\beta}M^{A_\gamma})]$, then $X\subseteq M^{(\lambda
)}$. To do that, we denote by $\text{Supp}(X)$ the set of
$\alpha\in\lambda$ such that the composition $X\hookrightarrow
M^\lambda\stackrel{\pi_\alpha}{\longrightarrow}M$ is nonzero, where
$\pi_\alpha :M^\lambda\twoheadrightarrow M$ is the
$\alpha$-projection, for each $\alpha\in\lambda$. Bearing in mind
that $\sum_{\gamma\neq\beta}M^{A_\gamma}=\bigcup_{F}(\sum_{\gamma\in
F}M^{A_\gamma})$, with $F$ varying on the set of finite subsets of
$\lambda\setminus\{\beta\}$, the AB5 condition (see \cite[V.1]{S})
gives: \begin{center} $X=X\cap (\sum_{\gamma\neq\beta}M^{A_\gamma})=X\cap
[\bigcup_{F}(\sum_{\gamma\in F}M^{A_\gamma})]=\bigcup_{F}[X\cap
(\sum_{\gamma\in F}M^{A_\gamma})]$, \end{center}
and the finitely generated condition of $X$ implies that $X=X\cap
(\sum_{\gamma\in F}M^{A_\gamma})\subseteq\sum_{\gamma\in
F}M^{A_\gamma}$, for some $F\subset\lambda\setminus\{\beta\}$
finite. As a consequence, we have
$\text{Supp}(X)\subseteq\bigcup_{\gamma\in F}A_\gamma$.

On the other hand, exactness of direct limits gives that
$M^{(\lambda)}+M^{A_\beta}=\bigcup_{F'\subset\lambda\text{,
}F'\text{ finite}}[M^{(F')}+M^{A_\beta}]$ and, again by the AB5
condition and the finitely generated condition of $X$, we get that
$X\subseteq M^{(F')}+M^{A_\beta}$, for some finite subset
$F'\subseteq\lambda$. This implies that $\text{Supp}(X)\subseteq
F'\cup A_\beta$. Together with the conclusion of the previous
paragraph, we then get that $\text{Supp}(X)\subseteq (F'\cup
A_\beta)\cap (\bigcup_{\gamma\in F}A_\gamma)$, and so
$\text{Supp}(X)$ is a finite set and $X\subseteq M^{(\lambda )}$.

The previous two paragraphs show that proposition 2.5 and corollary
2.6  of \cite{B} go on in our context. To complete Bazzoni's
argument in our situation, it remains  to check the truth of her
lemma 2.7. This amount to prove that if $0\neq
X\subset\frac{M^\lambda}{M^{(\lambda)}}$ is a finitely generated
subobject, then there exists a morphism
$f:\frac{M^\lambda}{M^{(\lambda)}}\longrightarrow Q$ such that
$f(X)\neq 0$. Indeed, take the subobject $\hat{X}$ of $M^\lambda$
such that $X=\frac{\hat{X}}{M^{(\lambda )}}$. Then
$\text{Supp}(\hat{X})$ is an infinite subset of $\lambda$, and this
allows us to fix a subset $A\subseteq\text{Supp}(\hat{X})$ such that
$|A|=\aleph_0$. If now $p:M^\lambda\twoheadrightarrow M^A$ is the
canonical projection, then we get an induced morphism
$\bar{p}:\frac{M^\lambda}{M^{(\lambda)}}\longrightarrow\frac{M^A}{M^{(A)}}$
such that $\bar{p}(X)\neq 0$. Since
$\frac{M^A}{M^{(A)}}\in\text{Cogen}(Q)$ we get a morphism
$h:\frac{M^A}{M^{(A)}}\longrightarrow Q$ such that
$h(\bar{p}(X))\neq 0$. We take $f=h\circ\bar{p}$ and have $f(X)\neq
0$, as desired. Theferore $Q$ is pure-injective.

Finally, if $(F_i)_{i\in I}$ is a direct system in
$\mathcal{F}=\text{Ker}(\text{Ext}_\mathcal{G}^1(?,Q))$ then the
induced sequence $0\rightarrow K\longrightarrow\coprod_{i\in
I}F_i\stackrel{p}{\longrightarrow}\varinjlim F_i\rightarrow 0$ is
pure-exact. The fact that $\mathcal{F}=\varinjlim\mathcal{F}$
follows, as in module categories, by applying to this sequence the
long exact sequence of $\text{Ext}(?,Q)$. Moreover, equivalence
classes of 1-cotilting objects are in bijection with the cotilting
torsion pairs. Then lemma \ref{lem.F=limF implica T=Pres(V)}
applies.

3) For any set $I$, the functor
$\prod^1:[I,\mathcal{G}]\longrightarrow\mathcal{G}$ is additive.
This and the fact that $\mathcal{F}=\text{Cogen}(Q)$ is closed under
direct summands imply that the class of objects $X$ such that
$\prod_I^1(X)\in\mathcal{F}$ is closed under taking direct summands.
This reduces the proof to check that if $Q$ is strong 1-cotilting,
then $Q^J$ is strong 1-cotilting, for every set $J$. To do that, for
such a set $J$,  we consider the following commutative diagram,
where the upper right square is bicartesian and  where the vertical
sequences are split exacts.

$$\xymatrix{0 \ar[r] & Q^{J} \ar[r] \ar@{=}[d] & E(Q^J) \ar[r] \ar@{^{(}->}[d] \pushoutcorner & \frac{E(Q^J)}{Q^J} \ar[r] \ar@{^{(}->}[d]& 0 \\
0 \ar[r] & Q^{J} \ar[r] & E(Q)^J \ar[r] \ar@{>>}[d] &
\pullbackcorner \hspace{0.2 cm} \frac{E(Q)^{J}}{Q^J}  \ar[r]
\ar@{>>}[d] & 0 \\ & & E \ar@{=}[r] & E&}$$

For each set $I$, the product functor $\prod
:[I,\mathcal{G}]\longrightarrow\mathcal{G}$ preserves pullbacks
since it is left exact. It also preserves split short exact
sequences. It follows that the central square of the following
commutative diagram is bicartesian since the cokernels of its two
vertical arrows are isomorphic:

$$\xymatrix{0 \ar[r] & (Q^{J})^{I} \ar[r] \ar@{=}[d] & (E(Q^J))^{I} \ar[r] \ar@{^{(}->}[d] \pushoutcorner & (\frac{E(Q^J)}{Q^J})^{I} \ar[r] \ar@{^{(}->}[d] & N \ar[r] \ar[d]^{u} &  0 \\
0 \ar[r] & (Q^{J})^{I} \ar[r] & (E(Q)^J)^{I} \ar[r] &
\hspace{0.35cm}(\frac{E(Q)^{J}}{Q^J})^{I} \pullbackcorner \ar[r] &
N' \ar[r] & 0 }$$

Then $u$ is an isomorphism, which allows us to put $N'=N$ and
$u=1_N$. Our goal is to prove that $N$ is in $\mathcal{F}$. But the
lower row of the last diagram fits in a new commutative diagram with
exact rows, where the two left vertical arrows are isomorphisms:

$$\xymatrix{0 \ar[r] & (Q^{J})^{I} \ar[r] \ar[d]^{\wr} & (E(Q)^J)^{I} \ar[r] \ar[d]^{\wr} & (\frac{E(Q)^J}{Q^J})^{I} \ar[r] \ar[d] & N \ar[r] \ar[d]^{v} &  0 \\
0 \ar[r] & Q^{J \times I} \ar[r] & E(Q)^{J \times I} \ar[r] &
(\frac{E(Q)}{Q})^{J \times I} \ar[r] & F \ar[r] & 0 }$$

Due to the left exactness of the product functor, the second
vertical arrow from right to left is a monomorphism. It then follows
that $v$ is also a monomorphism. But $F$ is in $\mathcal{F}$,
because $Q$ is strong 1-cotilting. We then get that
$N\in\mathcal{F}$, as desired.

\end{proof}

\begin{prop} \label{prop.characterization of cotilting pairs}
Let $\mathcal{G}$ be a Grothendieck category and let
$\mathbf{t}=(\mathcal{T},\mathcal{F})$ be a  torsion pair in
$\mathcal{G}$ such that $\mathcal{F}$ is a generating class.
Consider the following assertions:

\begin{enumerate}
\item The heart
$\mathcal{H}_\mathbf{t}$ is a Grothendieck category;
\item  $\mathcal{F}$ is closed under taking direct limits in $\mathcal{G}$;
\item $\mathbf{t}$ is a (strong) cotilting torsion pair.
\end{enumerate}

Then the implications $1)\Longleftrightarrow 2)\Longrightarrow 3)$
hold. When $\mathcal{G}$ is locally finitely presented, all
assertions are equivalent.
\end{prop}
\begin{proof}
Let $(F_i)_{i\in I}$ be a family in $\mathcal{F}$. Note that, in
order to calculate the product
$\prod_{\mathcal{D}(\mathcal{G})}F_i[1]$ in
$\mathcal{D}(\mathcal{G})$, we first replace each $F_i$ by an
injective resolution, which we assume to be the minimal one, and
then take products in $\mathcal{C}(\mathcal{G})$. When $\mathcal{G}$
is not AB4*, the resulting complex can have nonzero homology in
degrees $>0$. However $\prod_{\mathcal{D}(\mathcal{G})}F_i[1]$ is
$\mathcal{U}_\mathbf{t}^\perp [1]$ and, using lemma
\ref{lem.adjoints to the inclusions from heart}, we easily see that
$P:=\prod_{\mathcal{H}_\mathbf{t}}F_i[1]=\tau_\mathcal{U}(\prod_{\mathcal{D}(\mathcal{G})}F_i[1])$.
Due to the fact that
$\mathcal{D}^{>0}(\mathcal{G})\subseteq\mathcal{U}_\mathbf{t}^\perp$,
we have a canonical isomorphism $P\cong\tau_\mathcal{U}(\tau^{\leq
0}(\prod_{\mathcal{D}(\mathcal{G})}F_i[1]))$, where $\tau^{\leq 0}$
denotes the left truncation with respect to the canonical
t-structure $(\mathcal{D}^{\leq 0}(G);\mathcal{D}^{\geq
0}(\mathcal{G}))$. But $\tau^{\leq
0}(\prod_{\mathcal{D}(\mathcal{G})}F_i[1])$ is quasi-isomorphic to
the complex

\begin{center}
$\cdots \longrightarrow 0\longrightarrow\prod_{i\in
I}E(F_i)\stackrel{can}{\longrightarrow}\prod_{i\in
I}\frac{E(F_i)}{F_i}\longrightarrow 0 \longrightarrow \cdots$
\end{center}
concentrated in degrees $-1$ and $0$. It follows easily  that
$H^{-1}(P)\cong\prod_{i\in I}F_i$ and $H^0(P)\cong t(\prod_{i\in
I}^1F_i)$.

$1)\Longleftrightarrow 2)$ is a direct consequence of corollary
\ref{cor.generating-cogenerating torsion pair}.

$1),2)\Longrightarrow 3)$  By the proofs of corollary
\ref{cor.generating-cogenerating torsion pair} and theorem
\ref{teor.Grothendieck heart2}, we know that $\mathcal{F}[1]$
cogenerates $\mathcal{H}_\mathbf{t}$. Then any injective cogenerator
of $\mathcal{H}_\mathbf{t}$ is of the form $Q[1]$, for some
$Q\in\mathcal{F}$. Fixing such a $Q$, we get that $Q[1]^S$ is an
injective cogenerator of $\mathcal{H}_\mathbf{t}$, for each set $S$.
This in turn implies that $Q[1]^S\in\mathcal{F}[1]$. By the initial
paragraph of this proof, we  get that $t(\prod_{s\in S}^1Q)=0$ which implies that $Q[1]^S\cong Q^S[1]$. From this we immediately derive  that
$\mathcal{F}=\text{Copres}(Q)=\text{Cogen}(Q)$. 

We fix an object $Q\in\mathcal{F}$ such that $Q[1]$ is an injective cogenerator of $\mathcal{H}_\mathbf{t}$ and pass to prove that $Q$ is 1-cotilting. The equality  $t(\prod_{s\in S}^1Q)=0$ proved above will give that $Q$ is  strong 1-cotilting and the proof of this implication will be finished. First, the injectivity of $Q[1]$ in
$\mathcal{H}_\mathbf{t}$ implies that
$\text{Ext}_{\mathcal{G}}^1(F,Q)\cong\text{Ext}_{\mathcal{H}_\mathbf{t}}^1(F[1],Q[1])=0$.
From this equality we derive that
$\text{Cogen}(Q)=\mathcal{F}\subseteq\text{Ker}(\text{Ext}_{\mathcal{G}}^1(?,Q))$.

Let now $Z$ be any object in
$\text{Ker}(\text{Ext}_\mathcal{G}^1(?,Q))$. The generating
condition of $\mathcal{F}$ gives us an epimorphism
$p:F\twoheadrightarrow Z$, with $F\in\mathcal{F}$. Putting
$F':=\text{Ker}(p)$, we then get the following commutative diagram,
where the upper right square is bicartesian:

$$\xymatrix{0 \ar[r] & F' \ar@{=}[d] \ar[r] & \tilde{F} \pushoutcorner \ar[r] \ar@{^{(}->}[d] & t(Z) \ar[r] \ar@{^{(}->}[d] & 0 \\ 0 \ar[r] & F' \ar[r] & F \ar[r]^{p} \ar@{>>}[d]& Z \pullbackcorner \ar[r] \ar@{>>}[d] & 0 & (*) \\ && \frac{Z}{t(Z)} \ar@{=}[r] & \frac{Z}{t(Z)}}$$

If we apply the long exact sequence of $\text{Ext}(?,Q)$ to the
central row and the central column of the last diagram, we get the
following commutative diagram with exact rows:

$$\xymatrix{0 \ar[r] & \text{Ext}^{2}_{\mathcal{G}}(\frac{Z}{t(Z)}, Q) \ar[r] \ar[d]^{\alpha} & \text{Ext}^{2}_{\mathcal{G}}(F,Q) \ar[r] \ar@{=}[d] & \text{Ext}^{2}(\tilde{F},Q) \ar[d] \\ 0 \ar[r] & \text{Ext}^{2}_{\mathcal{G}}(Z,Q) \ar[r] & \text{Ext}^{2}_{\mathcal{G}}(F,Q) \ar[r] & \text{Ext}^{2}_{\mathcal{G}}(F', Q)}$$

It follows that $\alpha
:\text{Ext}_\mathcal{G}^2(\frac{Z}{t(Z)},Q)\longrightarrow\text{Ext}_\mathcal{G}^2(Z,Q)$
is a monomorphism. If we now apply the long exact sequence of
$\text{Ext}$ to the right column of the diagram ($\ast$) above, we
obtain that the canonical morphism
$\text{Ext}_\mathcal{G}^1(Z,Q)\longrightarrow\text{Ext}_\mathcal{G}^1(t(Z),Q)$
is an epimorphism, which implies that
$\text{Ext}_\mathcal{G}^1(t(Z),Q)=0$ due to the choice of $Z$. It
follows from this that
$\text{Hom}_{\mathcal{H}_\mathbf{t}}(t(Z)[0],Q[1])=0$, which implies
that $t(Z)=0$ since $Q[1]$ is a cogenerator of
$\mathcal{H}_\mathbf{t}$. We then get
$\text{Ker}(\text{Ext}_\mathcal{G}^1(?,Q))\subseteq\mathcal{F}=\text{Cogen}(Q)$
and, hence, this last inclusion is an equality.

$3)\Longrightarrow 2)$ (assuming that $\mathcal{G}$ is locally
finitely presented) follows directly from lemma
\ref{lem.cotilting-strong-pureinjective}(2).
\end{proof}

\begin{rems} \label{rem. cotilting = strong cotilting in locally fp}
\begin{enumerate}
\item  When $\mathcal{G}$ is locally finitely presented, by proposition \ref{prop.characterization of cotilting pairs} and
lemma \ref{lem.cotilting-strong-pureinjective}(3), we know that if
$Q$ is a $1$-cotilting object such that
$\mathcal{F}=\text{Cogen}(Q)$ is a generating class of
$\mathcal{G}$, then $Q$ is strong 1-cotilting.
\item When $\mathcal{G}$ is AB4*, it follows from  lemma \ref{lem.cotilting-strong-pureinjective} and
proposition \ref{prop.characterization of cotilting pairs} that the
following assertions are equivalent for a torsion pair
$\mathbf{t}=(\mathcal{T},\mathcal{F})$:

\begin{enumerate}
\item $\mathcal{F}$ is generating and closed under taking direct
limits in $\mathcal{G}$ \item $\mathbf{t}$
 is a strong cotilting torsion pair
such that $\mathcal{F}$ is closed under taking direct limits in
$\mathcal{G}$.
\end{enumerate}
\end{enumerate}

\end{rems}

The following  direct consequence of  proposition
\ref{prop.characterization of cotilting pairs}  extends
\cite[Corollary 6.3]{CMT}.

\begin{cor} \label{cor.tilting pairs are cotilting}
Let $V$ be a 1-tilting object such that
$\mathcal{F}=\text{Ker}(\text{Hom}_\mathcal{G}(V,?))$ is closed
under taking direct limits in $\mathcal{G}$ (e.g., when $V$ is
self-small). If $\mathcal{F}$ is a generating class, then the
torsion pair
$\mathbf{t}=(\text{Gen}(V),\text{Ker}(\text{Hom}_\mathcal{G}(V,?)))$
is strong cotilting.
\end{cor}

We now  make explicit what proposition \ref{prop.characterization of
cotilting pairs} says in case $\mathcal{G}$ is locally finitely
presented and AB4* (see lemma
\ref{lem.cotilting-strong-pureinjective}):

\begin{cor} \label{cor.final}
Let $\mathcal{G}$ be locally finitely presented and AB4* and let
$\mathbf{t}=(\mathcal{T},\mathcal{F})$ be a torsion pair in
$\mathcal{G}$. The following assertions are equivalent:

\begin{enumerate}
\item $\mathcal{F}$ is a generating class and the heart
$\mathcal{H}_\mathbf{t}$ is a Grothendieck category;
\item $\mathcal{F}$ is a generating class closed under taking direct
limits in $\mathcal{G}$;
\item $\mathbf{t}$ is a cotilting torsion pair.
\end{enumerate}
\end{cor}

\begin{rems}
\begin{enumerate}
\item  The last corollary
 extends the main result of \cite{CG} (see \cite[Theorem 6.2]{M}).
 \item All Grothendieck categories with enough projectives are AB4*,
 but the converse is not true and there even exist AB4* Grothendieck categories with no nonzero projective objects (see \cite[Theorem
 4.1]{Roo}).

\end{enumerate}
 \end{rems}

In the particular case when $\mathcal{G}=R-\text{Mod}$, for a ring
$R$, a torsion pair $\mathbf{t}$ has the property that $\mathcal{F}$
is generating if, and only if, $\mathbf{t}$ is \emph{faithful}. That
is, if and only if, $R\in\mathcal{F}$. In a sense, Bazzoni's result
(see \cite[Theorem 2.8]{B}) states that if $\mathbf{t}$ is a
cotilting torsion pair in $R-\text{Mod}$ then its torsionfree class
is closed under taking direct limits (and $\mathbf{t}$ is faithful).
By corollary \ref{cor.final}, we also have the converse, which, as
Silvana Bazzoni pointed out to us,  can be also deduced from
\cite[Corollary 8.1.10]{GT}:

\begin{cor} \label{cor.Bazzoni's converse}
Let $R$ be a ring. A torsion pair
$\mathbf{t}=(\mathcal{T},\mathcal{F})$ in $R-\text{Mod}$ is
cotilting if, and only if, it is is faithful and $\mathcal{F}$ is
closed under taking direct limits.
\end{cor}

 Recall that a Grothendieck category is called \emph{locally
 noetherian} when it has a set of noetherian generators. The
 following result extends \cite[Theorem A]{BK} (see lemma \ref{lem.cotilting-strong-pureinjective}):

 \begin{cor} \label{cor.Buan-Krause}
Let $\mathcal{G}$ be a locally finitely presented Grothendieck
category which is locally noetherian and denote by
$\text{fp}(\mathcal{G})$ its full subcategory of finitely presented
(=noetherian) objects.   There is a one-to-one correspondence
between:

\begin{enumerate}
\item The torsion pairs $(\mathcal{X},\mathcal{Y})$ of $\text{fp}(\mathcal{G})$ such that $\mathcal{Y}$ contains a set of generators;
\item The equivalence classes of  1-cotilting objects $Q$ of
$\mathcal{G}$ such that $\text{Cogen}(Q)$ is a generating class.

When, in addition, $\mathcal{G}$ is an AB4* category, they are also
in bijection with
\item The equivalence classes of 1-cotilting objects of $\mathcal{G}$.
\end{enumerate}
The map from 1 to 2 takes $(\mathcal{X},\mathcal{Y})$ to the
equivalence class $[Q]$, where $Q$ is a 1-cotilting object such that
$\text{Cogen}(Q)=\{F\in\text{Ob}(\mathcal{G}):$
$\text{Hom}_\mathcal{G}(X,F)=0\text{, for all }X\in\mathcal{X}\}$.
The map from 2 to 1 takes $[Q]$ to
$(\text{Ker}(\text{Hom}_\mathcal{G}(?,Q))\cap\text{fp}(\mathcal{G}),\text{Cogen}(Q)\cap\text{fp}(\mathcal{G}))$.
 \end{cor}
 \begin{proof}
 By lemma \ref{lem.cotilting-strong-pureinjective}, when $\mathcal{G}$ is
 AB4*, the classes in 2) and 3) are the same. We then prove the
 bijection between 1) and 2). Given a torsion pair
 $(\mathcal{X},\mathcal{Y})$ in $\text{fp}(\mathcal{G})$ as in 1), by \cite[Lemma 4.4]{CB}, we know that the torsion pair in
$\mathcal{G}$ generated by $\mathcal{X}$ is
$\mathbf{t}=(\mathcal{T},\mathcal{F})=(\varinjlim\mathcal{X},\varinjlim\mathcal{Y})$.
It follows from proposition \ref{prop.characterization of cotilting
pairs}  that $\mathbf{t}$ is a cotilting torsion pair. We then get a
 1-cotilting object $Q$, uniquely determined up to equivalence,  such that
$\text{Cogen}(Q)=\varinjlim\mathcal{Y}=\{F\in\mathcal{G}:$
$\text{Hom}_\mathcal{G}(X,F)=0,\text{ for all }X\in\mathcal{X}\}$.

Suppose now that $Q$ is any 1-cotilting object and its associated
torsion pair $\mathbf{t}=(\mathcal{T},\mathcal{F})$ has the property
that $\mathcal{F}$ is a generating class.  Then
$(\mathcal{X},\mathcal{Y}):=(\mathcal{T}\cap\text{fp}(\mathcal{G}),\mathcal{F}\cap\text{fp}(\mathcal{G}))$
is a torsion pair in $\text{fp}(\mathcal{G})$. We claim that
$\mathcal{Y}$ contains a set of generators. Indeed, by hypothesis
$\mathcal{F}$ contains a generator $G$ of $\mathcal{G}$. By the
locally noetherian condition of $\mathcal{G}$, we know that $G$ is
the direct union of its noetherian (=finitely presented) subobjects.
Then the finitely presented subobjects of $G$ form a set of
generators of $\mathcal{G}$ which is in $\mathcal{Y}$, thus settling
our claim.

On the other hand,  in the situation of last paragraph, we have that
$(\varinjlim\mathcal{X},\varinjlim\mathcal{Y})$ is a torsion pair in
$\mathcal{G}$ such that $\varinjlim\mathcal{X}\subseteq\mathcal{T}$
and $\varinjlim\mathcal{Y}\subseteq\mathcal{F}$. Then these
inclusions are equalities and, hence,  $\mathbf{t}$ is the image of
$(\mathcal{X},\mathcal{Y})$ by the map from 1 to 2 defined in the
first  paragraph of this proof. That the two maps, from 1 to 2 and
from 2 to 1,  are mutually inverse is then a straightforward
consequence of this.
\end{proof}

 \begin{exems}
 The following are examples of locally finitely presented
 Grothendieck categories. So proposition \ref{prop.characterization of cotilting pairs} and corollary \ref{cor.final} apply to
 them.

 \begin{enumerate}
 \item Each category of additive functors $\mathcal{A}\longrightarrow\text{Ab}$,
 for every skeletally small additive category
 $\mathcal{A}$. Equivalently (see \cite[Proposition II.2]{G}),
 each category $R-\text{Mod}$ of unitary modules over a ring $R$ with enough idempotents.   \item The category $\text{Qcoh}(\mathbf{X})$ of
 quasi-coherent sheaves over any quasi-compact and quasi-separated
 algebraic scheme (\cite[I.6.9.12]{GD}). When, in addition,  $\mathbf{X}$ is  locally Noetherian, corollary \ref{cor.Buan-Krause} also applies to
 $\mathcal{G}=\text{Qcoh}(\mathbf{X})$. \item Each quotient category
 $\mathcal{G}/\mathcal{T}$, where $\mathcal{G}$ is a locally
 finitely presented Grothendieck category and $\mathcal{T}$ is a
 hereditary torsion class in $\mathcal{G}$ generated by finitely
 presented objects (\cite[Proposition 2.4]{ES}).
 \end{enumerate}
 \end{exems}

We end the paper with the following question:

\begin{ques}
Let $\mathcal{G}$ be a locally finitely presented Grothendieck
category
 and $Q$ be a 1-cotilting
object. Is $\mathcal{F}=\text{Cogen}(Q)$ a generating class of
$\mathcal{G}$?.

\end{ques}

\end{proof}


\begin{thebibliography}{80}

\bibitem[AJS]{AJS} L. Alonso, A. Jerem\'ias, M. Saor\'in, \emph{Compactly generated t-structures on the derived category of a Noetherian
ring}.  J. Algebra \textbf{324}(4) (2010), 313-346

\bibitem[AJSo]{AJSo} L. Alonso, A. Jerem\'ias, M.J. Souto, \emph{Construction of t-structures and equivalences of derived
categories}. Trans. AMS \textbf{355} (2003), 2523-2543.

\bibitem[AR]{AR} J. Ad\'amek, J. Rosick\'y, \emph{Locally presentable and accessible categories}. London
Mathematical Society Lecture Note Series, \textbf{189}, Cambridge
University Press (1994).



\bibitem[B]{B} S. Bazzoni, \emph{Cotilting modules are
pure-injective}. Proceed. AMS \textbf{131}(12), 3665-3672.

\bibitem[BBD]{BBD} A. Beilinson, J. Bernstein, P. Deligne, ``Faisceaux Pervers''.
\emph{Analysis and topology on  singulas spaces,} I, Luminy 1981.
Ast\`{e}risque \textbf{100},  Soc. Math. France, Paris. (1982),
5-171.

\bibitem[Br]{Br} T. Bridgeland, \emph{Stability conditions on triangulated
categories}. Annals of Math. \textbf{166} (2007), 317-345.

\bibitem[BK]{BK} A.B. Buan, H. Krause, \emph{Cotilting modules over tame hereditary
algebras}. Pacific J. Math \textbf{211}(1) (2003), 41-59.

\bibitem[C]{C} R. Colpi, \emph{Tilting in Grothendieck categories}.
Forum Math. \textbf{11} (1999), 735-759.


\bibitem[CG]{CG} R. Colpi, E. Gregorio, \emph{The Heart of cotilting theory pair is a Grothendieck category}. Preprint.


\bibitem[CGM]{CGM} R. Colpi, E. Gregorio, F. Mantese, \emph{On the Heart of a faithful torsion theory}. J.  Algebra
 \textbf{307} (2007), 841-863.


\bibitem[CMT]{CMT} R. Colpi, F. Mantese, A. Tonolo, \emph{When the heart of a faithful torsion pair is a module category}.
J.  Pure and Appl. Algebra \textbf{215} (2011) 2923-2936.

\bibitem[CT]{CT} R. Colpi, J. Trlifaj, \emph{Tilting modules and tilting torsion
pairs}. J. Algebra \textbf{178}(2) (1995), 614-634.

\bibitem[CB]{CB} W. Crawley-Boevey, \emph{Locally finitely presented additive
categories}. Comm. Algebra \textbf{22}(5) (1994), 1641-1674.

\bibitem[ES]{ES} S. Estrada, M. Saor\'in, \emph{Locally finitely presented categories with no flat
objects}. Forum Math. DOI:10.1515/forum-2012-0054 (2012).


\bibitem[G]{G} P. Gabriel, \emph{Des cat\'egories abeliennes}, Bull.
Soc. Math. France \textbf{90} (1962), 323-448.

\bibitem[GT]{GT} R. G\"obel, J. Trlifaj, \emph{Approximations and endomorphism algebras of
modules}. De Gruyter Expos. Maths \textbf{41} (2006).

\bibitem[GKR]{GKR} A.L. Gorodentsev, S.A. Kuleshov, A.N. Rudakov, \emph{t-stabilities and t-structures on triangulated
categories}. Izvest. RAN Ser. Math. \textbf{68}(4) (2004), 117-150.

\bibitem[Gr]{Gr} A. Grothendieck, \emph{Sur quelques points d'Alg\`ebre
Homologique}. Tohoku Math. J. \textbf{9}(2) (1957), 119-221.

\bibitem[GD]{GD} A. Grothendieck, J.A. Dieudonn\'e, \emph{Elements de G\'eometrie Alg\'ebrique
I}. Grundlehren Math. Wiss \textbf{166}, 1971.

\bibitem[HRS]{HRS} D. Happel, I. Reiten, S.O. Small$\emptyset$,  \emph{Tilting in abelian categories and quasitilted
algebras}. Memoirs AMS \textbf{120} (1996).

\bibitem[Ke2]{Ke2} B. Keller, \emph{Introduction to abelian and derived
categories}, in 'Representations of reductive groups', edts W.
Carter and M. Geck. Cambridge Univ. Press (1998), 41-62.

\bibitem[KN]{KN} B. Keller, P. Nicol\'as, \emph{Weight structures and simple dg modules for positive dg
algebras}. Int.. Math. Res. Notes \textbf{5} (2013), 1028-1078.

\bibitem[ML]{ML} S. Mac Lane, \emph{Homology}, 3rd edition. Springer-Verlag (1975).  

\bibitem[M]{M} F. Mattiello, \emph{On the heart associated to a faithful torsion pair}.  Available on \newline
 www.algant.eu/documents/theses/mattiello.pdf, (2011).

\bibitem[MT]{MT} F. Mantese, A. Tonolo, \emph{On the heart associated with a torsion pair}. Topology and Appl.,
\textbf{159} (2012), 2483-2489.

\bibitem[N]{N} A. Neeman, \emph{Triangulated categories}, Ann. Math. Studies. Princeton University Press,
\textbf{148} (2001).

\bibitem[P]{P} B. Pareigis, \emph{Categories and functors}.  Academic Press (1970).

\bibitem[PS]{PS} C. Parra, M. Saor\'in, \emph{Hearts of t-structures which are Grothendieck categories}. In preparation.

\bibitem[Po]{Po} N. Popescu, \emph{Abelian categories with applications to rings and
modules}. London Math. Soc. Monogr \textbf{3},  Academic Press
(1973).

\bibitem[Pr]{Pr} M. Prest, \emph{Definable additive categories: purity and model
theory}. Memoirs AMS \textbf{987} (2011).

\bibitem[Roo]{Roo} J.E. Roos, \emph{Derived functors of inverse limits
revisited}. J. London Math. Soc. \textbf{73}(2)  (2006), 65-83.

\bibitem[SS]{SS} M.J. Souto, S. Trepode, \emph{t-structures on the bounded derived category of the Kronecker
algebra}. Appl. Categ. Struct. \textbf{20}(5), 513-529.

\bibitem[S]{S} B. Stenstr\"om, \emph{Rings of quotients}.
Grundlehren der math. Wissensch., \textbf{217}, Springer-Verlag,
(1975).

\bibitem[V]{V} L. Verdier, \emph{Des cat\'egories d\'eriv\'ees des cat\'egories
ab\'eliennes}. Asterisque \textbf{239}, Soc. Math.  France (1996).


\end{thebibliography}
\end{document}